\newcommand{\kk}{{\mathsf{k}}}
\newcommand{\PP}{{\mathbb{P}}}
\newcommand{\ZZ}{{\mathbb{Z}}}
\newcommand{\CA}{{\mathcal{A}}}
\newcommand{\CB}{{\mathcal{B}}}
\newcommand{\CC}{{\mathcal{C}}}
\newcommand{\CE}{{\mathcal{E}}}
\newcommand{\CF}{{\mathcal{F}}}
\newcommand{\CG}{{\mathcal{G}}}
\newcommand{\CH}{{\mathcal{H}}}
\newcommand{\CO}{{\mathcal{O}}}
\newcommand{\CT}{{\mathcal{T}}}
\newcommand{\CU}{{\mathcal{U}}}
\newcommand{\CV}{{\mathcal{V}}}
\newcommand{\CW}{{\mathcal{W}}}
\newcommand{\Hom}{\mathop{\mathsf{Hom}}\nolimits}
\newcommand{\Ext}{\mathop{\mathsf{Ext}}\nolimits}
\newcommand{\Tor}{\mathop{\mathsf{Tor}}\nolimits}
\newcommand{\Cone}{\mathop{\mathsf{Cone}}\nolimits}
\newcommand{\Gr}{{\mathsf{Gr}}}
\newcommand{\OGr}{{\mathsf{OGr}}}
\newcommand{\IGr}{{\mathsf{IGr}}}
\newcommand{\LGr}{{\mathsf{LGr}}}
\newcommand{\GL}{{\mathsf{GL}}}
\newcommand{\SP}{{\mathsf{Sp}}}
\theoremstyle{plain}
\newtheorem{theorem}{Theorem}[section]
\newtheorem{conjecture}[theorem]{Conjecture}
\newtheorem{lemma}[theorem]{Lemma}
\newtheorem{proposition}[theorem]{Proposition}
\theoremstyle{definition}
\newtheorem{definition}[theorem]{Definition}
\theoremstyle{remark}
\newtheorem{remark}[theorem]{Remark}
\newtheorem{example}[theorem]{Example}
\newcommand{\You}{\mathrm{Y}}
\newcommand{\YD}{\mathrm{YD}}
\newcommand{\IFl}{{\mathsf{IFl}}}
\newcommand{\SpS}[2]{#1^{\langle #2 \rangle}}
\newcommand{\PerpQ}[1]{{#1}^\perp\!/{#1}}
\newcommand{\ptl}{\tilde{p}}
\newcommand{\qtl}{\tilde{q}}
\newcommand{\lvee}[1]{\vphantom{#1}^\vee\! {#1}}
\newcommand{\bfG}{\mathbf{G}}
\newcommand{\bfP}{\mathbf{P}}
\title{Full exceptional collections on Lagrangian Grassmannians}
\author{Anton Fonarev}
\address{\sloppy
\parbox{0.95\textwidth}{
National Research University Higher School of Economics,
Usacheva str., 6, Moscow 119048 Russia\\[.5em]
Algebraic Geometry Section, Steklov Mathematical Institute of Russian Academy of Sciences,
8~Gubkin str., Moscow 119991 Russia
\hfill
}\bigskip}
\email{avfonarev@mi-ras.ru}
\date{}
\dedicatory{To my wife Stephanie with love}
\thanks{This work was partially supported by the RSF grant 18-11-00141.}
\begin{document}

\begin{abstract}
  We show fullness of the exceptional collections of maximal length
  constructed by A.~Kuznetsov and A.~Polishchuk in the bounded
  derived categories of coherent sheaves on Lagrangian Grassmannians.
\end{abstract}

\maketitle

\section{Introduction}

Full exceptional collections proved themselves to be an incredibly useful
tool for studying derived categories of algebraic varieties.
The pioneering result in this area belongs to Beilinson,
who showed in~\cite{Bei} that the line bundles $\langle \CO, \CO(1), \ldots, \CO(n)\rangle$
form a~full exceptional collection in the bounded derived category
$D^b(\PP^n)$ of coherent sheaves on $\PP^n$.
It was later shown by Kapranov in~\cite{Kap} that the bounded derived categories of
Grassmannians, complete and partial flag varieties, and quadrics
admit full exceptional collections, consisting of equivariant vector bundles.
Since then, the following conjecture remains essentially open.

\begin{conjecture}\label{conj}
  Let $\bfG$ be a semisimple algebraic group over an algebraically closed
  field of characteristic zero, and let $\bfP\subset \bfG$
  be a parabolic subgroup. Then the bounded derived category of coherent sheaves on
  $\bfG/\bfP$ admits a full exceptional collection.
\end{conjecture}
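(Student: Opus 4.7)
The statement is a longstanding open conjecture, so I cannot hope for a uniform proof; the realistic plan is a reduction to a case-by-case verification along the classification of simple algebraic groups and their maximal parabolics, and within each case to write down an explicit candidate collection and verify exceptionality and fullness separately.

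First I would reduce to the case where $\bfP$ is maximal. If $\bfP \subset \bfQ \subset \bfG$ is a chain of parabolics, then the projection $\bfG/\bfP \to \bfG/\bfQ$ is a homogeneous bundle whose fiber is a partial flag variety of smaller semisimple rank, and a standard relative version of the Beilinson--Kapranov theorem shows that a full exceptional collection on the total space can be assembled from full exceptional collections on the base and on the fiber. Iterating, the conjecture reduces to the case of generalized Grassmannians $\bfG/\bfP$ associated with a single simple root.

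Next, for each such $\bfG/\bfP$ I would use the candidate collections of Kuznetsov--Polishchuk, which are organized as a rectangular Lefschetz decomposition with respect to the minimal ample generator $\CO(1)$: blocks of equivariant bundles $\CB_0 \supset \CB_1 \supset \cdots \supset \CB_{i-1}$ whose twists $\CB_j(j)$ assemble into an exceptional collection of the expected length equal to the number of Schubert cells. Exceptionality is a combinatorial check via Borel--Bott--Weil applied to $\RHom$ between the equivariant summands: each $\Ext$-group is computed by cohomology of an equivariant bundle on the full flag variety $\bfG/\bfB$, and one only has to verify vanishing for a finite list of weights --- a mechanical task once the blocks have been guessed, and already carried out by Kuznetsov and Polishchuk in many cases.

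The serious obstacle is \emph{fullness}. Since the length of the Kuznetsov--Polishchuk collection equals the rank of the Grothendieck group, by the standard criterion it suffices to show that the collection generates $D^b(\bfG/\bfP)$ as a triangulated subcategory, and the only effective route I am aware of is to construct, for each equivariant bundle not already in the collection, an explicit resolution by bundles from the collection, in the spirit of the staircase complexes that handle the ordinary and orthogonal Grassmannian cases. Such complexes require geometric input tailored to the specific $\bfG/\bfP$ --- for $\LGr(n,2n)$ one expects to exploit the incidence $\IGr(k,2n) \to \LGr(n,2n)$ and an induction on $n$ via restriction from a larger isotropic flag variety --- and the main difficulty will be engineering a compatible chain of such resolutions that exhausts every missing equivariant bundle; the absence of a uniform combinatorial mechanism behind these complexes is precisely why the conjecture remains open in general.
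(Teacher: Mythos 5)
There is no proof here, and there could not be one: the statement you were given is Conjecture~\ref{conj}, which the paper explicitly leaves open and does not claim to prove. What the paper actually establishes is Theorem~\ref{thm:meta}, the special case of Lagrangian Grassmannians $\LGr(n,2n)$, by giving two geometric descriptions of the Kuznetsov--Polishchuk objects and then constructing Lagrangian staircase complexes to prove fullness. Your text is a survey of the known strategy (reduce to maximal parabolics, take the Kuznetsov--Polishchuk blocks, check exceptionality by Borel--Bott--Weil, then somehow prove fullness), and you candidly flag that the last step is the real problem --- but flagging it is not supplying it. The crucial ingredient, ``construct, for each equivariant bundle not already in the collection, an explicit resolution by bundles from the collection,'' is precisely what is unknown in general and is exactly the content this paper provides only for $\LGr(n,2n)$; without it your argument proves nothing beyond the cases already in the literature.

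A few of your supporting assertions are also shakier than you present them. The Kuznetsov--Polishchuk collections of maximal length exist (so far) only for maximal parabolics in types $B$, $C$, $D$ (plus the classical type $A$ results), not for arbitrary generalized Grassmannians, and they are not in general organized as a rectangular Lefschetz decomposition --- in~\eqref{eq:kp-coll} the blocks $\CC_i$ vary with $i$. Moreover, the step ``length equals the rank of $K_0$, so it suffices to show generation'' is vacuous as stated (fullness \emph{is} generation), and the stronger statement that any exceptional collection of maximal length on $\bfG/\bfP$ is automatically full is itself only expected, not known; the paper is careful to avoid relying on it and instead shows directly that the subcategory generated by the collection contains $\CO$ and is stable under twisting by $\CO(1)$. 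So the verdict is: the conjecture remains open, your proposal is a reasonable description of the program but contains no new argument, and the gap between ``one would need staircase-like resolutions for every $\bfG/\bfP$'' and actually producing them is the entire difficulty.
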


The latter conjecture can be formulated in a stronger form. Namely, one expects to be able
to construct a full exceptional collection consisting of equivariant vector bundles.

It is not hard to reduce the general question to the case when $\bfG$ is a simple
algebraic group, and $\bfP$ is a maximal parabolic subgroup.
We refer the reader to Introduction in~\cite{KP} for a reasonably recent list
of known results in this direction. Apart from a finite number of examples,
full exceptional collections were constructed only in the bounded derived categories
of quadrics (as mentioned above) and orthogonal and isotropic Grassmannians of
planes $\IGr(2, 2n)$ and $\OGr(2, 2n+1)$ by Kuznetsov in~\cite{Kuz}.

Substantial progress was made in~\cite{KP}, where Kuznetsov and Polishchuk managed to construct exceptional
collections of maximal length (which is always equal to the rank of the Grothendieck group)
whenever $\bfG$ is a simple algebraic group of type $B$, $C$, or $D$, and $\bfP$
is maximal. The method they used is quite curious. First, they observe that the equivariant
derived category admits an infinite full exceptional collection; namely, one can simply take
all the irreducible equivariant vector bundles.
Next, they suggest a representation-theoretic criterion under which the dual (in the equivariant
category) to a~finite subcollection of such bundles is exceptional in the non-equivariant
derived category. The~authors call such a subcollection an \emph{exceptional block}. Finally,
they do a case-by-case study showing how to~choose exceptional blocks (several for each variety)
so that the objects coming from different blocks satisfy semiorthogonality conditions,
and their number equals the rank of the Grothendieck group.

Since one expects that in the case of rational homogeneous varieties (more generally,
in any triangulated category generated by a full exceptional collection)
any exceptional collection of maximal length is full, a natural approach to
Conjecture~\ref{conj}
is to show fullness of the exceptional collections of Kuznetsov and Polishchuk.
Unfortunately, the task is not that easy: the exceptional objects are constructed in quite
an abstract way (it is not even clear whether the collections consist of coherent sheaves),
while no general method of showing fullness of a given exceptional collection is known
(the resolution of diagonal method, invented by Beilinson and used by Kapranov, does not
easily apply for general isotropic and orthogonal Grassmannians).

The purpose of the present work is to give an explicit geometric description of the exceptional
objects of Kuznetsov and Polishchuk in the case of Lagrangian Grassmannians $\LGr(n, 2n)$
and to show that the corresponding collections are full. In order to do the latter,
we construct a certain class of exact complexes, which we call
\emph{Lagrangian staircase complexes}. Staircase complexes appeared in~\cite{Fon-LD},
and were used to construct certain Lefschetz decompositions of the derived categories
of the usual Grassmannians $\Gr(k, n)$. Their generalization later appeared in~\cite{Fon-KP},
where the construction of Kuznetsov and Polishchuk was studied in type $A$.

One way to show that a given exceptional collection in the bounded derived category of
a smooth projective variety $X$ is full is to show that the subcategory generated by this
collection contains $\CO_X$ and is stable under the twist by an ample line bundle $\CO_X(1)$.
This is where staircase complexes turn out to be very useful.

The main result of the paper is the following theorem.

\begin{theorem}\label{thm:meta}
  Let $V$ be a $2n$-dimensional symplectic vector space over an algebraically closed field
  of characteristic zero.
  Then the bounded derived category $D^b(\LGr(n, V))$ of coherent sheaves on $\LGr(n, V)$
  admits a full exceptional collection consisting of equivariant vector bundles.
\end{theorem}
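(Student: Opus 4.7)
The plan is to follow the strategy stated near the end of the introduction: letting $\CT \subset D^b(\LGr(n,V))$ denote the triangulated subcategory generated by the Kuznetsov--Polishchuk collection, it suffices to show that $\CO_{\LGr(n,V)} \in \CT$ and that $\CT$ is stable under tensoring by the Plücker line bundle $\CO(1) = \det \CU^\vee$, where $\CU \subset V \otimes \CO$ is the tautological subbundle. Since $\CO(1)$ is ample, its twists generate $D^b(\LGr(n,V))$, and this reduction already tells us where all the effort must go: the first point is essentially automatic, as $\CO_{\LGr(n,V)}$ is the initial bundle of the collection, and the entire weight of the argument therefore falls on stability under $\otimes\CO(1)$.

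The first task will be to give an explicit representation-theoretic description of the Kuznetsov--Polishchuk exceptional bundles on $\LGr(n,V)$. Although \cite{KP} defines these objects abstractly as duals of equivariant irreducibles in the equivariant category, on the Lagrangian Grassmannian I expect them to be expressible as Schur functors $\Sigma^\lambda \CU^\vee$ indexed by Young diagrams $\lambda$ ranging over an explicit combinatorial region governed by the block decomposition of \cite{KP} and the structure of $\SP_{2n}$-dominant weights. Once this identification is in place, tensoring by $\CO(1)$ amounts to appending a column of height $n$ to $\lambda$, so the fullness question reduces to a purely combinatorial problem: produce resolutions expressing these ``overflow'' diagrams as iterated extensions of diagrams inside the allowed region.

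The core of the argument will be the construction of \emph{Lagrangian staircase complexes}: $\SP_{2n}$-equivariant exact complexes of equivariant bundles on $\LGr(n,V)$ whose outer term is an overflow bundle $\Sigma^\lambda \CU^\vee \otimes \CO(1)$ and whose remaining terms lie among the exceptional bundles of the collection. My plan is to produce them by working on auxiliary isotropic partial flag varieties $\IFl(k,n;V)$ with their natural projections to $\LGr(n,V)$, pushing forward relative Koszul-type resolutions of structure sheaves of suitable isotropic incidence subvarieties, and identifying the resulting terms via relative Borel--Bott--Weil. The model is the staircase construction of \cite{Fon-LD} and its refinement in \cite{Fon-KP} for type A, but the symplectic constraint introduces genuine new difficulty: the relevant incidence loci fail to be complete intersections of the naive expected shape, so the Koszul complex must be replaced by a longer and subtler resolution, and even its terms are not described by a single pushforward of an irreducible. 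Establishing exactness of these complexes and matching each term to a specific bundle of the collection is the step I expect to be the main obstacle, and most of the paper will likely be devoted to it.

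Given the Lagrangian staircase complexes, the conclusion is routine: one orders Young diagrams by any monotone statistic (for instance the number of boxes lying outside the allowed region), and uses each complex to express $E \otimes \CO(1)$, for $E$ in the collection, as an iterated extension of bundles that either belong to $\CT$ by construction or belong to $\CT$ by the inductive hypothesis. Combined with $\CO_{\LGr(n,V)} \in \CT$ and ampleness of $\CO(1)$, this yields $\CT = D^b(\LGr(n,V))$ and completes the proof.
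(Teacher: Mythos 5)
Your reduction (show $\CO\in\CT$ and $\CT(1)\subseteq\CT$, then invoke generation by twists of $\CO$) is indeed the strategy of the paper, but two of your central assumptions fail, and together they leave the core of the argument missing. First, the Kuznetsov--Polishchuk objects on $\LGr(n,V)$ are \emph{not} Schur functors $\Sigma^\lambda\CU^*$. For instance $S^2\CU^*$ is not even exceptional: $S^2\CU\otimes S^2\CU^*$ contains the summand $\Sigma^{(2,0,\ldots,0,-2)}\CU^*$, whose first cohomology on $\LGr(n,V)$ is $\SpS{V}{(2)}\neq 0$ by Borel--Bott--Weil, so $\Ext^1(S^2\CU^*,S^2\CU^*)\neq 0$, whereas for $n\geq 3$ the object $\CE^{(2)}$ does occur in the collection. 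The objects $\CE^\lambda$ are nontrivial iterated extensions of the irreducibles $\Sigma^\mu\CU^*$ with $\mu\subseteq\lambda$, and a substantial part of the paper (Propositions~\ref{prop:fl-reg-sp}, \ref{prop:fl-irreg-sp} and~\ref{prop:flgl}) is devoted to identifying them geometrically: up to duality they are $p_*q^*$ of symplectic Schur functors $\SpS{(\PerpQ{\CW})}{\lambda}$ (with a determinant twist when $\lambda_h>0$) pulled back from isotropic flag varieties, equivalently $p_*q^*\CG^\lambda$ for the dual exceptional objects $\CG^\lambda$ on $\IGr(w,V)$. Consequently your reduction ``tensoring with $\CO(1)$ appends a column, so fullness is a purely combinatorial problem about Schur-functor resolutions'' is not available: the type~A picture does not transfer verbatim.

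Second, the existence of the Lagrangian staircase complexes is precisely the step you defer, and the mechanism you propose (pushing forward Koszul-type resolutions of isotropic incidence subvarieties) is neither carried out nor what makes the proof work. In the paper the complex of Proposition~\ref{prop:stc} is obtained by taking $\det\CW^*\otimes\SpS{(\PerpQ{\CW})}{\bar{\lambda}}$ on $\IGr(w,V)$, projecting it to the admissible subcategory $\CA=\langle\Sigma^\mu\CW^*\mid\mu\in\You_{w,h}\rangle$, proving that the complementary component is annihilated by $p_*q^*$ (Lemmas~\ref{lm:perp} and~\ref{lm:perp-pr}), identifying the graded pieces of the projection by the Borel--Bott--Weil computation of Proposition~\ref{prop:main}, and using that all $\CF^\mu$ are sheaves so that the filtration collapses into an exact complex; the identification of the terms as (twists of) the $\CE^{\lambda^{(i)}}$ depends on the second description $\CF^\lambda\simeq p_*q^*\CG^\lambda$. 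Finally, your closing induction is under-specified: the paper's induction (on the number of rows of maximal width) necessarily passes through the extra objects $\CE^\lambda$ with $h(\lambda)+w(\lambda)=n+1$, which are not members of the collection but must be shown to lie in $\CT$ with suitable twists. As written, your proposal records the correct goal and overall shape, but the substantive content --- the actual description of the exceptional objects and the construction and exactness of the staircase complexes --- is missing.
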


Theorem~\ref{thm:meta} was previously shown to hold for $\LGr(3, 6)$ by Samokhin, see~\cite{Sam},
and for $\LGr(4, 8)$ by Polishchuk and Samokhin, see~\cite{Sam-Pol}. The latter paper also
dealt with the derived category of $\LGr(5, 10)$, see Remark~\ref{rm:pol-sam}.

The paper is organized as follows. In Section~2 we collect some preliminaries.
In Section~3 we give two different geometric descriptions of the Kuznetsov--Polishchuk
objects on Lagrangian Grassmannians. Both descriptions are essential in the construction
of Lagrangian staircase complexes, which is done in Section~4. The proof of the main
theorem concludes the latter. In Section~5 we use staircase complexes to construct
a minimal Lefschetz exceptional collection in the derived category of $D^b(\LGr(5, 10))$.
We tried to be kind to the reader and pulled all the Borel--Bott--Weil computations in Appendix~A.

\subsection*{Acknowledgements}
The existence of Lagrangian staircase complexes was predicted many years ago
by A.~Kuznetsov. I am grateful to him for his mathematical generosity and
genuine interest in the present work.

\section{Preliminaries}

We work over a fixed algebraically closed field $\kk$ of characteristic zero.

\subsection{Semiorthogonal decompositions and exceptional collections}\label{ssec:exc}

We will freely use the notions
of an exceptional collection and semiorthogonality decomposition. For convenience,
we remind the reader of the following facts.

Let $\CT$ be a $\kk$-linear triangulated category.

\begin{definition}
A full subcategory $\CA\subseteq\CT$ is called \emph{admissible}
if the inclusion functor has both a left and a right adjoint.
\end{definition}

With every admissible subcategory $\CA\subseteq\CT$ one can associate
two semiorthogonal decompositions:
\begin{equation*}
  \CT = \langle \CA^\perp, \CA\rangle, \quad \text{where} \quad
  \CA^\perp = \langle X\in \CT\mid \Hom(\CA, X)=0\rangle,
\end{equation*}
and
\begin{equation*}
  \CT = \langle \CA, \vphantom{\CA}^\perp\!\CA\rangle, \quad \text{where} \quad
  \vphantom{\CA}^\perp\!\CA = \langle Y\in \CT\mid \Hom(Y,\CA)=0\rangle.
\end{equation*}
Recall that every full triangulated subcategory
generated by a full exceptional collection is admissible if the ambient category
is triangulated.

Let $\CT=\langle \CA_1, \CA_2,\ldots, \CA_t\rangle$ be
a semiorthogonal decomposition.
Then for every object $X\in \CT$ there exists a functorial filtration
\begin{equation*}
  0=X_t\to X_{t-1}\to \cdots\to X_0=X,
\end{equation*}
such that for every $i=1,\ldots, t$ the cone $Y_i$ of the corresponding morphism
\begin{equation*}
  X_{i}\to X_{i-1}\to Y_i\to X_{i}[1]
\end{equation*}
belongs to $\CA_i$.

Let $\CT$ be a $\kk$-linear triangulated category,
and let $\langle E_1, E_2,\ldots, E_t\rangle$ be an exceptional collection.
Then one can construct two more exceptional collections in $\CT$; namely, the left and right
dual exceptional collections $\langle E^\vee_1, E^\vee_2,\ldots, E^\vee_t\rangle$ and
$\langle \lvee{E}_1, \lvee{E}_2,\ldots, \lvee{E}_t\rangle$. The dual collections can
characterized by the following properties. First,
$E^\vee_i\in \langle E_1, E_2,\ldots, E_i\rangle$ and
$\lvee{E}_i\in \langle E_{t-i+1}, E_{t-i+2},\ldots, E_t\rangle$ for all $i=1,\ldots,n$.
Next,
\begin{equation*}
  \Hom^\bullet (E_i, E^\vee_j) = \begin{cases}
    \kk & \text{if } i+j = n+1, \\
    0 & \text{otherwise,}
  \end{cases}
  \quad\text{and}\quad
  \Hom^\bullet (\lvee{E}_i, E_j) = \begin{cases}
    \kk & \text{if } i+j = n+1, \\
    0 & \text{otherwise.}
  \end{cases}
\end{equation*}
It follows from the definition that the left (resp. right) dual collection of
the right (resp. left) dual collection is isomorphic to the original collection.

\begin{remark}\label{rm:dual}
Note that we did not specify the cohomological degrees in which the nontrivial
morphisms between objects of the collection and its duals collections live.
Moreover, various conventions appear in the literature.
Different choices lead to exceptional collections whose objects only differ by
shifts in the~triangulated category. Since, the associated semiorthogonal
decompositions are identical, there is some freedom in the choice.
On all occasions we choose the degrees so that the dual collection
of interest consists of vector bundles.
\end{remark}

\subsection{Weights and diagrams}
Let $k$ be a positive integer.
Denote by $\You_k\subset \ZZ^k$ the set of weakly decreasing
sequences
$\You_k=\left\{(\lambda_1, \lambda_2, \ldots, \lambda_k)\in\ZZ^k \mid
\lambda_1\geq \lambda_2\geq \cdots\geq \lambda_k\right\}$. We sometimes refer to
elements of $\You_k$ as to \emph{weights} since the set $\You_k$ can be
naturally identified with the set of dominant weights of the group $\GL_k$.

By a \emph{Young diagram} we mean a weight with nonnegative terms.
The set of Young diagrams is denoted by
$\YD_k=\left\{(\lambda_1, \lambda_2, \ldots, \lambda_k)\in\ZZ^k \mid
\lambda_1\geq \lambda_2\geq \cdots\geq \lambda_k\geq 0\right\} \subset \You_k$.
Given a Young diagram $\lambda$, we denote
by $|\lambda|=\lambda_1+\lambda_2+\cdots+\lambda_k$ its size, and
by $\lambda^T$ its transpose: $\lambda^T\in \YD_{\lambda_1}$,
and ${\lambda^T}_i = \max \{1\leq j \leq k\mid \lambda_j\geq i\}$.
If $\lambda\in \YD_k$ is a Young diagram, we can naturally treat
it as an element in $\YD_l$ for any $l\geq k$, just by extending
the corresponding sequence with zeros.

There is a natural inclusion partial order on $\You_k$; namely,
\begin{equation*}
  \lambda\subseteq\mu \quad \Leftrightarrow \quad \lambda_i\leq\mu_i \text{ for all } i=1,\ldots,k.
\end{equation*}
When applied to Young diagrams, $\lambda\subseteq \mu$ just means that the
diagram $\lambda$ fits into the diagram $\mu$.

In the following we will work with some specific subsets of weights.
For a given pair of non-negative integers $h$ and $w$,
let $\You_{h,w}\subset \YD_h$ denote the set of those Young diagrams
whose width is at most $w$:
\begin{equation*}
  \You_{h,w}=\left\{(\lambda_1, \lambda_2, \ldots, \lambda_h)\in\ZZ^h \mid
w\geq \lambda_1\geq \lambda_2\geq \cdots\geq \lambda_h\geq 0\right\}.
\end{equation*}
It is easy to see that transposition provides a bijection between
$\You_{h,w}$ and $\You_{w, h}$. The set $\You_{h,w}$ is naturally in bijection
with the set of binary sequences of length $h+w$ containing
exactly $h$ zeros. We describe the map from the latter set to the former.
Let $\bar{a} = a_1a_2\cdots a_{h+w}\in \{0,1\}^{h+w}$ be such a sequence, and
let $1\leq l_1<l_2<\ldots<l_h\leq h+w$ be all the indices for which
$a_{l_j}=0$. Then with $\bar{a}$ we associate the diagram
\begin{equation*}
  (l_k-k,\ l_{k-1}-(k-1),\ \ldots, l_2-2,\ l_1-1)\in \You_{h,w}.
\end{equation*}

Last but not least, we will use a couple of group actions on the set $\You_h$.
The first one is the involution on the set $\You_h$, which sends $\lambda\in\You_h$
to
\begin{equation*}
  -\lambda = (-\lambda_h, \ldots, -\lambda_2, -\lambda_1).
\end{equation*}
The second one is the action of the group $\ZZ$ given by
\begin{equation}\label{eq:tw}
  \lambda(t) = (\lambda_1+t, \lambda_2+t, \ldots, \lambda_1+t).
\end{equation}
The two actions combined induce an action of the group $\ZZ\rtimes \ZZ/(2)$.

\subsection{Equivariant vector bundles}

As we have already mentioned, the set $\You_k$ can be naturally identified
with the set of dominant weights of the group $\GL_k$.
Given a rank $k$ vector bundle $\CU$ on a scheme $X$ and a weight $\lambda\in\You_k$,
we denote by
$\Sigma^\lambda\CU$ the vector bundle associated with the irreducible
$\GL_k$ representation of highest weight $\lambda$ and the principal $\GL_k$-bundle
associated with $\CU$.

If $\lambda$ is a Young diagram,
$\Sigma^\lambda$ is the usual Schur functor. In particular, if the number of
non-zero rows in~$\lambda$ is greater than $k$, then $\Sigma^\lambda\CU=0$.
Our convention is that, $\Sigma^{(i, 0, \ldots, 0)}\CU \simeq S^i\CU$,
and $\Sigma^{(1,\ldots, 1, 0, \ldots, 0)}\CU \simeq \Lambda^t\CU$,
where $t$ is the number of nonzero rows in the corresponding diagram.
We will often use the standard isomorphisms
\begin{equation*}
  \Sigma^{-\lambda}\CU\simeq \Sigma^\lambda\CU^*
  \quad\text{and}\quad
  \Sigma^{\lambda(t)}\CU\simeq \Sigma^\lambda\CU\otimes (\det \CU)^{\otimes t}.
\end{equation*}

Given a pair of weights $\lambda,\mu\in\You_k$, the tensor product
$\Sigma^\lambda\CU\otimes\Sigma^\mu\CU$ can be decomposed
into a direct sum of bundles of the form $\Sigma^\nu\CU$ (the irreducible
summands), using the Littlewood--Richardson rule: there is an isomorphism
of vector bundles
\begin{equation}\label{eq:lrr}
  \Sigma^\lambda\CU\otimes\Sigma^\mu\CU \simeq \bigoplus \left(\Sigma^\nu\CU\right)^{\oplus c^\nu_{\lambda,\mu}},
\end{equation}
where the numbers $c^\nu_{\lambda,\mu}$ are called the Littlewood--Richardson
coefficients.

We refer the reader to the wonderful book~\cite{Ful} for details.
We will need the following two easy statements, which follow immediately
from this rule.

\begin{lemma}\label{lm:prod}
  Let $\CU$ be a rank $h$ vector bundle on a scheme $X$, and let $\lambda,\mu\in\YD_h$
  be two Young diagrams.
  Then for every irreducible summand
  $\Sigma^\nu\CU\subseteq \Sigma^\lambda\CU\otimes\Sigma^\mu\CU^*$ one has
  \begin{equation*}
    -\mu\subseteq \nu\subseteq \lambda.
  \end{equation*}
\end{lemma}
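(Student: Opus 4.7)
The plan is to reduce the statement to one about irreducible $\GL_h$-representations. Namely, since $\CU$ is locally trivial and the formation of Schur functors is functorial in the underlying rank-$h$ bundle, it suffices to show: if $V$ is an $h$-dimensional vector space and $\Sigma^\nu V$ is a direct summand of the $\GL_h$-representation $\Sigma^\lambda V \otimes \Sigma^\mu V^*$, then $-\mu \subseteq \nu \subseteq \lambda$ in $\You_h$.

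For the upper bound $\nu \subseteq \lambda$, I would use the Schur--Weyl embedding $\Sigma^\mu V^* \hookrightarrow (V^*)^{\otimes |\mu|}$ to realize $\Sigma^\lambda V \otimes \Sigma^\mu V^*$ as a subrepresentation of $\Sigma^\lambda V \otimes (V^*)^{\otimes |\mu|}$. The Pieri rule applied to $V^*$ gives $\Sigma^\alpha V \otimes V^* \simeq \bigoplus_i \Sigma^{\alpha - e_i} V$, summed over those indices $i$ for which $\alpha - e_i$ remains a dominant weight. Iterating this rule $|\mu|$ times shows that every irreducible summand of $\Sigma^\lambda V \otimes (V^*)^{\otimes |\mu|}$ has highest weight of the form $\lambda - \sum_k e_{i_k}$, hence is contained in $\lambda$ componentwise; this forces $\nu \subseteq \lambda$.

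For the lower bound $-\mu \subseteq \nu$, I would argue by duality. The representation $\Sigma^\mu V \otimes \Sigma^\lambda V^*$ is the contragredient of $\Sigma^\lambda V \otimes \Sigma^\mu V^*$, so a summand $\Sigma^\nu V$ of the latter corresponds to a summand $\Sigma^\nu V^* \simeq \Sigma^{-\nu} V$ of the former. Applying the upper bound with the roles of $\lambda$ and $\mu$ interchanged yields $-\nu \subseteq \mu$, which unravels (using $(-\nu)_i = -\nu_{h+1-i}$) to $\nu_j \geq -\mu_{h+1-j} = (-\mu)_j$ for all $j$, i.e., $-\mu \subseteq \nu$.

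The argument is essentially formal, and I do not expect any genuine obstacle; the one point requiring care is the bookkeeping around the involution $\lambda \leftrightarrow -\lambda$ and the identification $\Sigma^{-\lambda} \CU \simeq \Sigma^\lambda \CU^*$, so that the two inclusions produced in the duality step are oriented correctly with respect to the partial order on $\You_h$.
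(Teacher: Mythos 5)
Your argument is correct. Note, however, that the paper does not actually spell out a proof: it simply asserts that the lemma ``follows immediately'' from the Littlewood--Richardson rule, meaning the decomposition~\eqref{eq:lrr} applied to $\Sigma^\lambda\CU\otimes\Sigma^{-\mu}\CU$ (using $\Sigma^\mu\CU^*\simeq\Sigma^{-\mu}\CU$), where the componentwise bounds $-\mu\subseteq\nu\subseteq\lambda$ are standard properties of the rational (mixed-tensor) Littlewood--Richardson coefficients. Your route is genuinely different and more elementary: after the (legitimate) reduction to $\GL_h$-representations, you avoid the mixed-weight LR rule altogether by embedding $\Sigma^\mu V^*$ as a summand of $(V^*)^{\otimes|\mu|}$ and iterating the Pieri-type rule $\Sigma^\alpha V\otimes V^*\simeq\bigoplus_i\Sigma^{\alpha-e_i}V$ (valid for arbitrary dominant $\alpha$, which you implicitly use since intermediate weights may have negative entries), then obtain the lower bound by contragredient duality and the involution $\nu\mapsto-\nu$, whose bookkeeping you handle correctly. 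What this buys is self-containedness --- only the Pieri rule for the minuscule representation $V^*$ and semisimplicity are needed --- at the mild cost of working inside a larger representation (summand containment rather than the exact decomposition with multiplicities that the LR rule provides); for the purposes of this lemma, which only needs the support of the decomposition, the two approaches are equally effective.
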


\begin{lemma}\label{lm:pos-sum}
  Let $\CU$ be a rank $h$ vector bundle on a scheme $X$, and let $\lambda,\mu\in\YD_h$
  be two Young diagrams.
  If there is an irreducible summand
  $\Sigma^\nu\CU\subseteq \Sigma^\lambda\CU\otimes\Sigma^\mu\CU^*$ such that
  $\nu$ is a Young diagram (that is, $\nu_h\geq 0$), then $\mu\subseteq \lambda$.
\end{lemma}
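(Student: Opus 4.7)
The plan is to reduce the statement to a standard fact about Littlewood--Richardson coefficients. The key observation is that an irreducible summand $\Sigma^\nu\CU$ of $\Sigma^\lambda\CU \otimes \Sigma^\mu\CU^*$ corresponds, at the level of the associated $\GL_h$-representations, to a nonzero multiplicity of the irreducible $\GL_h$-representation of highest weight $\nu$ inside $\Sigma^\lambda \otimes (\Sigma^\mu)^*$. Since $\mu$ and $\nu$ are Young diagrams, by the usual tensor--Hom adjunction this multiplicity equals
\begin{equation*}
  \dim\Hom_{\GL_h}\!\left(\Sigma^\nu \otimes \Sigma^\mu,\ \Sigma^\lambda\right),
\end{equation*}
that is, the multiplicity of $\Sigma^\lambda$ inside $\Sigma^\nu \otimes \Sigma^\mu$. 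Because all three of $\nu$, $\mu$, $\lambda$ are honest Young diagrams, this multiplicity is precisely the Littlewood--Richardson coefficient $c^\lambda_{\nu,\mu}$ of~\eqref{eq:lrr}.

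Next I would invoke the standard combinatorial fact that $c^\lambda_{\nu,\mu}\neq 0$ forces both $\nu\subseteq\lambda$ and $\mu\subseteq\lambda$. Indeed, the Littlewood--Richardson rule presents $c^\lambda_{\nu,\mu}$ as the number of Littlewood--Richardson tableaux of skew shape $\lambda/\nu$ and content $\mu$, so the existence of even one such tableau requires the skew shape $\lambda/\nu$ to be well defined, giving $\nu\subseteq\lambda$; applying the symmetry $c^\lambda_{\nu,\mu}=c^\lambda_{\mu,\nu}$ then yields the desired containment $\mu\subseteq\lambda$.

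There is no substantial obstacle: the entire argument is a one-step translation into a standard property of Littlewood--Richardson coefficients, already covered by the reference~\cite{Ful} cited just above the lemma. The only thing worth checking carefully is that the identification of multiplicities in $\Sigma^\lambda\CU\otimes\Sigma^\mu\CU^*$ with Littlewood--Richardson numbers is legitimate when $\CU^*$ enters the tensor product, but this is immediate from the identity $\Sigma^\mu\CU^*\simeq\Sigma^\mu(\CU^*)$ applied representation-theoretically and the tensor--Hom adjunction above.
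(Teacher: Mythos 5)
Your argument is correct and matches the paper's intent: the paper offers no written proof, simply asserting that the lemma ``follows immediately'' from the Littlewood--Richardson rule, and your reduction via tensor--Hom adjunction to the nonvanishing of $c^\lambda_{\nu,\mu}$ (hence $\nu\subseteq\lambda$ and, by symmetry, $\mu\subseteq\lambda$) is exactly the standard way to make that assertion precise. The identification of bundle summands with representation-theoretic multiplicities is legitimate in characteristic zero by semisimplicity, so there is no gap.
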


Recall that to a pair of Young diagrams $\mu\subseteq\lambda$ one can associate
the so-called skew Schur functor $\Sigma^{\lambda/\mu}$, which satisfies the property
\[
  \Sigma^{\lambda/\mu}\CU \simeq \bigoplus \left(\Sigma^\nu\CU\right)^{\oplus c^\lambda_{\nu,\mu}},
\]
where $c^\lambda_{\nu,\mu}$ are Littlewood--Richardson coefficients appearing in~\eqref{eq:lrr}.
Skew Schur functors are particularly useful to us because of the following result.

\begin{lemma}[{\cite[Proposition~2.3.1]{Wey}}]\label{lm:sch-ses}
  Let $\lambda$ be a Young diagram, and let
  \[
    0\to \CU\to \CF\to \CG\to 0
  \]
  be a short exact sequence of vector bundles on a scheme $X$.
  The exists a filtration on $\Sigma^\lambda\CF$ with the associated graded
  isomorphic to
  \[
    \bigoplus_{\mu\subseteq \lambda} \Sigma^\mu \CU\otimes\Sigma^{\lambda/\mu}\CG.
  \]
  Similarly, there exists a filtration on $\Sigma^\lambda\CF$ with the associated
  graded isomorphic to
  \[
    \bigoplus_{\mu\subseteq \lambda} \Sigma^{\lambda/\mu}\CU\otimes\Sigma^\mu\CG.
  \]
\end{lemma}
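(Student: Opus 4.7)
The plan is to realize $\Sigma^\lambda\CF$ as a direct summand of the tensor power $\CF^{\otimes n}$ (with $n=|\lambda|$) via Schur--Weyl duality, to equip $\CF^{\otimes n}$ with a natural $S_n$-equivariant filtration induced by the inclusion $\CU\hookrightarrow\CF$, and then to push this filtration through the Schur projector.

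To build the filtration, for each subset $I\subseteq\{1,\ldots,n\}$ let $T_I\subseteq\CF^{\otimes n}$ denote the subbundle obtained by placing $\CU$ in the tensor positions indexed by $I$ and $\CF$ elsewhere, and set $F^k=\sum_{|I|=k}T_I$. This produces a decreasing $S_n$-stable filtration
\[
\CF^{\otimes n}=F^0\supseteq F^1\supseteq\cdots\supseteq F^n\supseteq F^{n+1}=0,
\]
and, because $\CU\hookrightarrow\CF$ splits locally on $X$, one checks directly that
\[
F^k/F^{k+1}\;\simeq\;\bigoplus_{|I|=k}\CU^{\otimes I}\otimes\CG^{\otimes I^c}\;\simeq\;\mathrm{Ind}_{S_k\times S_{n-k}}^{S_n}\!\bigl(\CU^{\otimes k}\boxtimes\CG^{\otimes(n-k)}\bigr)
\]
as $S_n$-equivariant vector bundles on $X$.

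Schur--Weyl duality identifies $\Sigma^\lambda\CF\simeq\Hom_{S_n}(V_\lambda,\CF^{\otimes n})$, with $V_\lambda$ the Specht module associated to $\lambda$. Because $\kk S_n$ is semisimple, the functor $\Hom_{S_n}(V_\lambda,-)$ is exact, so applying it to $F^\bullet$ yields a filtration on $\Sigma^\lambda\CF$ whose $k$-th associated graded is, by Frobenius reciprocity,
\[
\Hom_{S_k\times S_{n-k}}\!\bigl(\mathrm{Res}^{S_n}_{S_k\times S_{n-k}}V_\lambda,\;\CU^{\otimes k}\boxtimes\CG^{\otimes(n-k)}\bigr).
\]
The branching rule for symmetric groups decomposes the restriction as $\mathrm{Res}^{S_n}_{S_k\times S_{n-k}}V_\lambda\simeq\bigoplus_{|\mu|=k,\,|\nu|=n-k}(V_\mu\boxtimes V_\nu)^{\oplus c^\lambda_{\mu\nu}}$ with Littlewood--Richardson multiplicities. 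Plugging this in and applying Schur--Weyl once more to each of $\CU^{\otimes k}$ and $\CG^{\otimes(n-k)}$, the $k$-th graded piece becomes $\bigoplus_{|\mu|=k,\,\nu}c^\lambda_{\mu\nu}\,\Sigma^\mu\CU\otimes\Sigma^\nu\CG$, which by the very definition of the skew Schur functor equals $\bigoplus_{\mu\subseteq\lambda,\,|\mu|=k}\Sigma^\mu\CU\otimes\Sigma^{\lambda/\mu}\CG$. This establishes the first filtration.

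The second filtration follows by the analogous construction with the roles of sub- and quotient bundle swapped, i.e.\ filtering $\CF^{\otimes n}$ by $\CG$-content instead of $\CU$-content; equivalently, the Littlewood--Richardson symmetry $c^\lambda_{\mu\nu}=c^\lambda_{\nu\mu}$ combined with reindexing $k\mapsto n-k$ rewrites the graded pieces of $F^\bullet$ into the desired form. The main obstacle is purely organizational: one must verify that $F^k/F^{k+1}$ carries genuinely the induced $S_n$-representation structure (rather than merely an isomorphic one obtained after forgetting the action), so that Frobenius reciprocity downstream delivers the Littlewood--Richardson coefficients exactly. Once this is secured, the rest of the argument is a chain of standard identifications.
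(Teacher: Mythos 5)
Your argument is essentially correct, but it is worth noting that the paper does not prove this lemma at all: it is quoted verbatim from Weyman (Proposition~2.3.1), where it is established by a characteristic-free construction of explicit natural filtrations (via standard tableaux/Cauchy-type arguments) valid over an arbitrary base scheme. Your route is different: you realize $\Sigma^\lambda\CF$ as $\Hom_{S_n}(V_\lambda,\CF^{\otimes n})$ and push the $\CU$-content filtration of $\CF^{\otimes n}$ through the exact functor $\Hom_{S_n}(V_\lambda,-)$, using Frobenius reciprocity and the branching rule. This works, and the worry you flag is harmless: the identification $F^k/F^{k+1}\simeq\bigoplus_{|I|=k}\CU^{\otimes I}\otimes\CG^{\otimes I^c}$ is induced by the canonical maps applying $\CF\to\CG$ in the positions outside $I$, which are visibly compatible with the permutation of the index sets $I$, so the quotient carries the genuine induced $S_n$-structure (and local splitness of $\CU\hookrightarrow\CF$ shows $F^k$ is a subbundle and the map is an isomorphism). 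Note also that in characteristic zero the two asserted associated gradeds coincide as direct sums by the symmetry $c^\lambda_{\mu\nu}=c^\lambda_{\nu\mu}$, so a single filtration already yields both statements. What your approach gives up is generality: it needs $\kk S_n$ semisimple, i.e.\ a base containing $\QQ$, whereas Weyman's filtrations exist over any scheme and in any characteristic. Since the paper works throughout over an algebraically closed field of characteristic zero and defines $\Sigma^\lambda$ via irreducible $\GL_k$-representations, your proof fully suffices for every use of the lemma in the paper, but it does not prove the lemma in the generality in which it is stated (an arbitrary scheme $X$).
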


\subsection{Isotropic Grassmannians and symplectic Schur functors}
By a symplectic vector bundle on a scheme $X$ we mean a locally free sheaf $\CV$
together with a section $\CO_X\to \Lambda^2\CV^*$ such that the associated
morphism $\CV\to \CV^*$ is a skew-symmetric isomorphism.

Recall that the set of dominant weights
of the group $\SP_{2n}$ is naturally identified with the set $\YD_n$.
Given a $2n$-dimensional symplectic vector space $V$ (resp. symplectic vector
bundle $\CV$), we denote by $\SpS{V}{\lambda}$ (resp. $\SpS{\CV}{\lambda}$)
the result of the application of the corresponding symplectic Schur functor;
that is, the vector bundle associated with the highest weight $\lambda$ and
the principal $\SP_{2n}$-bundle associated with $\CV$.
Recall that $\SpS{V}{\lambda}$ (resp. $\SpS{\CV}{\lambda}$) is a quotient
of $\Sigma^\lambda V$ (resp. $\Sigma^\lambda \CV$).

Let $V$ be a $2n$-dimensional symplectic vector space. We denote by $\IGr(k, V)$
the isotropic Grassmannian, which parametrizes $k$-dimensional isotropic
subspaces. When $k=n$, we get the Lagrangian Grassmannian $\LGr(n, V)$.
The varieties $\IGr(k, V)$ are precisely the rational homogeneous varieties for
the group $\bfG=\SP_{2n}$ and maximal parabolic subgroups $\bfP$.
It is well known that the irreducible $\bfG$-equivariant bundles on $\bfG/\bfP$
are parametrized by the dominant weights of the Levi quotient of $\bfP$.
In our case one can easily describe them.

Denote by $\CU$ the tautological rank $k$ vector bundle on $\IGr(k, V)$,
and by $\CU^\perp$ the rank $2n-k$ subbundle given by vectors orthogonal to $\CU$
with respect to the symplectic form on $V$. The isotropic condition provides
an inclusion $\CU\subseteq \CU^\perp$.
The symplectic structure on $V$ descends to $\PerpQ{\CU}$,
and every irreducible $\bfG$-equivariant vector bundle on $\IGr(k, V)$
is of the form $\Sigma^\lambda \CU \otimes \SpS{(\PerpQ{\CU})}{\mu}$ for some
$\lambda\in Y_{k}$ and $\mu\in \YD_{n-k}$.

The isotropic Grassmannian is naturally embedded in the usual Grassmannian
$\Gr(k, V)$ as a closed subvariety, and the tautological vector bundle on $\IGr(k, V)$
is the restriction of the tautological bundle on $\Gr(k, V)$. Moreover, the restriction
of $(V/\CU)^*$ from the Grassmannian to the isotropic Grassmannian is naturally
isomorphic to $\CU^\perp$. In the case of $\LGr(n, V)$ we have $\CU\simeq \CU^\perp$.

Of course, isotropic and Lagrangian Grassmannians exist in the relative setting.
The following lemma is trivial, we include its proof for the reader's convenience.

\begin{lemma}\label{lm:sp-filtration}
  Let $\CV$ be a symplectic vector bundle of rank $2n$ on a smooth algebraic variety $X$.
  Consider the relative tautological Grassmannian together with the natural projection
  \begin{equation*}
    p: \LGr_X(n, \CV) \to X,
  \end{equation*}
  and denote by $\CU$ the relative Lagrangian subbundle on $\LGr_X(n,\CV)$.
  Let $\lambda\in \YD_n$ be a Young diagram.
  Then $p^*\SpS{\CV}{\lambda}$ admits a filtration
  $p^*\SpS{\CV}{\lambda}=\CV_N\supseteq\CV_{N-1}\supseteq\cdots\supseteq\CV_0=0$
  such that the associated quotients $\CV_i/\CV_{i-1}$ are of the form
  $\Sigma^{\mu_i}\CU$ for some $-\lambda\subseteq \mu_i\subseteq \lambda$.
\end{lemma}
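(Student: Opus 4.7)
The plan is to build the filtration on the ambient bundle $\Sigma^\lambda p^*\CV$ first and then push it down to the direct summand $p^*\SpS{\CV}{\lambda}$. The key geometric input is that on $\LGr_X(n,\CV)$ the Lagrangian subbundle $\CU$ coincides with its own symplectic orthogonal, so the symplectic form identifies $p^*\CV/\CU \simeq \CU^*$; hence there is a short exact sequence $0 \to \CU \to p^*\CV \to \CU^* \to 0$ on $\LGr_X(n,\CV)$.

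First, I would apply Lemma~\ref{lm:sch-ses} to this sequence, obtaining a filtration on $\Sigma^\lambda p^*\CV$ with associated graded $\bigoplus_{\mu \subseteq \lambda} \Sigma^\mu \CU \otimes \Sigma^{\lambda/\mu}\CU^*$. Next, I would refine this filtration using the Littlewood--Richardson rule: since $\Sigma^{\lambda/\mu}\CU^* = \bigoplus_{\nu \subseteq \lambda} (\Sigma^\nu \CU^*)^{\oplus c^\lambda_{\mu,\nu}}$, each graded piece decomposes as a direct sum of terms $\Sigma^\mu\CU \otimes \Sigma^\nu\CU^*$ with $\mu,\nu \subseteq \lambda$. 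By Lemma~\ref{lm:prod}, every irreducible summand $\Sigma^\sigma\CU$ of such a tensor product satisfies $-\nu \subseteq \sigma \subseteq \mu$, and therefore $-\lambda \subseteq \sigma \subseteq \lambda$. Turning each direct sum of irreducibles into a flag in some arbitrary order yields a refined filtration on $\Sigma^\lambda p^*\CV$ whose graded pieces are individual bundles $\Sigma^{\sigma_i}\CU$ with $-\lambda \subseteq \sigma_i \subseteq \lambda$.

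Finally, to pass to the summand $p^*\SpS{\CV}{\lambda}$, which is a direct summand of $\Sigma^\lambda p^*\CV$ by reductivity of $\SP_{2n}$ in characteristic zero, I would push the refined filtration forward through the $\SP_{2n}$-equivariant projection $\Sigma^\lambda p^*\CV \twoheadrightarrow p^*\SpS{\CV}{\lambda}$. The successive images form a chain of subsheaves whose quotients are equivariant quotients of the pieces $\Sigma^{\sigma_i}\CU$, under the natural action of the Lagrangian parabolic $P \subseteq \SP_{2n}$ on the fibers of $p$. Each bundle $\Sigma^{\sigma_i}\CU$ corresponds to an irreducible $P$-representation---the unipotent radical of $P$ acts trivially on any irreducible $\GL_n$-module---so the equivariant quotients are either zero or full, which in particular forces the subsheaves in the chain to be vector subbundles. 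Removing the trivial steps produces the asserted filtration.

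The main delicate point is the last step: one must check that pushing the filtration forward along the projection produces honest subbundles and that the graded pieces are irreducible equivariant quotients rather than mere subsheaves. This is where the semisimplicity of $P$-representation theory in characteristic zero is essential; once it is granted, the earlier steps are a routine combination of Lemmas~\ref{lm:sch-ses} and~\ref{lm:prod} with the Littlewood--Richardson rule.
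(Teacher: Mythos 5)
Your proof is correct and follows essentially the same route as the paper's: apply Lemma~\ref{lm:sch-ses} to the tautological sequence $0\to\CU\to p^*\CV\to\CU^*\to0$ (the paper does this on the ambient Grassmannian $\Gr_X(n,\CV)$ and then restricts along $\iota$, but the content is identical), refine the graded pieces via the Littlewood--Richardson rule and Lemma~\ref{lm:prod}, and transfer the filtration to the quotient $p^*\SpS{\CV}{\lambda}$ of $\Sigma^\lambda p^*\CV$. Your final paragraph---using $P$-equivariance of the projection and the irreducibility of each $\Sigma^\sigma\CU$ as a module over the Lagrangian parabolic to show the induced pieces are full or zero---simply makes explicit a step the paper treats as immediate.
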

\begin{proof}
  Recall that there is a closed embedding $\iota: \LGr_X(n, \CV)\to \Gr_X(n, \CV)$,
  where $\Gr_X(n, \CV)$ denotes the relative Grassmannian. Consider the diagram
  \begin{equation*}
    \begin{tikzcd}[column sep=small]
      \LGr_X(n, \CV) \arrow[rr, "\iota"] \arrow[rd, "p" swap] & & \Gr_X(n, \CV) \arrow[ld, "q"] \\
      & X & 
    \end{tikzcd}.
  \end{equation*}
  Without creating any confusion, denote by $\CU$ the relative tautological bundle
  on $\Gr_X(n, \CV)$.
  Recall that $\SpS{\CV}{\lambda}$ is a quotient of $\Sigma^\lambda\CV$. In particular,
  $p^*\SpS{\CV}{\lambda}$ is a quotient of $p^*\Sigma^\lambda\CV=\iota^*\Sigma^\lambda (q^*\CV)$.
  Let us apply Lemma~\ref{lm:sch-ses} to the short exact sequence
  \begin{equation*}
    0\to \CU \to q^*\CV\to q^*\CV/\CU\to 0
  \end{equation*}
  of vector bundles on $\Gr(n, \CV)$. We get a filtration with associated subquotients
  of the form $\Sigma^\mu\CU\otimes \Sigma^\nu (\CV/\CU)$ (we used the skew Schur
  functor decomposition property), where $\mu, \nu\in \YD_n$ and $\mu,\nu\subseteq \lambda$.
  Since $\iota^*(\CV/\CU)\simeq \CU^*$, the result follows from Lemma~\ref{lm:prod}.
\end{proof}

\subsection{Lagrangian exceptional blocks of Kuznetsov--Polishchuk}
We are now going to sketch the results of~\cite{KP} in the case of Lagrangian Grassmannians.
Let $V$ be a $2n$-dimensional symplectic vector space, and let $\bfG=\SP(V)$.
We are interested in the derived category of $\LGr(n, V)$. As usual, let
$\CU$ denote the tautological rank $n$ vector bundle on $\LGr(n, V)$.
Since the symplectic form induces an isomorphism $V/\CU\simeq \CU^*$,
the tautological short exact sequence on $\LGr(n, V)$ is of the form
\[
  0\to \CU \to V \to \CU^*\to 0.
\]

It was explained before that irreducible $\bfG$-equivariant vector bundles on $\LGr(n, V)$
are all of the form $\Sigma^\lambda\CU^*$, where $\lambda\in \You_n$
(the Levi quotient of the corresponding maximal parabolic subgroup is isomorphic
to $\GL_n$). Moreover, they form an infinite full exceptional collection in the
equivariant bounded derived category $D^b_{\bfG}(\LGr(n, V))$.

\begin{definition}[{\cite[Definition~3.1]{KP}}]
  A subset of weights $B\subset \You_n$ is called an \emph{exceptional block}
  if for any $\lambda,\mu\in B$ the canonical map
  \begin{equation*}\label{eq:exc-blk}
    \bigoplus_{\nu\in B} \Ext^\bullet_{\bfG}(\Sigma^\lambda\CU^*, \Sigma^\nu\CU^*)
    \otimes \Hom(\Sigma^\nu\CU^*, \Sigma^\mu\CU^*) \to
    \Ext^\bullet(\Sigma^\lambda\CU^*, \Sigma^\mu\CU^*)
  \end{equation*}
  is an isomorphism.
\end{definition}

Kuznetsov and Polishchuk made the following wonderful observation.

\begin{proposition}[{\cite[Proposition~3.9]{KP}}]\label{prop:blk}
  Given an exceptional block $B$, let $\langle \CE^\lambda \mid \lambda\in B\rangle$
  denote the right dual exceptional collection to $\langle \Sigma^\lambda\CU^* \mid \lambda\in B\rangle$
  in the equivariant derived category $D^b_{\bfG}(\LGr(n, V))$. Then
  $\langle \CE^\lambda \mid \lambda\in B\rangle$ form an exceptional collection
  in the non-equivariant derived category $D^b(\LGr(n, V))$.
\end{proposition}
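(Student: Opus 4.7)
Let $\CB \subset D^b_\bfG(\LGr(n, V))$ denote the triangulated subcategory generated by $\{\Sigma^\nu \CU^* \mid \nu \in B\}$; every $\CE^\mu$ lies in $\CB$, and in the equivariant category $\{\CE^\lambda\}$ is already exceptional, being the right dual of $\{\Sigma^\lambda \CU^*\}$.

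The plan is to bootstrap this equivariant exceptionality to the non-equivariant category via the isotypic decomposition of Ext: since $\CE^\lambda$ and $\CE^\mu$ are both $\bfG$-equivariant,
\[
\Hom^\bullet(\CE^\lambda, \CE^\mu) \;\cong\; \bigoplus_{\tau} \SpS{V}{\tau} \otimes_\kk \Hom^\bullet_\bfG(\SpS{V}{\tau} \otimes \CE^\lambda, \CE^\mu),
\]
the sum running over dominant weights $\tau$ of $\bfG = \SP(V)$. The $\tau = 0$ summand is exactly the equivariant $\Hom^\bullet_\bfG(\CE^\lambda, \CE^\mu)$, which equals $\kk$ for $\lambda = \mu$ and vanishes when $\lambda$ is strictly later than $\mu$ in the inherited exceptional ordering on $B$. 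The heart of the argument is thus to show that every $\tau \neq 0$ summand vanishes.

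For this I would apply Lemma~\ref{lm:sp-filtration} to $\SpS{V}{\tau}$ pulled back to $\LGr(n, V)$, obtaining a filtration whose subquotients are the Schur bundles $\Sigma^\rho \CU$ for $-\tau \subseteq \rho \subseteq \tau$. Crossing this with the cone filtration of $\CE^\lambda$ over the generators $\Sigma^\sigma\CU^*$ (for $\sigma \in B$) coming from the equivariant mutation construction of the right dual, and expanding each $\Sigma^\rho\CU \otimes \Sigma^\sigma\CU^*$ by the Littlewood--Richardson rule, one obtains a bi-filtration of $\SpS{V}{\tau} \otimes \CE^\lambda$ whose associated graded is a direct sum of equivariant Schur bundles $\Sigma^{\sigma'}\CU^*$. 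The Yoneda characterization of the right dual gives $\Hom^\bullet_\bfG(\Sigma^{\sigma'}\CU^*, \CE^\mu) = \kk$ when $\sigma' = \mu$ and $0$ for any other $\sigma' \in B$, while for $\sigma' \notin B$ the Hom vanishes as soon as $\sigma'$ sits to the right of $B$ in the equivariant exceptional ordering on $\You_n$ (since $\CE^\mu$ is built from the $\Sigma^\nu\CU^*$ with $\nu \in B$). The exceptional block condition is tailored precisely so that, for every $\tau \neq 0$, the $\sigma'$ produced by the bi-filtration either sit outside $B$ in this orthogonal position or pair up and cancel in the associated spectral-sequence differentials, so that the whole $\tau \neq 0$ summand drops out.

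The hard part will be this combinatorial verification: checking, using the explicit description of exceptional blocks from~\cite{KP}, that the $\sigma'$ arising with nontrivial $\tau$ never produce a surviving $\mu$-summand. Everything else --- the isotypic decomposition, the symplectic filtration, the Littlewood--Richardson expansion, and the equivariant exceptional-collection property of the right dual --- is formal. Once the vanishing of the $\tau \neq 0$ part is established, the proposition follows at once from the isotypic decomposition.
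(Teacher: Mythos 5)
First, note that the paper does not prove this statement at all: Proposition~\ref{prop:blk} is quoted from~\cite[Proposition~3.9]{KP}, so there is no in-paper proof to compare with; the argument in~\cite{KP} is a homological one that feeds the block condition (the multiplication map being an isomorphism) directly into the computation of the non-equivariant $\Ext$'s from the dual objects to the generators $\Sigma^\nu\CU^*$, and deduces exceptionality from that — it is not a weight-by-weight analysis of isotypic components.

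Your proposal has a genuine gap precisely at the point where the proposition has content. Your reduction to the vanishing of the $\tau\neq 0$ isotypic summands is fine (the decomposition and the identification of the $\tau=0$ part with $\Hom^\bullet_\bfG$ are standard), but the argument you sketch for that vanishing cannot work as described. First, a variance error: the right dual collection is characterized by $\Ext^\bullet_\bfG(\CE^\mu,\Sigma^{\sigma'}\CU^*)$ being $\kk$ for $\sigma'=\mu$ and $0$ otherwise (this is~\eqref{eq:el-up}), i.e.\ by maps \emph{out of} $\CE^\mu$; the groups $\Hom^\bullet_\bfG(\Sigma^{\sigma'}\CU^*,\CE^\mu)$ that your bi-filtration actually produces are not normalized by the duality and are in general neither $\kk$ nor $0$. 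Second, and more fundamentally, no purely positional/combinatorial bookkeeping of which weights $\sigma'$ occur in the filtration of $\SpS{V}{\tau}\otimes\CE^\lambda$ can establish the vanishing: the identical formal structure is present for pairs $(\lambda,\mu)$ in the non-semiorthogonal direction, where the $\tau\neq 0$ components of $\Ext^\bullet(\CE^\lambda,\CE^\mu)$ are typically nonzero, so any correct argument must use the exceptional block hypothesis quantitatively (as an isomorphism of multiplication maps), which your sketch never does — you only assert that the condition is "tailored" so that terms cancel. Your fallback that surviving terms "pair up and cancel in the spectral-sequence differentials" is exactly the kind of statement an associated-graded computation gives you no access to: one would have to prove the relevant differentials are isomorphisms, and that is where the block condition must enter. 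As it stands, the heart of the proposition is deferred to an unspecified verification, so the proposal does not constitute a proof.
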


Various exceptional blocks were constructed for orthogonal and isotropic Grassmannians
in~\cite{KP}. We are mainly interested in the case of Lagrangian Grassmannians.
Let $h, w\geq 0$ be integers such that $h+w\leq n+1$. It was shown in~\cite[Section~5]{KP}
that the set of weights $\You_{h, w}\subset \You_n$ forms an exceptional block.
Using the previous proposition, one can construct exceptional objects
$\CE^\lambda\in D^b(\LGr(n, V))$ for all $\lambda\in \You_n$ such that
$\lambda\in \You_{h,w}$ for some integers $h$ and $w$ such that $h+w\leq n+1$.
An attentive reader might point out that the notation for $\CE^\lambda$ does not
reflect the choice of an exceptional block. In fact, there is
no dependence on such a choice. Let us fix integers $h,w\geq 0$ such that
$h+w\leq n+1$, and let $\lambda\in \You_{h,w}$.
It easily follows from the Borel--Bott--Weil
theorem that
\begin{equation*}
  \Ext^\bullet_{\bfG}(\Sigma^\nu\CU^*, \Sigma^\mu\CU^*) \simeq
  \Ext^\bullet(\Sigma^\nu\CU^*, \Sigma^\mu\CU^*)^\bfG = 0
  \quad \text{if} \quad \mu\nsubseteq \nu.
\end{equation*}
In particular, we can order the exceptional collection
\begin{equation}\label{eq:blk-hw}
  \langle \Sigma^\mu\CU^* \mid \mu\in \You_{h,w}\rangle
\end{equation}
in $D^b_{\bfG}(\LGr(n, V))$
so that $\langle \Sigma^\mu\CU^* \mid \mu\in \You_{h,w}, \mu\subseteq \lambda\rangle$
are the rightmost objects in~\eqref{eq:blk-hw}. It now follows from our discussion
of dual exceptional collections in Section~\ref{ssec:exc} that $\CE^\lambda$ can be
(up to isomorphism) characterized by the following properties:
\begin{equation}\label{eq:el-up}
  \CE^\lambda\in \langle \Sigma^\mu\CU^* \mid 0\subseteq \mu\subseteq \lambda\rangle
  \subset D^b_{\bfG}(\LGr(n, V))\quad \text{and} \quad
  \Ext^\bullet_{\bfG}(\CE^\lambda, \Sigma^\mu\CU^*) = \begin{cases}
    \kk & \text{if } \mu = \lambda, \\
    0 & \text{if } 0\subseteq \mu \subsetneq \lambda.
  \end{cases}
\end{equation}
In particular, $\CE^\lambda$ does not depend on the choice of an exceptional block.

Given a full triangulated subcategory $\CC\subseteq D^b(\LGr(n, V))$,
we denote by $\CC(i)$ the image of $\CC$
under the autoequivalence given by $-\otimes\CO(i)$.
Recall that $\YD_k$ can be naturally considered as a subset in $\YD_n$ for all $k\leq n$.

\begin{proposition}[{\cite[Theorem~9.2]{KP}}]\label{prop:fec-kp}
  There is a semiorthogonal decomposition
  \begin{equation}\label{eq:kp-coll}
    \left\langle \CC_0, \CC_1(1), \CC_2(2), \ldots, \CC_n(n) \right\rangle \subseteq D^b(\LGr(n, V)),
    \quad\text{where}\quad
    \CC_i = \left\langle \CE^\lambda \mid \lambda\in\You_{i, n-i}  \right\rangle.
  \end{equation}
\end{proposition}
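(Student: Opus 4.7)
The task is to establish that $\langle \CC_0, \CC_1(1), \ldots, \CC_n(n)\rangle$ is a semiorthogonal collection, which requires admissibility of each block and pairwise semiorthogonality after the twists. For admissibility: each set $\You_{i, n-i}$ satisfies $i + (n-i) = n \leq n+1$ and is therefore an exceptional block in the sense of \cite[Section~5]{KP}. Applying Proposition~\ref{prop:blk} to this block shows that $\CC_i = \langle \CE^\lambda \mid \lambda \in \You_{i, n-i}\rangle$ is generated by an exceptional collection in $D^b(\LGr(n, V))$, hence is admissible, and the same holds for each twist $\CC_i(i)$.

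\textbf{Semiorthogonality: reduction to an equivariant computation.} Fix $0 \leq i < j \leq n$, $\mu \in \You_{j, n-j}$, $\lambda \in \You_{i, n-i}$, and set $k := j - i > 0$; it remains to prove $\Hom^\bullet(\CE^\mu, \CE^\lambda(-k)) = 0$. Since both arguments are $\bfG$-equivariant, this Hom space is naturally a $\bfG$-module, and for reductive $\bfG$ in characteristic zero it splits as
\[
  \Hom^\bullet(\CE^\mu, \CE^\lambda(-k)) \simeq \bigoplus_\pi \SpS{V}{\pi} \otimes \Ext^\bullet_\bfG\bigl(\CE^\mu, \CE^\lambda(-k) \otimes \SpS{V}{\pi}\bigr),
\]
with the sum over $\pi \in \YD_n$ (using self-duality of symplectic irreducibles). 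It therefore suffices to prove vanishing of each equivariant Ext on the right. Lemma~\ref{lm:sp-filtration} filters $\SpS{V}{\pi}$ on $\LGr(n, V)$ by subquotients of the form $\Sigma^\beta \CU$ for $-\pi \subseteq \beta \subseteq \pi$, while \eqref{eq:el-up} places $\CE^\lambda$ in $\langle \Sigma^\nu \CU^* \mid 0 \subseteq \nu \subseteq \lambda\rangle$. Combining the two d\'evissages together with $\Sigma^\beta \CU \simeq \Sigma^{-\beta} \CU^*$, the problem is reduced to
\[
  \Ext^\bullet_\bfG(\CE^\mu, \Sigma^\gamma \CU^*) = 0
\]
for every irreducible summand $\Sigma^\gamma \CU^* \subseteq \Sigma^{\nu(-k)} \CU^* \otimes \Sigma^{-\beta} \CU^*$ with $\nu \subseteq \lambda$ and $-\pi \subseteq \beta \subseteq \pi$.

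\textbf{The main obstacle.} What remains is a combinatorial verification. Combining the characterizing vanishings~\eqref{eq:el-up} with Borel--Bott--Weil computations of $\Ext^\bullet_\bfG(\Sigma^\rho \CU^*, \Sigma^\gamma \CU^*)$ for $\rho \subseteq \mu$, one must rule out every $\gamma$ arising from the Littlewood--Richardson decomposition in the previous step. The real work is to control these $\gamma$ under the joint constraints $\nu \in \You_{i, n-i}$, $\mu \in \You_{j, n-j}$, the negative twist $-k = i - j$, and $-\pi \subseteq \beta \subseteq \pi$: these are precisely what force the ``last row'' of any relevant $\gamma$ to be either too large or too small for $\gamma$ to satisfy the containment conditions required for nonvanishing. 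This explicit case analysis, together with the Borel--Bott--Weil vanishings needed when $\gamma \notin \YD_n$, is the technical heart of the proof, and the hypothesis $h + w \leq n + 1$ (here specialized to $h + w = n$) is exactly what makes the combinatorics work out.
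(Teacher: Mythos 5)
There is a genuine gap, and in fact the key reduction you propose cannot work as stated. First, note that the paper does not prove this proposition at all: it is imported verbatim from \cite[Theorem~9.2]{KP}, so the entire content of the statement is the semiorthogonality you defer to a ``combinatorial verification'' in your last paragraph. Your proposal stops exactly where the theorem begins: no vanishing is actually established for any pair $i<j$, only a claim that the constraints ``force'' it.

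Worse, the dévissage you set up reduces $\Ext^\bullet_\bfG(\CE^\mu,\SpS{V}{\pi}\otimes\CE^\lambda(-k))=0$ to the term-by-term vanishing $\Ext^\bullet_\bfG(\CE^\mu,\Sigma^\gamma\CU^*)=0$ for every Littlewood--Richardson summand $\gamma$ of $\Sigma^{\nu(-k)}\CU^*\otimes\Sigma^{-\beta}\CU^*$, and this stronger statement is simply false. Already for $n=1$, i.e.\ $\LGr(1,2)=\PP^1$, with $i=0$, $j=1$, $\mu=\lambda=0$, $\CE^\mu=\CE^\lambda=\CO$, $k=1$ and $\pi=(1)$, the filtration of $V\otimes\CO$ has subquotients $\CU=\CO(-1)$ and $\CU^*=\CO(1)$, so after tensoring with $\CE^\lambda(-1)=\CO(-1)$ the relevant summands are $\Sigma^\gamma\CU^*=\CO$ and $\CO(-2)$; here $\Ext^0_\bfG(\CO,\CO)=\kk$ and $\Ext^1_\bfG(\CO,\CO(-2))=\kk$ are both nonzero, and the total $\Ext^\bullet_\bfG(\CO,V\otimes\CO(-1))=\bigl(V\otimes H^\bullet(\PP^1,\CO(-1))\bigr)^\bfG=0$ only because these contributions cancel. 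So the vanishing you need holds only after cancellations in the spectral sequence of the filtration, which your setup (relying solely on~\eqref{eq:el-up} and Borel--Bott--Weil for the associated graded pieces) cannot detect. A correct proof has to be organized differently --- for instance as in \cite{KP}, where semiorthogonality between the twisted blocks is verified via their criteria comparing distinct exceptional blocks and explicit equivariant cohomology vanishing for the generating bundles $\Sigma^\lambda\CU^*$, not by filtering $\SpS{V}{\pi}$ against the dual objects.
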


\begin{remark}
  In the previous proposition we did not specify how exceptional objects
  are ordered within each block of the semiorthogonal decomposition.
  One can pick any total ordering of $\CE^\lambda$ in
  $\left\langle \CE^\lambda \mid \lambda\in\You_{h, w}  \right\rangle$
  refining the partial order $\subseteq$ on $\You_{h, w}$.
\end{remark}

\begin{remark}
  Proposition~\ref{prop:fec-kp} only deals with blocks of the form $\You_{h, w}$,
  where $h+w=n$, while we defined exceptional objects $\CE^\lambda$ for
  $\lambda\in\You_{h,w}$ with $h+w=n+1$ as well.
  These extra objects will appear in the proof of fullness of the
  exceptional collection given by~\eqref{eq:kp-coll}.
\end{remark}

It will be more convenient for us to work with objects dual to $\CE^\lambda$,
which we denote by $\CF^\lambda$:
\[
  \CF^\lambda = (\CE^\lambda)^*.
\]
Since duality is an anti-autoequivalence,
right dual exceptional collections become left dual, and conditions~\eqref{eq:el-up}
translate to the following characterization of $\CF^\lambda$:
\begin{equation}\label{eq:fl-up}
  \CF^\lambda\in \langle \Sigma^\mu\CU \mid 0\subseteq \mu\subseteq \lambda\rangle
  \subset D^b_{\bfG}(\LGr(n, V))\quad \text{and} \quad
  \Ext^\bullet_{\bfG}(\Sigma^\mu\CU, \CF^\lambda) = \begin{cases}
    \kk & \text{if } \mu = \lambda, \\
    0 & \text{if } 0\subseteq \mu \subsetneq \lambda.
  \end{cases}
\end{equation}

\begin{remark}
  One can say that a subset of weights $B\subset \You_n$ is a \emph{left exceptional block}
  if it satisfies the condition
  \begin{equation*}
    \bigoplus_{\nu\in B} \Hom(\Sigma^\lambda\CU^*, \Sigma^\nu\CU^*)\otimes
    \Ext^\bullet_{\bfG}(\Sigma^\nu\CU^*, \Sigma^\mu\CU^*)
    \to
    \Ext^\bullet(\Sigma^\lambda\CU^*, \Sigma^\mu\CU^*).
  \end{equation*}
  The same argument which was used in~\cite{KP} to prove Proposition~\ref{prop:blk} shows
  that given a left exceptional block $B$, the \emph{left} dual exceptional collection
  to $\langle \Sigma^\mu\CU^* \mid \mu\in \You_{h,w}\rangle$ in $D^b_{\bfG}(\LGr(n, V))$
  forms an exceptional collection in $D^b(\LGr(n, V))$.
  Since duality translates to negation of weights, we conclude that
  $B$ is a left exceptional block if and only if the set of weights
  $-B=\left\{-\lambda\ |\ \lambda\in B\right\}$ is an exceptional block.
\end{remark}

\section{Exceptional objects}
Let us fix positive integers $h$ and $w$ such that $n\geq h,w\geq 1$ and $h+w=n+1$.
Our goal is to give two descriptions of the objects $\CF^\lambda$ for
$\lambda\in\You_{h, w}$. The first description is more geometric. It~expresses
these objects as pushforwards of some equivariant irreducible vector bundles
on partial flag varieties. The~second one is slightly less trivial: it relates
$\CF^\lambda$ to certain exceptional objects on isotropic Grassmannians parametrizing
subspaces of smaller dimension.

\subsection{First description}
We have to deal with two separate cases. We begin with the case when $\lambda_h=0$.

Consider the partial flag variety $\IFl(w, n; V)$ together with the two projection maps.
\begin{equation}\label{eq:cd-pq}
\begin{tikzcd}
  & \IFl(w, n; V) \arrow[ld, "p" swap] \arrow[rd, "q"] & \\
  \LGr(n, V) & & \IGr(w, V).
\end{tikzcd}
\end{equation}
Denote by $\CU$ and $\CW$ the tautological bundles on $\LGr(n, V)$ and on $\IGr(w, V)$ respectively
as well as their pullbacks on $\IFl(w, n; V)$. Remark that $\PerpQ{\CW}$ is a symplectic
vector bundle of rank $2(n-w)=2(h-1)$. The projection $p$ realizes $\IFl(w, n; V)$
as the relative Grassmannian $\Gr(w, \CU)$, while the projection $q$ realizes $\IFl(w, n; V)$
as the relative Lagrangian Grassmannian $\LGr(h-1, \PerpQ{\CW})$.
Remark that under our assumptions the diagram $\lambda$ has at most $h-1$
nonzero rows, and thus the bundle $\SpS{\left(\PerpQ{\CW}\right)}{\lambda}$ is nonzero.

\begin{proposition}\label{prop:fl-reg-sp}
  The exceptional object $\CF^\lambda$ is isomorphic to $p_*q^*\SpS{\left(\PerpQ{\CW}\right)}{\lambda}$.
\end{proposition}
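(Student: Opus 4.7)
The plan is to verify that the object $\CX := p_* q^* \SpS{(\PerpQ{\CW})}{\lambda}$ satisfies the two defining conditions~\eqref{eq:fl-up} of $\CF^\lambda$; by the uniqueness property of the left dual exceptional collection, this will force $\CX \simeq \CF^\lambda$. Concretely, I need to show: (a) $\CX \in \langle \Sigma^\mu\CU \mid 0 \subseteq \mu \subseteq \lambda\rangle \subset D^b_\bfG(\LGr(n,V))$, and (b) $\Ext^\bullet_\bfG(\Sigma^\mu\CU, \CX)$ equals $\kk$ for $\mu = \lambda$ and vanishes for Young diagrams $0 \subseteq \mu \subsetneq \lambda$.

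For (b), first use adjunction together with $p^* \CU_{\LGr(n,V)} \simeq \CU_{\IFl(w,n;V)}$ to rewrite the Ext as
\[
\Ext^\bullet_\bfG(\Sigma^\mu\CU, \CX) \simeq H^\bullet_\bfG\bigl(\IFl(w,n;V),\, (\Sigma^\mu\CU)^* \otimes q^* \SpS{(\PerpQ{\CW})}{\lambda}\bigr).
\]
Next, apply Lemma~\ref{lm:sch-ses} to the dualized tautological sequence $0 \to (\CU/\CW)^* \to \CU^* \to \CW^* \to 0$ on $\IFl(w,n;V)$: this filters $(\Sigma^\mu\CU)^*$ with subquotients of the form $\Sigma^\alpha(\CU/\CW)^* \otimes \Sigma^{\mu/\alpha}\CW^*$ for $\alpha \subseteq \mu$. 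Since $q$ realizes $\IFl(w,n;V)$ as the relative Lagrangian Grassmannian $\LGr_{\IGr(w,V)}(h-1, \PerpQ{\CW})$ and $\CW$ is pulled back from $\IGr(w,V)$, the projection formula together with relative symplectic Borel--Bott--Weil along $q$ reduces the computation to equivariant cohomology on $\IGr(w,V)$ of products involving $\SpS{(\PerpQ{\CW})}{\lambda}$. The resulting symplectic BBW computations are carried out in Appendix~A and yield exactly $\kk$ when $\mu = \lambda$ and $0$ for $0 \subseteq \mu \subsetneq \lambda$.

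For (a), apply Lemma~\ref{lm:sp-filtration} to the fibration $q$: since $\lambda \in \YD_{h-1}$ and $\CU/\CW$ is the relative Lagrangian subbundle of $\PerpQ{\CW}$ of rank $h-1$, the pullback $q^* \SpS{(\PerpQ{\CW})}{\lambda}$ admits a filtration whose associated graded is a direct sum of bundles of the form $\Sigma^{\mu_i}(\CU/\CW)$ with $-\lambda \subseteq \mu_i \subseteq \lambda$. Since $p$ realizes $\IFl(w,n;V)$ as the relative Grassmannian $\Gr_{\LGr(n,V)}(w, \CU)$, each such subquotient pushes forward via classical relative Borel--Bott--Weil: the input $\GL_n$-weight is the concatenation $(\mu_i, 0^w)$, and $Rp_* \Sigma^{\mu_i}(\CU/\CW)$ is either zero or an irreducible equivariant bundle $\Sigma^{\nu_i}\CU$, with a cohomological shift given by the length of the shuffle needed to sort $(\mu_i, 0^w) + \rho$ into strictly decreasing order. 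The main obstacle is the bookkeeping: one must verify that every nonzero contribution has $\nu_i$ a Young diagram with $0 \subseteq \nu_i \subseteq \lambda$, so that $\CX$ in fact lands in $\langle \Sigma^\mu\CU \mid 0 \subseteq \mu \subseteq \lambda\rangle$. This case analysis relies on the constraint $\lambda \in \You_{h-1, w}$ and dovetails with the BBW bookkeeping required for (b); both collections of computations are organized in Appendix~A.
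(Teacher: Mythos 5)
Your proposal is correct and follows essentially the same route as the paper: you verify the characterization~\eqref{eq:fl-up} of $\CF^\lambda$, using Lemma~\ref{lm:sp-filtration} and pushforward along $p$ for the containment, and the filtration of $\Sigma^\mu\CU^*$ from Lemma~\ref{lm:sch-ses} plus pushforward along $q$ and taking $\bfG$-invariants for the Ext computation. The bookkeeping you defer is exactly what Lemmas~\ref{lm:gr-0}, \ref{lm:lgr-0}, \ref{lm:igr-0} and~\ref{lm:igr-eq} of Appendix~A supply; in particular, Lemma~\ref{lm:gr-0} shows that, because $(\mu_i)_{h-1}\geq -\lambda_1\geq -w$, every nonvanishing $Rp_*\Sigma^{\mu_i}(\CU/\CW)$ sits in degree $0$ with unchanged weight $\mu_i\supseteq 0$, so no shuffle/shift analysis is actually required for the containment step.
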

\begin{proof}
  Put $\kappa=(\lambda_1, \lambda_2, \ldots, \lambda_{h-1})\in \You_{h-1, w}$.
  Since both the symplectic Schur functor and $\CF^\lambda$ depend only on the Young diagram
  shape, and $\kappa$ is obtained from $\lambda$ by dropping an empty row,
  it is enough to show that $\CF^{\kappa}\simeq p_*q^*\SpS{\left(\PerpQ{\CW}\right)}{\kappa}$.
  In order to construct the required isomorphism, we will check that the bundle
  $\CF=p_*q^*\SpS{\left(\PerpQ{\CW}\right)}{\kappa}$ satisfies the dual exceptional
  collection condition~\eqref{eq:fl-up}: the object $\CF$ belongs to
  $\langle\, \Sigma^\mu\CU \mid 0\subseteq \mu\subseteq\kappa \rangle$
  in the equivariant derived category, and for any diagram $\mu\subseteq\kappa$
  \begin{equation*}
    \Hom_\bfG^\bullet(\Sigma^\mu\CU, \CF) =
    \begin{cases}
      \kk & \quad \text{if } \mu=\kappa, \\
      0   & \quad \text{otherwise.}
    \end{cases}
  \end{equation*}

  We first check the containment condition.
  By Lemma~\ref{lm:sp-filtration}, the bundle
  $\SpS{\left(\PerpQ{\CW}\right)}{\kappa}$ is an~iterated extension of equivariant vector bundles
  of the form
  $\Sigma^\nu\!\left(\CU/\CW\right)$, where $-\kappa\subseteq \nu \subseteq \kappa$.
  In particular, $\nu_h\geq -\kappa_1\geq -w$. The projection $p$ is nothing but the
  relative Grassmannian $\Gr(w, \CU)$. Using Lemma~\ref{lm:gr-0},
  we see that
  \begin{equation*}
    R^ip_*\Sigma^\nu\!\left(\CU/\CW\right) = 
    \begin{cases}
      \CU^\nu & \quad \text{if } \nu \supseteq 0 \text{ and } i = 0, \\
      0       & \quad \text{otherwise}.
    \end{cases}
  \end{equation*}
  Since the projection $p$ is an equivariant morphism,
  from the spectral sequence associated with the latter filtration we see that
  $\CF$ is an equivariant iterated extension of vector bundles of the form
  $\Sigma^\nu\CU$, where $0\subseteq \nu \subseteq \kappa=\lambda$.

  Now we compute necessary $\Hom$ groups.
  As both projections $p$ and $q$ are $\bfG$-equivariant,
  \begin{align}\label{eq:fl1}
    \Hom_\bfG^\bullet(\Sigma^\mu\CU,\,\CF)
    & \simeq \Hom_{\bfG}^\bullet(\Sigma^\mu\CU,\,p_*q^*{\SpS{(\PerpQ{\CW})}{\kappa}}) \nonumber \\ 
    & \simeq \Hom_\bfG^\bullet(\Sigma^\mu\CU,\,q^*\SpS{(\PerpQ{\CW})}{\kappa}) \nonumber \\
    & \simeq H^\bullet(\IFl(w, n; V),\,q^*\SpS{(\PerpQ{\CW})}{\kappa} \otimes \Sigma^\mu\CU^*)^\bfG \nonumber \\
    & \simeq H^\bullet(\IGr(w, V),\,\SpS{(\PerpQ{\CW})}{\kappa}\otimes q_*\Sigma^\mu\CU^*)^\bfG.
  \end{align}
  Consider the short exact sequence of vector bundles on $\IFl(w, n; V)$
  \begin{equation*}
    0\to (\CU/\CW)^*\to \CU^*\to \CW^*\to 0.
  \end{equation*} 
  By Lemma~\ref{lm:sch-ses}, there is a filtration on $\Sigma^\mu\CU^*$ with
  the associated graded of the form
  \begin{equation*}
    \bigoplus_{0\subseteq\nu\subseteq\mu} \Sigma^\nu (\CU/\CW)^* \otimes \Sigma^{\mu/\nu} \CW^*.
  \end{equation*}
  Using Lemma~\ref{lm:lgr-0} and the projection formula for the second projection $q$,
  one gets a spectral sequence whose terms are of the form
  \begin{align}\label{eq:fl2}
    & H^\bullet(\IGr(w, V),\, \SpS{(\PerpQ{\CW})}{\kappa}\otimes q_*\Sigma^\nu (\CU/\CW)^* \otimes \Sigma^{\mu/\nu} \CW^*)^\bfG \nonumber \\
    & = H^\bullet(\IGr(w, V),\, \SpS{(\PerpQ{\CW})}{\kappa}\otimes \SpS{(\PerpQ{\CW})}{\nu} \otimes \Sigma^{\mu/\nu} \CW^*)^\bfG
  \end{align}
  converging to~\eqref{eq:fl1}.
  Remark that $\SpS{(\PerpQ{\CW})}{\kappa} \otimes \Sigma^{\mu/\nu} \CW^*\otimes \SpS{(\PerpQ{\CW})}{\nu}$
  splits into a direct sum of irreducible equivariant bundles of the form
  $\SpS{(\PerpQ{\CW})}{\alpha}\otimes \Sigma^\beta\CW^*$, where
  $\SpS{(\PerpQ{\CW})}{\alpha}\subseteq \SpS{(\PerpQ{\CW})}{\kappa}\otimes \SpS{(\PerpQ{\CW})}{\nu}$,
  and $\Sigma^\beta \CW^* \subseteq \Sigma^{\mu/\nu} \CW^*$. From Lemma~\ref{lm:igr-0}
  we know that
  \begin{equation*}
    H^\bullet(\IGr(w, V),\,\SpS{(\PerpQ{\CW})}{\alpha}\otimes \Sigma^\beta\CW^*)^{\bfG} = 0
  \end{equation*}
  as soon as $\beta > 0$, while $\beta=0$ is only possible when $\nu=\mu$.
  If $\nu=\mu$, then
  \begin{align*}
    H^\bullet(\IGr(w, V),\,\SpS{(\PerpQ{\CW})}{\kappa} \otimes \Sigma^{\mu/\nu} \CW^*\otimes \SpS{(\PerpQ{\CW})}{\nu})^\bfG
    & = H^\bullet(\IGr(w, V),\,\SpS{(\PerpQ{\CW})}{\kappa} \otimes \SpS{(\PerpQ{\CW})}{\mu})^\bfG \\
    & = \Ext^\bullet_\bfG(\SpS{(\PerpQ{\CW})}{\kappa},\,\SpS{(\PerpQ{\CW})}{\mu}).
  \end{align*}
  By Lemma~\ref{lm:igr-eq}, the latter is zero unless $\mu=\kappa$, and is equal to $\kk$ as soon as $\mu=\kappa$.
\end{proof}

We now turn to the case when $\lambda_h > 0$. Assume that $w\geq 2$ and consider the partial flag
variety $\IFl(w-1, n; V)$ together with the two projection maps
\begin{equation}
\begin{tikzcd}
  & \IFl(w-1, n; V) \arrow[ld, "\tilde{q}" swap] \arrow[rd, "\tilde{p}"] & \\
  \IGr(w-1, V) & & \LGr(n, V).
\end{tikzcd}
\end{equation}
Denote by $\CU$ and $\CH$ the tautological bundles on $\LGr(n, V)$ and on $\IGr(w-1, V)$ respectively
as well as their pullbacks on $\IFl(w-1, n; V)$. Remark that $\PerpQ{\CH}$ is now a symplectic
vector bundle of rank $2h$. Recall that in~\eqref{eq:tw} we put for $\lambda\in\You_h$ and $t\in\ZZ$
\begin{equation*}
  \lambda(t) = (\lambda_1+t, \lambda_2+t, \ldots, \lambda_h+t).
\end{equation*}

The proof of the following proposition is very similar to that of Proposition~\ref{prop:fl-reg-sp}.
\begin{proposition}\label{prop:fl-irreg-sp}
  The exceptional object $\CF^\lambda$ is isomorphic to
  $\ptl_*\left(\det\,(\CU/\CH) \otimes \qtl^*\SpS{\left(\PerpQ{\CH}\right)}{\lambda(-1)}\right)$.
\end{proposition}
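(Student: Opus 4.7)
The plan is to imitate the proof of Proposition~\ref{prop:fl-reg-sp} and verify that
\[
\CF := \ptl_*\bigl(\det(\CU/\CH) \otimes \qtl^*\SpS{(\PerpQ{\CH})}{\lambda(-1)}\bigr)
\]
satisfies the defining conditions~\eqref{eq:fl-up} of $\CF^\lambda$. Three features now differ from the previous proposition and must be tracked: the Lagrangian subbundle $\CU/\CH \subset \PerpQ{\CH}$ now has rank $h$ (and $\PerpQ{\CH}$ is symplectic of rank $2h$); the map $\ptl$ realises $\IFl(w-1,n;V)$ as the relative Grassmannian $\Gr(w-1,\CU)$; and the twist by $\det(\CU/\CH)$ shifts $\GL_h$-weights by $(1^h)$, which is precisely the mechanism that brings $\lambda(-1)$ back up to $\lambda$ so as to accommodate $\lambda_h > 0$.

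For the containment $\CF \in \langle \Sigma^\alpha\CU \mid 0 \subseteq \alpha \subseteq \lambda\rangle$, I would apply Lemma~\ref{lm:sp-filtration} along $\qtl$, giving a filtration of $\qtl^*\SpS{(\PerpQ{\CH})}{\lambda(-1)}$ with associated graded of the form $\Sigma^\nu(\CU/\CH)$ for weights $-\lambda(-1) \subseteq \nu \subseteq \lambda(-1)$. Tensoring with $\det(\CU/\CH)$ shifts each summand to $\Sigma^{\nu(1)}(\CU/\CH)$ with $\nu(1) \subseteq \lambda$. Pushing forward along $\ptl$ and invoking Lemma~\ref{lm:gr-0} kills all pieces for which $\nu(1)$ is not a Young diagram, and converts the remainder into $\Sigma^{\nu(1)}\CU$ concentrated in degree zero; this places $\CF$ in the asserted semiorthogonal range.

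For the $\Hom$ characterization, I would compute $\Hom^\bullet_\bfG(\Sigma^\mu\CU,\CF)$ for $0 \subseteq \mu \subseteq \lambda$ via $\ptl^*\dashv\ptl_*$ adjunction and the projection formula for $\qtl$, reducing to
\[
H^\bullet\bigl(\IGr(w-1,V),\, \qtl_*\bigl(\Sigma^\mu\CU^* \otimes \det(\CU/\CH)\bigr) \otimes \SpS{(\PerpQ{\CH})}{\lambda(-1)}\bigr)^\bfG.
\]
Then Lemma~\ref{lm:sch-ses} applied to the dual tautological sequence $0\to(\CU/\CH)^*\to \CU^*\to \CH^*\to 0$ together with the isomorphism $\Sigma^\nu\CE^*\otimes \det\CE \simeq \Sigma^{\nu(-1)}\CE^*$ for rank $h$ bundles $\CE$ give a filtration of $\Sigma^\mu\CU^*\otimes \det(\CU/\CH)$ whose graded pieces are $\Sigma^{\nu(-1)}(\CU/\CH)^* \otimes \Sigma^{\mu/\nu}\CH^*$ for $\nu\subseteq \mu$. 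Lemma~\ref{lm:lgr-0} annihilates those with $\nu_h = 0$ under $\qtl_*$, and for $\nu_h \geq 1$ produces $\SpS{(\PerpQ{\CH})}{\nu(-1)}\otimes \Sigma^{\mu/\nu}\CH^*$.

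The resulting spectral-sequence terms $H^\bullet(\IGr(w-1,V),\, \SpS{(\PerpQ{\CH})}{\lambda(-1)} \otimes \SpS{(\PerpQ{\CH})}{\nu(-1)} \otimes \Sigma^{\mu/\nu}\CH^*)^\bfG$ are then controlled by Lemma~\ref{lm:igr-0}, which forces $\mu/\nu = 0$, hence $\nu = \mu$; Lemma~\ref{lm:igr-eq} then yields $\kk$ when $\mu = \lambda$ and zero otherwise, and the degenerate case $\mu_h = 0$ is automatically excluded by the $\nu_h\geq 1$ restriction. This matches~\eqref{eq:fl-up} and identifies $\CF$ with $\CF^\lambda$. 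The main obstacle is the weight bookkeeping caused by the twist: at each step one must verify that the non-Young weights produced by the shifts $\nu\mapsto \nu(\pm 1)$ coincide exactly with those killed by the Borel--Bott--Weil vanishing of Lemmas~\ref{lm:gr-0} and~\ref{lm:lgr-0}, so that the two vanishing mechanisms interlock cleanly and no spurious contributions survive.
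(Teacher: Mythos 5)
Your proposal is correct and follows essentially the same route as the paper: verify the two defining conditions~\eqref{eq:fl-up} for $\ptl_*\bigl(\det(\CU/\CH)\otimes\qtl^*\SpS{(\PerpQ{\CH})}{\lambda(-1)}\bigr)$, using Lemma~\ref{lm:sp-filtration} with the determinant twist and Lemma~\ref{lm:gr-0} for the containment, and adjunction, Lemma~\ref{lm:sch-ses} on $0\to(\CU/\CH)^*\to\CU^*\to\CH^*\to 0$, and Lemmas~\ref{lm:lgr-0}, \ref{lm:igr-0}, \ref{lm:igr-eq} for the $\Hom$ computation. The weight bookkeeping you flag does work out exactly as you predict (in particular the surviving pushforwards are those with $\nu_h\geq 1$, and the lower bound $\nu(1)_h\geq -(w-1)$ needed for Lemma~\ref{lm:gr-0} follows from $\nu\supseteq-\lambda(-1)$), so nothing further is missing.
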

\begin{proof}
  Put $\kappa = \lambda(-1)$ and denote
  $\CF = \ptl_*\left(\det\,(\CU/\CH) \otimes \qtl^*\SpS{\left(\PerpQ{\CH}\right)}{\lambda(-1)}\right)$.
  According to Lemma~\ref{lm:sp-filtration}, there
  is a filtration on $\SpS{\left(\PerpQ{\CH}\right)}{\kappa}$ with the associated
  quotients of the form $\Sigma^\nu\!\left(\CU/\CH\right)$, where
  $-\kappa\subseteq\nu\subseteq \kappa$. Thus, the bundle
  $\det\,(\CU/\CH) \otimes \qtl^*\SpS{\left(\PerpQ{\CH}\right)}{\kappa}$
  is an iterated extension of the corresponding bundles $\Sigma^{\nu(1)}\!\left(\CU/\CH\right)$.
  As $-\kappa(1)\subseteq \nu(1)\subseteq \kappa(1)=\lambda$,
  one has $\nu(1)_h\geq -\kappa(1)_1 > -(w-1)$. From the associated
  spectral sequence and Lemma~\ref{lm:gr-0}, we see that
  $\CF \in \langle\, \Sigma^\mu\CU \mid 0\subseteq \mu \subseteq \lambda \rangle\subseteq D^b_{\bfG}(\LGr(n, V))$.

  Both projections $\ptl$ and $\qtl$ are $\bfG$-equivariant, so
  \begin{align}\label{eq:fl3}
    \Hom_\bfG^\bullet(\Sigma^\mu\CU, \CF)
    & \simeq \Hom_\bfG^\bullet(\Sigma^\mu\CU,\,\det\,(\CU/\CH) \otimes \qtl^*\SpS{(\PerpQ{\CH})}{\kappa}) \nonumber \\
    & \simeq H^\bullet(\IFl(w-1, n; V),\,\Sigma^\mu\CU^*\otimes \det\,(\CU/\CH) \otimes \qtl^*\SpS{(\PerpQ{\CH})}{\kappa})^\bfG \nonumber \\
    & \simeq H^\bullet(\IGr(w-1, V),\,\qtl_*(\Sigma^\mu\CU^*\otimes \det\,(\CU/\CH)) \otimes \SpS{(\PerpQ{\CH})}{\kappa})^\bfG.
  \end{align}
  It follows from Lemma~\ref{lm:sch-ses} that there is a filtration on
  $\Sigma^\mu\CU^*\otimes \det\,(\CU/\CH)$ with
  the associated graded of the form
  \begin{equation*}
    \bigoplus_{0\subseteq\nu\subseteq\mu} \Sigma^{\nu(-1)}\left(\CU/\CH\right)^* \otimes \Sigma^{\mu/\nu} \CH^*.
  \end{equation*}
  From Lemma~\ref{lm:lgr-0} we know that
  \begin{equation*}
    R^i\qtl_*\Sigma^{\nu(-1)}\!\left(\CU/\CH\right)^* = 
    \begin{cases}
      \SpS{(\PerpQ{\CH})}{\nu(-1)} & \quad \text{if } \nu_1 \geq 1 \text{ and } i = 0, \\
      0       & \quad \text{otherwise}.
    \end{cases}
  \end{equation*}
  Using the projection formula,
  we get a spectral sequence with the terms of the form
  \begin{equation*}
    H^\bullet(\IGr(w-1, V),\,\Sigma^{\mu/\nu} \CH^*\otimes \SpS{(\PerpQ{\CH})}{\nu(-1)} \otimes \SpS{(\PerpQ{\CH})}{\kappa})^\bfG
  \end{equation*}
  converging to~\eqref{eq:fl3}. Proceeding exactly as in the proof of Proposition~\ref{prop:fl-reg-sp},
  we conclude that the latter is zero unless $\kappa = \mu(-1)$, and is equal to $\kk$
  as soon as $\kappa = \mu(-1)$. It remains to recall that $\kappa = \lambda(-1)$.
\end{proof}

We are left with the case $w=1$, $h=n$, and $\lambda_h\geq 1$. The only such diagram
is $\lambda=(\underbrace{1,1,\ldots,1}_{n \text{ times}})$. Remark that if
$\mu=(\underbrace{1,1,\ldots,1}_{t \text{ times}}, 0, \ldots, 0)\in \You_n$, then
$\CF^\mu\simeq\Sigma^\mu\CU=\Lambda^t\CU$. Indeed, it follows from the Littlewood--Richardson
rule and Lemma~\ref{lm:lgr-0} that the objects
$\langle\CO, \CU, \ldots, \Lambda^{n-1}\CU, \Lambda^n\CU\rangle$ form a fully orthogonal
exceptional collection in $D^b_{\bfG}(\LGr(n, V))$. Thus, in for $\mu\in \You_{n, 1}$
one has $\CF^\mu\simeq \Sigma^\mu\CU$. In particular, Proposition~\ref{prop:fl-irreg-sp}
still holds once we identify $\IFl(0, n; V)$ with $\LGr(n, V)$: $\CF^\lambda\simeq \det \CU=\Lambda^n\CU$.

\begin{remark}
  \label{rm:bun}
  It follows from the proofs of Propositions~\ref{prop:fl-reg-sp}
  and~\ref{prop:fl-irreg-sp} that not only
  $\CF^\lambda\in\left\langle \Sigma^\mu\CU \mid \mu \in \You_{h,w} \right\rangle$,
  but that the objects $\CF^\lambda$ are equivariant vector bundles
  (this fact already appeared in~\cite{KP}, but it is always nice to have
  a geometric interpretation).
\end{remark}

\subsection{Second description}

Consider the isotropic Grassmannian
$\IGr(w, V)$, and recall that we denoted by $\CW$ the tautological bundle on it.
The following lemma is trivial and known; we include its proof for the sake of completeness.

\begin{lemma}\label{lm:igr-ec}
  The bundles $\left\langle\Sigma^\mu\CW^* \mid \mu \in \You_{w, h}\right\rangle$
  with any total order refining the inclusion partial order on diagrams
  form a strong (but not full) exceptional collection in $D^b(\IGr(w, V))$.
\end{lemma}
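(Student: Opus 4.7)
The plan is to follow the standard Kapranov-style template for Borel-type exceptional collections: reduce Ext groups to sheaf cohomology on $\IGr(w,V)$, decompose via Littlewood--Richardson, and apply Borel--Bott--Weil for $\SP_{2n}$.

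First, I would rewrite
\[
  \Ext^\bullet(\Sigma^\mu\CW^*, \Sigma^\nu\CW^*) \simeq H^\bullet\bigl(\IGr(w,V),\, \Sigma^\mu\CW \otimes \Sigma^\nu\CW^*\bigr)
\]
for $\mu, \nu \in \You_{w, h}$, and decompose the right-hand side into irreducible $\GL_w$-summands $\Sigma^\alpha \CW$ by the Littlewood--Richardson rule. By Lemma~\ref{lm:prod}, the surviving $\alpha$ satisfy $-\nu \subseteq \alpha \subseteq \mu$, so that every coordinate lies in $[-h, h]$; moreover, for $\beta \in \YD_w$, the multiplicity of $\Sigma^\beta \CW^* \simeq \Sigma^{-\beta} \CW$ in this tensor product equals the Littlewood--Richardson coefficient $c^\nu_{\mu, \beta}$, which vanishes unless $\mu \subseteq \nu$.

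Next, I would invoke a Borel--Bott--Weil vanishing statement on $\IGr(w,V)$, in the spirit of the appendix lemmas used in the proofs of Propositions~\ref{prop:fl-reg-sp} and~\ref{prop:fl-irreg-sp}, showing that for every $\alpha \in \You_w$ with $|\alpha_i| \leq h$ one has $H^{>0}(\IGr(w,V), \Sigma^\alpha\CW) = 0$, and that $H^0(\IGr(w,V), \Sigma^\alpha\CW)$ is nonzero only when $\alpha_1 \leq 0$ (equivalently $\alpha = -\beta$ for some $\beta \in \YD_w$, in which case $H^0 \simeq \SpS{V}{\beta}$). Concretely, the $\SP_{2n}$-weight associated to $\Sigma^\alpha\CW$ has its last $n - w$ coordinates fixed as $(h-1, h-2, \ldots, 1)$ by the trailing block of $\rho$; using the hypothesis $w + h = n + 1$, one checks that $\alpha_1 \geq 1$ forces a collision between the leading entry $h - \alpha_1$ and one of the trailing coordinates (or produces a zero when $\alpha_1 = h$), making the weight singular, while $\alpha_1 \leq 0$ places the whole tuple in the strictly dominant chamber.

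Finally, combining these two steps: $\Ext^{>0}$ vanishes uniformly across the collection (strength); the identity $\End(\Sigma^\mu\CW^*) \simeq \kk$ arises from the unique $\beta = 0$ summand of $\Sigma^\mu\CW \otimes \Sigma^\mu\CW^*$; and for $\mu \not\subseteq \nu$, the first step produces no summand $\Sigma^{-\beta}\CW$ with $\beta \in \YD_w$ inside $\Sigma^\mu\CW \otimes \Sigma^\nu\CW^*$ (since $c^\nu_{\mu, \beta} = 0$), while the second step rules out any $H^0$-contribution from summands with $\alpha_1 \geq 1$, so $\Hom(\Sigma^\mu\CW^*, \Sigma^\nu\CW^*) = 0$. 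This is precisely the exceptional condition compatible with any total order refining the inclusion partial order on $\You_{w, h}$. The main work lies in the Borel--Bott--Weil weight analysis of the middle step; the rest is bookkeeping with Littlewood--Richardson coefficients.
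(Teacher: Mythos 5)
Your proposal is correct and follows essentially the same route as the paper: reduce $\Ext$ groups to cohomology of $\Sigma^\mu\CW\otimes\Sigma^\nu\CW^*$, bound the summands via Lemma~\ref{lm:prod}, and apply the Borel--Bott--Weil vanishing on $\IGr(w,V)$ (your inline weight analysis is exactly the content of Lemma~\ref{lm:igr-van}), with the $\mu\subseteq\nu$ condition extracted from Littlewood--Richardson positivity as in Lemma~\ref{lm:pos-sum}. The only differences are cosmetic: you re-derive the appendix vanishing lemma rather than cite it, and you phrase the $H^0$ count through the coefficients $c^\nu_{\mu,\beta}$ instead of the paper's $\nu\supseteq 0$ argument.
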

\begin{proof}
  We need to compute
  \begin{equation*}
    \Hom^\bullet(\Sigma^\mu \CW^*,\,\Sigma^\lambda \CW^*) = 
    H^\bullet(\IGr(w, V),\,\Sigma^\lambda \CW^* \otimes \Sigma^\mu \CW)
  \end{equation*}
  for a pair of diagrams $\mu, \lambda \in \You_{w, h}$.
  According to Lemma~\ref{lm:prod} the bundle $\Sigma^\lambda \CW^* \otimes \Sigma^\mu \CW$
  decomposes into a direct sum of irreducible equivariant bundles of the form
  $\Sigma^\nu\CW^*$, where $-\mu\subseteq \nu \subseteq \lambda$.
  In particular, $\nu_w \geq -\mu_1 \geq -h\geq -(2h-1)=-(2n-2w+1)$.
  It follows from Lemma~\ref{lm:igr-van} that
  \begin{equation*}
    H^i(\IGr(w, V),\, \Sigma^{\nu}\CW^*) = 
    \begin{cases}
      \SpS{V}{\nu} & \quad \text{if } \nu_w \geq 0 \text{ and } i = 0, \\
      0       & \quad \text{otherwise}.
    \end{cases}
  \end{equation*}
  Thus, $\Hom^\bullet(\Sigma^\mu \CW^*,\,\Sigma^\lambda \CW^*) = 0$ unless
  $\Sigma^\lambda \CW^* \otimes \Sigma^\mu \CW$ contains an irreducible subbundle
  $\Sigma^\nu\CU^*$ for some
  $\nu\supseteq 0$. According to Lemma~\ref{lm:pos-sum},
  the latter happens if and only if $\mu\subseteq \lambda$. If $\lambda=\mu$,
  the condition $\nu\supseteq 0$ implies $\nu=0$, and its multiplicity equals $1$;
  thus, the bundles in our collection are exceptional.
\end{proof}

Recall that $|\lambda|$ denotes the number of boxes in a Young diagram $\lambda$.

\begin{definition}\label{def:gl}
  For $\lambda \in \You_{h, w}$, define the objects $\CG^\lambda\in D^b(\IGr(w, V))$
  by the following
  property:
  \begin{equation}\label{eq:gl-def}
    \CG^\lambda \in \left\langle \Sigma^\mu \CW^* \mid \mu\in \You_{w, h}\right\rangle
    \quad\text{and}\quad
    \Hom^\bullet(\Sigma^\mu\CW^*, \CG^\lambda) =
    \begin{cases}
      \kk[-|\mu|] & \text{if } \lambda = \mu^T, \\
      0 & \text{otherwise}.
    \end{cases}
  \end{equation}
\end{definition}
A careful reader will point out that up to shifts the objects $\CG^\lambda$
coincide with the left dual exceptional
collection to $\left\langle \Sigma^\mu \CW^* \mid \mu\in \You_{w, h}\right\rangle$
(see Remark~\ref{rm:dual}).
In particular, $\CG^\lambda$ are well defined up to isomorphism.

We can finally present the promised second description of the objects $\CF^\lambda$.
The following proposition uses the notation introduced in~\eqref{eq:cd-pq}.

\begin{proposition}\label{prop:flgl}
  The object $\CF^\lambda$ is isomorphic to $p_*q^*\CG^\lambda$.
\end{proposition}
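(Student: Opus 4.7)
The plan is to verify that $\CF := p_*q^*\CG^\lambda$ satisfies the two defining conditions of~\eqref{eq:fl-up}, with the argument mirroring the proof of Proposition~\ref{prop:fl-reg-sp} (the role of $\SpS{(\PerpQ{\CW})}{\lambda}$ now being played by $\CG^\lambda$). By adjunction for $p$ and the projection formula for $q$,
\[
  \Hom^\bullet_\bfG(\Sigma^\mu\CU,\,p_*q^*\CG^\lambda)\;\cong\;H^\bullet\!\bigl(\IGr(w, V),\,q_*(\Sigma^\mu\CU^*)\otimes\CG^\lambda\bigr)^\bfG.
\]
I would then filter $\Sigma^\mu\CU^*$ on $\IFl(w, n; V)$ via the tautological sequence $0\to(\CU/\CW)^*\to\CU^*\to\CW^*\to 0$ using Lemma~\ref{lm:sch-ses}, and push down through $q$ by Lemma~\ref{lm:lgr-0} (valid since $q$ realizes $\IFl$ as the relative Lagrangian Grassmannian $\LGr_{\IGr(w,V)}(h-1, \PerpQ{\CW})$), obtaining a spectral sequence with $E_1$-terms
\[
  H^\bullet\!\bigl(\IGr(w, V),\,\SpS{(\PerpQ{\CW})}{\nu}\otimes\Sigma^{\mu/\nu}\CW^*\otimes\CG^\lambda\bigr)^\bfG,\qquad\nu\subseteq\mu.
\]

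Decompose each skew Schur factor as $\Sigma^{\mu/\nu}\CW^* = \bigoplus c^\mu_{\nu,\beta}\Sigma^\beta\CW^*$ via Littlewood--Richardson. The defining property~\eqref{eq:gl-def} of $\CG^\lambda$ annihilates every summand with $\beta\neq\lambda^T$, while the $\bfG$-invariant Borel--Bott--Weil vanishing on $\IGr(w, V)$ (Lemma~\ref{lm:igr-0}) constrains which $\nu$ are compatible with a surviving $\beta=\lambda^T$. A careful combinatorial accounting, using that $\mu,\lambda\in\You_{h,w}$, should isolate exactly one nonzero contribution in degree zero when $\mu=\lambda$ and force vanishing for $0\subseteq\mu\subsetneq\lambda$, giving the Hom pattern demanded by~\eqref{eq:fl-up}.

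For the containment $p_*q^*\CG^\lambda\in\langle\Sigma^\mu\CU\mid 0\subseteq\mu\subseteq\lambda\rangle$, I would match $p_*q^*\CG^\lambda$ with the geometric descriptions of Propositions~\ref{prop:fl-reg-sp} and~\ref{prop:fl-irreg-sp} (which already lie in the required subcategory) via a chain of morphisms on $\IGr(w, V)$ between $\CG^\lambda$ and $\SpS{(\PerpQ{\CW})}{\lambda}$ (respectively the analogue from Proposition~\ref{prop:fl-irreg-sp}) whose cones are annihilated by $p_*q^*$; already in the smallest case $\lambda=(1,0)$ on $\IGr(1,V)=\PP^3$, the identification $\CG^{(1,0)}\simeq\Omega^1(1)$ and the sequence $0\to\CO(-1)\to\Omega^1(1)\to\PerpQ{\CW}\to 0$ illustrate this, since $p_*q^*\CO(-1)$ vanishes. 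The main obstacle I anticipate is the combinatorial bookkeeping in the spectral sequence above: isolating the single surviving term at $\mu=\lambda$ requires the simultaneous use of~\eqref{eq:gl-def} and BBW vanishing, with subtle interplay between the weights of the symplectic factor $\SpS{(\PerpQ{\CW})}{\nu}$ and those of the Schur factor $\Sigma^\beta\CW^*$.
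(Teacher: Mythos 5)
There is a genuine gap at the heart of your Hom computation. The $E_1$-terms of your spectral sequence are of the form $H^\bullet\bigl(\IGr(w,V),\,\SpS{(\PerpQ{\CW})}{\nu}\otimes\Sigma^{\mu/\nu}\CW^*\otimes\CG^\lambda\bigr)^\bfG$, and neither of the two tools you invoke controls them. The defining property~\eqref{eq:gl-def} only governs $\Hom^\bullet(\Sigma^\beta\CW^*,\CG^\lambda)=H^\bullet(\Sigma^\beta\CW\otimes\CG^\lambda)$, i.e.\ Hom's from the generators into $\CG^\lambda$; it says nothing about the cohomology of $\CG^\lambda$ tensored with the \emph{positive} bundles $\SpS{(\PerpQ{\CW})}{\nu}\otimes\Sigma^{\mu/\nu}\CW^*$ appearing here. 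Lemma~\ref{lm:igr-0} does not apply either, because $\CG^\lambda$ is not an irreducible equivariant bundle: it is an iterated extension of objects $M_\beta\otimes\Sigma^\beta\CW^*[\ast]$ whose multiplicity spaces $M_\beta$ are nontrivial $\bfG$-representations that are exactly the unknowns. The $\bfG$-invariants can survive through a pairing of a nontrivial irreducible cohomology group with such an $M_\beta$, so the vanishing pattern you assert ("$\CG^\lambda$ annihilates every summand with $\beta\neq\lambda^T$, BBW constrains $\nu$") is not available. Your own test case shows this: for $\CG^{(1,0)}\simeq\Omega^1(1)$ on $\IGr(1,V)=\PP^3$ (with $n=2$), the unique surviving invariant in $\Hom_\bfG(\CU,\,p_*q^*\CG^{(1,0)})=\kk$ comes from the term $H^\bullet(\PP^3,\CW^*\otimes\Omega^1(1))^\bfG$, where the invariant line sits inside $\Lambda^2V$ via the multiplicity space $V$ of the composition factor $V\otimes\CO\subset\Omega^1(1)$ --- precisely the kind of contribution your accounting would discard. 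Carrying out this "combinatorial bookkeeping" in general would require an explicit equivariant composition series of $\CG^\lambda$, which is essentially the information the proposition is meant to bypass.

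The paper runs the argument in the opposite direction, and this reversal is the missing idea. One starts from the explicit geometric models of $\CF^\lambda$ already obtained (Proposition~\ref{prop:fl-reg-sp} for $\lambda_h=0$, Proposition~\ref{prop:fl-irreg-sp} rewritten through the diagrams~\eqref{eq:d-big}--\eqref{eq:d-med} for $\lambda_h>0$, and a separate direct computation for $h=1$), so that $\CF^\lambda\simeq p_*q^*\CF$ for an explicit object $\CF$ on $\IGr(w,V)$. Then one shows that $p_*q^*$ does not distinguish $\CF$ from its projection $X$ to $\CA$: this is Lemma~\ref{lm:two-three}, which rests on Proposition~\ref{prop:dual} and on Lemmas~\ref{lm:perp0} and~\ref{lm:perp} (note that $p_*q^*$ does \emph{not} annihilate $\CA^\perp$, as your "cones annihilated by $p_*q^*$" heuristic suggests; it only maps $\CA^\perp$ into $\CB^\perp$, and one needs to know separately that $p_*q^*\CF\in\CB$ to kill the relevant piece). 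Finally $X\simeq\CG^\lambda$ is verified by computing $\Hom^\bullet(\Sigma^\mu\CW^*,\CF)$ --- Hom's from the generators into the \emph{explicit} bundle $\CF$ --- via Borel--Bott--Weil (Lemma~\ref{lm:igr-kap}), which is exactly the computation that the universal property of $\CG^\lambda$ is designed to absorb. Your last paragraph gestures toward this mechanism but leaves it entirely unproven, and it is also needed (not just "illustrated") for the containment statement, where the coarse membership $p_*q^*\CG^\lambda\in\CB$ of Proposition~\ref{prop:dual} must still be upgraded or circumvented.
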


In order to prove the latter statement, we need to consider three cases,
which we treat separately: $h=1$ and $w=n$, $\lambda_h=0$, and $\lambda_h>0$,
which are treated in Propositions~\ref{prop:flgl-1}, \ref{prop:flgl-reg},
and~\ref{prop:flgl-irreg} respectively.
We begin with a simple observation that will be useful
in all these cases.

\begin{lemma}\label{lm:dual}
  Let $\nu\in\You_{n, n}$ be a Young diagram. Then the subcategories
  \begin{equation*}
    \left\langle \Sigma^\mu\CU^* \mid 0\subseteq\mu \subseteq \nu \right\rangle
    \quad \text{and} \quad
    \left\langle \Sigma^\mu\CU \mid 0\subseteq\mu \subseteq \nu^T \right\rangle
  \end{equation*}
  coincide in $D^b(\LGr(n, V))$.
\end{lemma}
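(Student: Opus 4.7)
I would prove the lemma by strong induction on $|\nu|$. The base case $\nu = \emptyset$ is immediate, since both subcategories reduce to $\langle \CO \rangle$. For the inductive step, I exploit the symmetry of the statement under the transposition swap $\nu \leftrightarrow \nu^T$, $\CU \leftrightarrow \CU^*$---which interchanges the two subcategories---so it suffices to prove the containment $\CC_1 := \langle \Sigma^\mu\CU^* \mid \mu \subseteq \nu\rangle \subseteq \CC_2 := \langle \Sigma^\mu\CU \mid \mu \subseteq \nu^T\rangle$.

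For each $\lambda \subsetneq \nu$, the inductive hypothesis applied to $\lambda$ gives $\Sigma^\lambda\CU^* \in \langle \Sigma^\beta\CU \mid \beta \subseteq \lambda^T\rangle \subseteq \CC_2$, using that $\lambda \subseteq \nu$ implies $\lambda^T \subseteq \nu^T$. Only the case $\Sigma^\nu\CU^* \in \CC_2$ remains. For this, I would apply Lemma~\ref{lm:sch-ses} to the tautological sequence $0 \to \CU \to V \to \CU^* \to 0$ with weight $\nu$. Since $V$ is trivial, $\Sigma^\nu V \in \CC_2$; the filtration realizes $\Sigma^\nu\CU^*$ as the top quotient of $\Sigma^\nu V$, the kernel being an iterated extension of $\Sigma^\mu\CU \otimes \Sigma^{\nu/\mu}\CU^*$ for $\emptyset \neq \mu \subseteq \nu$.

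Decomposing each intermediate graded piece via the Littlewood--Richardson rule yields $\Sigma^\mu\CU \otimes \Sigma^\alpha\CU^*$ with $\alpha \subseteq \nu$ and $|\alpha| < |\nu|$; by the inductive hypothesis, $\Sigma^\alpha\CU^*$ is expressible as an iterated cone of $\Sigma^\beta\CU$ with $\beta \subseteq \alpha^T \subseteq \nu^T$. The problem reduces to showing $\Sigma^\mu\CU \otimes \Sigma^\beta\CU \in \CC_2$ for each such $\mu$ and $\beta$.

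The hard part will be this final step. The Littlewood--Richardson decomposition $\Sigma^\mu\CU \otimes \Sigma^\beta\CU = \bigoplus_\gamma c_{\mu,\beta}^\gamma \Sigma^\gamma\CU$ may produce Young diagrams $\gamma$ with $\gamma_i \leq \mu_i + \beta_i$ exceeding $(\nu^T)_i$, so the naive argument stalls. The cleanest bypass is to construct an Akin--Buchsbaum--Weyman-style resolution of $\Sigma^\nu\CU^*$ (available since $V \to \CU^*$ is a surjection of vector bundles) whose non-trivial terms are precisely bundles $\Sigma^\beta\CU$ with $\beta \subseteq \nu^T$ tensored by trivial bundles; this places $\Sigma^\nu\CU^*$ directly into $\CC_2$ and completes the induction.
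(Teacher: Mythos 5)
Your proposal ultimately rests on the Schur-complex resolution, and with that step granted it is correct, but it takes a genuinely different route from the paper. The paper's proof is a two-line reduction to Kapranov: on the ambient Grassmannian $\Gr(n,V)$ the collection $\left\langle \Sigma^\mu\CU^* \mid \mu\subseteq\nu\right\rangle$ has left dual $\left\langle \Sigma^\mu(V/\CU)^* \mid \mu\subseteq\nu^T\right\rangle$, dual collections generate the same subcategory, and restricting along $\iota\colon \LGr(n,V)\hookrightarrow\Gr(n,V)$ with $\iota^*(V/\CU)^*\simeq\CU$ gives the lemma. You instead propose to exhibit $\Sigma^\nu\CU^*$ directly as an iterated extension inside $\left\langle \Sigma^\beta\CU \mid \beta\subseteq\nu^T\right\rangle$ via the Akin--Buchsbaum--Weyman Schur complex of the tautological sequence $0\to\CU\to V\otimes\CO\to\CU^*\to 0$; this complex has terms $\Sigma^{\mu^T}\CU\otimes\Sigma^{\nu/\mu}V\otimes\CO$ for $\mu\subseteq\nu$ (the transpose occurs because the subbundle sits in odd homological degree), and in characteristic zero it is exact, being a direct summand of a tensor power of the two-term resolution of $\CU^*$. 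That assertion is where all the content of your argument lives, so it must be stated and justified or cited (e.g.\ from Weyman's book on Schur complexes), not merely invoked; it is also, in substance, the same mechanism that underlies Kapranov's dual-collection computation, so your route is more self-contained while the paper's is shorter. Two clean-ups: once the resolution is available, the induction on $|\nu|$ and the intermediate reduction via Lemma~\ref{lm:sch-ses} and Littlewood--Richardson (which, as you note, stalls) are unnecessary and should be dropped, since every $\Sigma^\mu\CU^*$ with $\mu\subseteq\nu$ is resolved by bundles $\Sigma^\beta\CU$ with $\beta\subseteq\mu^T\subseteq\nu^T$ tensored by trivial bundles; and the ``transposition symmetry'' giving the reverse containment should be made precise, e.g.\ by applying the anti-autoequivalence $E\mapsto \RCHom(E,\CO)$, which exchanges $\Sigma^\mu\CU^*$ and $\Sigma^\mu\CU$ and preserves generated subcategories, and then replacing $\nu$ by $\nu^T$.
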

\begin{proof}
  Consider the closed embedding $\iota : \LGr(n, V) \to \Gr(n, V)$. Remark that the tautological
  bundle on $\LGr(n, V)$ is the restriction of the tautological bundle on $\Gr(n, V)$;
  we denote both by $\CU$. Moreover, the Lagrangian condition implies
  $\iota^*(V/\CU)^* \simeq \CU$. Kapranov showed in~\cite{Kap} that for any $\nu\in \You_{n, n}$
  the~bundles $\left\langle \Sigma^\mu\CU^* \mid 0\subseteq\mu \subseteq \nu \right\rangle$ form
  an exceptional collection in $D^b(\Gr(n, V))$, and the left dual to this collection
  is $\left\langle \Sigma^\mu(V/\CU)^* \mid 0\subseteq\mu \subseteq \nu^T \right\rangle$.
  Once we apply $\iota^*$, the claim follows immediately.
\end{proof}

\begin{proposition}\label{prop:dual}
  The object $p_*q^*\CG^\lambda$ belongs to the subcategory
  \begin{equation*}
    \left\langle \Sigma^\mu\CU \mid \mu \in \You_{h, w}\right\rangle \subset D^b(\LGr(n, V)).
  \end{equation*}
\end{proposition}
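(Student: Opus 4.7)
The plan is to reduce the problem to a computation of $p_* q^* \Sigma^\mu \CW^*$ for each generator of $\CG^\lambda$. By Definition~\ref{def:gl}, $\CG^\lambda$ lies in the triangulated subcategory $\langle \Sigma^\mu \CW^* \mid \mu \in \You_{w, h}\rangle \subset D^b(\IGr(w, V))$. Since $p_* q^*$ is a triangulated functor, it suffices to show that $p_* q^* \Sigma^\mu \CW^* \in \langle \Sigma^\gamma \CU \mid \gamma \in \You_{h, w}\rangle$ for every $\mu \in \You_{w, h}$.

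Next I would identify $p: \IFl(w, n; V) \to \LGr(n, V)$ with the relative Grassmannian $\Gr_{\LGr(n, V)}(w, \CU)$, under which $q^* \CW$ becomes the relative tautological rank-$w$ subbundle of $p^* \CU$. For $\mu \in \YD_w$, the global sections of $\Sigma^\mu \CW^*$ on each fibre realise the $\GL_n$-irreducible of highest weight $\tilde\mu := (\mu_1, \ldots, \mu_w, 0, \ldots, 0) \in \YD_n$, and all higher cohomology vanishes by Bott's theorem. Applying relative Borel--Weil to $p$ therefore yields
\[
  p_* q^* \Sigma^\mu \CW^* \simeq \Sigma^{\tilde\mu} \CU^*,
  \qquad R^i p_* q^* \Sigma^\mu \CW^* = 0 \text{ for } i > 0.
\]

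The final step is purely combinatorial and uses Lemma~\ref{lm:dual}. Set $\nu := (h, h, \ldots, h, 0, \ldots, 0) \in \You_{n, n}$, with $w$ copies of $h$ followed by $h - 1$ zeros. The hypothesis $\mu \in \You_{w, h}$ translates precisely to $\tilde\mu \subseteq \nu$. The transpose is $\nu^T = (w, w, \ldots, w) \in \YD_h$, and a Young diagram $\gamma$ satisfies $\gamma \subseteq \nu^T$ if and only if $\gamma \in \You_{h, w}$. Lemma~\ref{lm:dual} applied to $\nu$ therefore gives
\[
  \Sigma^{\tilde\mu} \CU^* \in \langle \Sigma^\beta \CU^* \mid 0 \subseteq \beta \subseteq \nu\rangle
  = \langle \Sigma^\gamma \CU \mid \gamma \in \You_{h, w}\rangle,
\]
completing the argument.

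The main obstacle is the relative Borel--Weil computation in the second step: one must establish both the identification of $p_* q^* \Sigma^\mu \CW^*$ with $\Sigma^{\tilde\mu} \CU^*$ and the vanishing of all higher direct images. This is standard for Young diagram weights on a Grassmannian, and is of the same flavour as the pushforward computations used in the proofs of Propositions~\ref{prop:fl-reg-sp} and~\ref{prop:fl-irreg-sp}, but it is the only non-formal input in the argument.
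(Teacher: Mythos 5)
Your proposal is correct and follows essentially the same route as the paper: reduce to the generators $\Sigma^\mu\CW^*$, compute $p_*q^*\Sigma^\mu\CW^*\simeq\Sigma^\mu\CU^*$ (the paper gets this from Lemma~\ref{lm:gr-0}, which is exactly the relative Borel--Bott--Weil computation you invoke), and then apply Lemma~\ref{lm:dual} to the $w\times h$ rectangle. The only quibble is the phrase about fibrewise sections realising the irreducible of highest weight $\tilde\mu$ (they realise its dual, consistent with the asserted identification with $\Sigma^{\tilde\mu}\CU^*$), which does not affect the argument.
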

\begin{proof}
  By definition, the object $\CG^\lambda$ belongs to the subcategory
  \begin{equation*}
    \left\langle \Sigma^\mu\CW^* \mid \mu \in \You_{w, h}\right\rangle \subset D^b(\IGr(w, V)).
  \end{equation*}
  It follows from Lemma~\ref{lm:gr-0} that $p_*q^*\Sigma^\mu\CW^* = \Sigma^\mu\CU^*$
  for any $\mu \in \YD_w$.
  Thus,
  \begin{equation*}
    p_*q^*\CG^\lambda \in \left\langle \Sigma^\mu\CU^* \mid \mu \in \You_{w, h}\right\rangle \subset D^b(\LGr(n, V)).
  \end{equation*}
  The claim now follows from Lemma~\ref{lm:dual} applied to $\nu = (\underbrace{h, h, \ldots, h}_{w\text{ times}})$.
\end{proof}

\begin{proposition}\label{prop:flgl-1}
  Proposition~\ref{prop:flgl} holds when $h=1$ and $w=n$.
\end{proposition}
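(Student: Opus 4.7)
When $h=1$ and $w=n$, the partial flag variety in~\eqref{eq:cd-pq} degenerates. Indeed, a~flag $W\subset U$ with $\dim W = \dim U = n$ forces $W=U$, so $\IFl(n, n; V)\simeq \LGr(n, V) = \IGr(n, V)$, both projections $p$ and $q$ become identifications, and $\CW=\CU$. Consequently $p_*q^*\CG^\lambda\simeq \CG^\lambda$, and the statement reduces to proving $\CF^\lambda\simeq \CG^\lambda$ in $D^b(\LGr(n, V))$ for each single-row diagram $\lambda=(k)\in\You_{1, n}$, $0\leq k\leq n$.

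My plan is to check that $\CF^{(k)}$ satisfies both characterizing conditions of $\CG^{(k)}$ from~\eqref{eq:gl-def}: containment in the subcategory $\langle \Lambda^a\CU^* \mid 0\leq a\leq n\rangle$, and the prescribed $\Hom$-spaces from all the $\Lambda^a\CU^*$. The containment follows by combining Remark~\ref{rm:bun}---which places $\CF^{(k)}$ in $\langle \Sigma^\mu\CU\mid \mu\subseteq (k)\rangle\subseteq \langle \Sigma^\mu\CU\mid \mu\subseteq (n)\rangle$---with Lemma~\ref{lm:dual} applied to $\nu=(1^n)$ (whose transpose is $(n)$): this identifies the latter subcategory with $\langle \Sigma^\mu\CU^*\mid \mu\subseteq (1^n)\rangle = \langle \Lambda^a\CU^*\mid 0\leq a\leq n\rangle$ in $D^b(\LGr(n, V))$.

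For the $\Hom$ computation, the case $k=0$ is immediate: $\CF^{0}\simeq \CO$, and Lemma~\ref{lm:lgr-0} yields $H^\bullet(\LGr(n, V), \Lambda^a\CU) = 0$ for $a\geq 1$. For $k\geq 1$, I would use Proposition~\ref{prop:fl-irreg-sp} to write $\CF^{(k)}\simeq \ptl_*(\det(\CU/\CH)\otimes \qtl^*S^{k-1}(\PerpQ{\CH}))$, where $\IFl(n-1, n; V)\simeq \PP(\CU)$ is the projectivization over $\LGr(n, V)$ with $\det(\CU/\CH)=\CO_{\PP(\CU)}(1)$, and $\PerpQ{\CH}$ is a rank-$2$ symplectic bundle on $\IGr(n-1, V)$ containing $\CU/\CH$ as a Lagrangian line subbundle (so $\SpS{\left(\PerpQ{\CH}\right)}{(k-1)} = S^{k-1}(\PerpQ{\CH})$). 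Adjunction gives
\begin{equation*}
\Hom^\bullet(\Lambda^a \CU^*, \CF^{(k)}) \simeq \Hom^\bullet\!\left(\ptl^*\Lambda^a\CU^*,\ \det(\CU/\CH)\otimes \qtl^* S^{k-1}(\PerpQ{\CH})\right),
\end{equation*}
and combining the two-term filtration on $\ptl^*\Lambda^a\CU^*$ induced by $0\to(\CU/\CH)^*\to \CU^*\to \CH^*\to 0$ (with only two pieces, since $(\CU/\CH)^*$ is a line bundle) with the filtration of $S^{k-1}(\PerpQ{\CH})$ whose graded pieces are the line bundles $(\CU/\CH)^{k-1-2j}$ (coming from the symplectic Lagrangian decomposition $0\to \CU/\CH\to \PerpQ{\CH}\to (\CU/\CH)^*\to 0$), together with the projection formula for $\qtl$, reduces the computation to Borel--Bott--Weil on $\IGr(n-1, V)$ for tensor products of $\Sigma^\bullet\CH^*$ with $\SpS{\left(\PerpQ{\CH}\right)}{\bullet}$; the required vanishings and the surviving one-dimensional contribution are then read off from the appendix Lemmas~\ref{lm:igr-0}, \ref{lm:lgr-0}, and~\ref{lm:igr-van}.

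The main obstacle will be isolating this unique one-dimensional contribution and showing that it lands in cohomological degree exactly $k$ when $a=k$ while every other cohomology group vanishes. The shift $[-k]$ accumulates from higher direct images of $\ptl$ applied to the negative-twist line bundles $\CO_{\PP(\CU)}(-j)$ that appear in the filtration of $\ptl^*S^{k-1}(\PerpQ{\CH})\otimes \CO_{\PP(\CU)}(1)$, and one must check that all other such contributions cancel on the nose against the filtration pieces coming from $\Lambda^a\CU^*$.
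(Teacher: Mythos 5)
Your reduction is the right one and matches the paper: since $\IFl(n,n;V)=\LGr(n,V)$, the claim becomes $\CF^{(k)}\simeq\CG^{(k)}$; the containment in $\langle\CO,\CU^*,\ldots,\Lambda^n\CU^*\rangle$ via Remark~\ref{rm:bun} plus Lemma~\ref{lm:dual}, the case $k=0$, and the use of Proposition~\ref{prop:fl-irreg-sp} for $k\geq 1$ are all as in the paper. But the heart of the matter --- the computation of $\Hom^\bullet(\Lambda^a\CU^*,\CF^{(k)})$ --- is not actually carried out, and the plan you sketch would not go through as written. First, it conflates two different pushforwards: you claim a reduction to cohomology on $\IGr(n-1,V)$ (which would come from pushing along $\qtl$), yet your last paragraph computes higher direct images along $\ptl$ of powers of $\CU/\CH$, and writes $\ptl^*S^{k-1}(\PerpQ{\CH})$, which does not typecheck since $S^{k-1}(\PerpQ{\CH})$ lives on $\IGr(n-1,V)$ and is pulled back along $\qtl$. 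Second, filtering $\qtl^*S^{k-1}(\PerpQ{\CH})$ into the line bundles $(\CU/\CH)^{\otimes(k-1-2j)}$ is unnecessary and, along your $\ptl$-route, fatal: pushing to $\LGr(n,V)$ produces terms $H^\bullet(\LGr(n,V),\Lambda^a\CU\otimes S^m\CU)$ for $m=k,k-2,\ldots\geq 0$, all of which are nonzero in degree $0$ by Borel--Bott--Weil, so the answer $\kk[-k]$ could only emerge from large cancellations in the resulting spectral sequence --- exactly the step you flag as ``the main obstacle'' and leave unverified. Third, the appendix lemmas you cite do not give what is needed: Lemma~\ref{lm:igr-0} computes only the $\bfG$-invariant part of cohomology, Lemma~\ref{lm:igr-van} has no $\PerpQ{\CW}$-factor, and Lemma~\ref{lm:lgr-0} is a relative Lagrangian pushforward statement; none of them computes $H^\bullet(\IGr(n-1,V),\,\Lambda^{a-1}\CH\otimes S^{k-1}(\PerpQ{\CH}))$.

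The paper's computation avoids all of this. Keep $S^{k-1}(\PerpQ{\CH})$ intact (it is pulled back along $\qtl$, so the projection formula applies to it) and filter only $\Lambda^a\CU\otimes(\CU/\CH)$ via $0\to\Lambda^a\CH\otimes(\CU/\CH)\to\Lambda^a\CU\otimes(\CU/\CH)\to\Lambda^{a-1}\CH\otimes(\CU/\CH)^{\otimes 2}\to 0$. Since $\IFl(n-1,n;V)\simeq\PP(\PerpQ{\CH})$ over $\IGr(n-1,V)$ with $\CU/\CH$ the tautological line subbundle, one has $\qtl_*(\CU/\CH)=0$ and $\qtl_*(\CU/\CH)^{\otimes 2}\simeq\det(\PerpQ{\CH})[-1]\simeq\CO[-1]$, hence $\qtl_*(\Lambda^a\CU\otimes(\CU/\CH))\simeq\Lambda^{a-1}\CH[-1]$ with no spectral sequence to control; the one-dimensionality and the vanishing are then read off from Lemma~\ref{lm:igr-kap} (the lemma you never invoke), which gives $H^\bullet(\IGr(n-1,V),\,\Lambda^{a-1}\CH\otimes S^{k-1}(\PerpQ{\CH}))=\kk[-(a-1)]$ exactly when $a=k$ and $0$ otherwise, producing the $\kk$ in total degree $k$. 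Until you either switch to this $\qtl$-pushforward argument or genuinely control the cancellations in your $\ptl$-route, the key step of the proposition remains unproven.
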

\begin{proof}
  We need to prove that the bundles $\CF^{(k)}$ form a left dual exceptional collection
  to the collection $\left\langle \CO, \CU^*, \Lambda^2\CU^*, \ldots, \Lambda^n\CU^* \right\rangle$
  in the sense of Definition~\ref{def:gl}. From Proposition~\ref{prop:dual} we know
  that the object $\CF^{(k)}$ belongs to the subcategory
  $\left\langle \CO, \CU^*, \Lambda^2\CU^*, \ldots, \Lambda^n\CU^* \right\rangle$.
  It remains to show that
  \begin{equation}\label{eq:flgl-1}
    \Hom^\bullet(\Lambda^i\CU^*, \CF^{(j)}) =
    \begin{cases}
      \kk[-i] & \text{if } i = j, \\
      0 & \text{otherwise}.
    \end{cases}
  \end{equation}
  If $j=0$, $\CG^{(0)} = \CO$, and the statement follows from exceptionality of
  $\left\langle \CO, \CU^*, \Lambda^2\CU^*, \ldots, \Lambda^n\CU^* \right\rangle$. If $j>0$,
  consider the diagram
  \begin{equation*}
    \begin{tikzcd}
      & \IFl(n-1, n; V) \arrow[ld, "\tilde{q}" swap] \arrow[rd, "\tilde{p}"] & \\
      \IGr(n-1, V) & & \LGr(n, V).
    \end{tikzcd}
  \end{equation*}
  Let $\CH\subset\CU$ denote the universal flag on $\IFl(n-1,n;V)$.
  By Proposition~\ref{prop:fl-irreg-sp},
  \begin{equation*}
    \CF^{(j)}\simeq \ptl_*\left(\det\,(\CU/\CH) \otimes \qtl^*\SpS{(\PerpQ{\CH})}{j-1}\right) =
    \ptl_*\left((\CU/\CH) \otimes \qtl^*S^{(j-1)}(\PerpQ{\CH})\right),
  \end{equation*}
  where $\SpS{(\PerpQ{\CH})}{j-1}\simeq S^{(j-1)}(\PerpQ{\CH})$ since $\PerpQ{\CH}$ is $2$-dimensional.
  Now,
  \begin{align}
    \Hom^\bullet(\Lambda^i\CU^*, \CG^{(j)})
    & \simeq \Hom^\bullet(\Lambda^i\CU^*, \ptl_*((\CU/\CH) \otimes \qtl^*S^{(j-1)}(\PerpQ{\CH}))) \nonumber \\
    & \simeq \Hom^\bullet(\Lambda^i\CU^*, (\CU/\CH) \otimes \qtl^*S^{(j-1)}(\PerpQ{\CH})) \nonumber \\
    & \simeq H^\bullet(\IFl(n-1, n; V),\,\Lambda^i\CU\otimes (\CU/\CH) \otimes \qtl^*S^{(j-1)}(\PerpQ{\CH})) \nonumber \\
    & \simeq H^\bullet(\IGr(n-1, V),\,\qtl_*\left(\Lambda^i\CU\otimes (\CU/\CH)\right) \otimes S^{(j-1)}(\PerpQ{\CH})). \nonumber
  \end{align}
  
  If $i = 0$, then $\qtl_*\left(\Lambda^i\CU\otimes (\CU/\CH)\right) = \qtl_*(\CU/\CH) = 0$ as
  $\IFl(w-1, w; V) \simeq \PP(\PerpQ{\CH})$ is the projectivization of the rank~$2$ bundle
  $\PerpQ{\CH}$ and $\CU/\CH$ is the relative tautological
  line bundle. Thus, \eqref{eq:flgl-1} holds for $i = 0$.

  If $i>0$, one has a short exact sequence
  $0\to \Lambda^i\CH \to \Lambda^i\CU\to \Lambda^{i-1}\CH\otimes (\CU/\CH)\to 0$. Twisting
  it by $\CU/\CH$, we obtain a short exact sequence
  \begin{equation}\label{eq:flgl-1-ses}
    0\to \Lambda^i\CH\otimes (\CU/\CH) \to \Lambda^i\CU\otimes (\CU/\CH)
    \to \Lambda^{i-1}\CH\otimes (\CU/\CH)^{\otimes 2}\to 0.
  \end{equation}
  As $\qtl_*(\Lambda^i\CH\otimes (\CU/\CH))\simeq \Lambda^i\CH\otimes \qtl_*(\CU/\CH) \simeq 0$,
  we conclude that
  \begin{equation*}
    \qtl_*(\Lambda^i\CU\otimes (\CU/\CH)) \simeq
    \Lambda^{i-1}\CH\otimes \qtl_*(\CU/\CH)^{\otimes 2} \simeq
    \Lambda^{i-1}\CH\otimes \det\,\PerpQ{\CH}[-1] \simeq
    \Lambda^{i-1}\CH[-1].
  \end{equation*}
  Thus,
  \begin{align}
    \Hom^\bullet(\Lambda^i\CU^*, \CG^{(j)})
    & = H^\bullet(\IGr(n-1, V),\,\qtl_*\left(\Lambda^i\CU\otimes (\CU/\CH)\right) \otimes S^{(j-1)}(\PerpQ{\CH})) \nonumber \\
    & = H^\bullet(\IGr(n-1, V),\,\Lambda^{i-1}\CH \otimes S^{j-1}(\PerpQ{\CH}))[-1]. \nonumber 
  \end{align}
  From Lemma~\ref{lm:igr-kap} we know that
  \begin{equation*}
    H^\bullet(\IGr(n-1, V),\,\Lambda^{i-1}\CH \otimes S^{(j-1)}(\PerpQ{\CH})) =
    \begin{cases}
      \kk[-i+1] & \text{if } i = j, \\
      0 & \text{otherwise},
    \end{cases}
  \end{equation*}
  which finishes the proof.
\end{proof}

Until the end of this section we assume that $w<n$.
By Lemma~\ref{lm:igr-ec}, the collection $\left\langle \Sigma^\mu\CW^* \mid \mu\in\You_{w, h}\right\rangle$
is exceptional in $D^b(\IGr(w, V))$. Thus, it generates an admissible full triangulated
subcategory, which we denote by $\CA$.
Consider the semiorthogonal decomposition
\begin{equation}\label{eq:ca}
  D^b(\IGr(w, V)) = \langle \CA^\perp, \CA\rangle.
\end{equation}
With any object $Z\in D^b(\IGr(w, V))$ one can associate a functorial triangle
\begin{equation*}
  X \to Z\to Y\to X[1],
\end{equation*}
where $X\in \CA$ and $Y\in \CA^\perp$ are the projections of $Z$ on $\CA$
and $\CA^\perp$ respectively.

\begin{lemma}\label{lm:perp0}
  Let $Y\in \CA^\perp$, and let $\mu\in\You_{n, h}$.
  Then $\Hom^\bullet(\Sigma^\mu\CU^*,\, p_*q^*Y)=0$.
\end{lemma}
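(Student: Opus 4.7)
The plan is to transform the vanishing of $\Hom^\bullet$ into a vanishing on $\IGr(w, V)$ by filtering $\Sigma^\mu\CU^*$ on $\IFl(w, n; V)$ into pieces whose $q$-pushforward lands in $\CA$ up to shifts, so that the hypothesis $Y \in \CA^\perp$ will force the conclusion. First, since $p^*\CU\simeq \CU$ on $\IFl(w, n; V)$, adjunction gives $\Hom^\bullet(\Sigma^\mu\CU^*, p_*q^*Y) \simeq \Hom^\bullet_\IFl(\Sigma^\mu\CU^*, q^*Y)$. I will then apply Lemma~\ref{lm:sch-ses} to the dual tautological sequence $0\to (\CU/\CW)^* \to \CU^* \to \CW^* \to 0$ on $\IFl$, producing a filtration of $\Sigma^\mu\CU^*$ with associated graded $\bigoplus_{\nu\subseteq \mu} \Sigma^\nu(\CU/\CW)^* \otimes \Sigma^{\mu/\nu}\CW^*$. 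The resulting spectral sequence reduces the task to checking that $\Hom^\bullet_\IFl(\Sigma^\nu(\CU/\CW)^* \otimes \Sigma^{\mu/\nu}\CW^*, q^*Y) = 0$ for every $\nu\subseteq\mu$; only $\nu\in\YD_{h-1}$ matter, since $\Sigma^\nu(\CU/\CW)^*$ vanishes otherwise.

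Next, I dualize $\Sigma^\nu(\CU/\CW)^*$ in the first argument, use that $\CW$ is pulled back via $q$, and combine the $q^*\dashv Rq_*$ adjunction with the projection formula to rewrite the Hom as
\[
  \Hom^\bullet_{\IGr(w, V)}\!\left(\Sigma^{\mu/\nu}\CW^*,\, Y \otimes Rq_*\Sigma^\nu(\CU/\CW)\right).
\]
The central input is then the following Borel--Bott--Weil computation: for $\nu\in\YD_{h-1}$ with $\nu\subseteq \mu$, the pushforward $Rq_*\Sigma^\nu(\CU/\CW)$ is either zero or isomorphic to $\CO_{\IGr(w, V)}[k]$ for some $k$. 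Since $q$ realizes $\IFl$ as the relative Lagrangian Grassmannian $\LGr_\IGr(h-1, \PerpQ{\CW})$ with $\CU/\CW$ the relative Lagrangian subbundle, the computation is fiberwise on $\LGr(h-1, 2(h-1))$. In the standard convention the BBW weight of $\Sigma^\nu(\CU/\CW)$ is $(-\nu_{h-1}, \ldots, -\nu_1)$, so that the coordinates of this weight plus $\rho_{\SP_{2(h-1)}}=(h-1,\ldots,1)$ take the form $j - \nu_j$. The constraint $\nu_j\leq \mu_j\leq h$ forces each such coordinate to have absolute value at most $h-1$; hence, whenever the shifted weight is regular, its $h-1$ nonzero distinct absolute values must exhaust $\{1, 2, \ldots, h-1\}$, and so the unique Weyl-conjugate dominant weight is precisely $\rho_{\SP_{2(h-1)}}$ itself. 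Subtracting $\rho$ yields dominant weight $0$, and the pushforward is $\CO_\IGr[k]$ for the appropriate shift.

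Granting the key claim, the Hom reduces to $\Hom^\bullet_{\IGr(w, V)}(\Sigma^{\mu/\nu}\CW^*, Y)[k]$. Expanding via the Littlewood--Richardson rule, $\Sigma^{\mu/\nu}\CW^* = \bigoplus c^\mu_{\nu, \beta}\Sigma^\beta\CW^*$, each nonzero summand satisfies $\beta\subseteq \mu$ (so $\beta_1\leq h$) and, after discarding the pieces where $\beta$ has more than $w$ rows (on which $\Sigma^\beta\CW^*$ vanishes), $\beta\in \You_{w, h}$. Thus each surviving $\Sigma^\beta\CW^*$ lies in $\CA$, and the hypothesis $Y\in \CA^\perp$ produces the desired vanishing.

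The main obstacle will be the Borel--Bott--Weil computation outside the dominant chamber. One must carefully pin down the correspondence between $\GL_{h-1}$-weights and $\SP_{2(h-1)}$-weights consistent with the conventions developed in the Appendix, and verify that the inequality $\nu_1\leq h$ is exactly what forces every regular shifted weight to be Weyl-conjugate to $\rho$. This is the structural reason the argument works precisely on the exceptional-block range $\You_{n, h}$ and would break down for larger diagrams.
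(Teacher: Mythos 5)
Your proposal is correct and takes essentially the same route as the paper's proof: adjunction and the projection formula to move the computation to $\IGr(w,V)$, the skew-Schur filtration of Lemma~\ref{lm:sch-ses}, the pigeonhole Borel--Bott--Weil input (which is exactly Lemma~\ref{lm:q-neg}, re-derived inline, with the bound $\nu_1\leq h$ coming from $\mu\in\You_{n,h}$), and finally semiorthogonality of $Y\in\CA^\perp$ against $\Sigma^\beta\CW^*\in\CA$. The only cosmetic difference is that you filter $\Sigma^\mu\CU^*$ and dualize termwise, whereas the paper rewrites the Hom as $H^\bullet(\IGr(w,V),\,q_*(\Sigma^\mu\CU)\otimes Y)$ and filters $\Sigma^\mu\CU$ directly.
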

\begin{proof}
  We need to show the vanishing of
  \begin{align*}
    \Hom^\bullet(\Sigma^\mu\CU^*,\, p_*q^*Y)
    & \simeq \Hom^\bullet(\Sigma^\mu\CU^*,\, q^*Y) \\
    & \simeq H^\bullet(\IFl(w, n; V),\, \Sigma^\mu\CU\otimes q^*Y) \\
    & \simeq H^\bullet(\IGr(w, V),\, q_*(\Sigma^\mu\CU)\otimes Y).
  \end{align*}
  By Lemma~\ref{lm:sch-ses}, there is a filtration on $\Sigma^\mu\CU^\mu$ with the
  associated quotients of the form $\Sigma^\nu(\CU/\CW)\otimes \Sigma^{\mu/\nu}\CW$,
  where $0\subseteq \nu \subseteq \mu$, while $\Sigma^{\mu/\nu}\CW$ splits into a direct
  sum of equivariant vector bundles of the form $\Sigma^\tau\CW$ with $0\subseteq \tau\subseteq \mu$
  (in particular, $\Sigma^\tau\CW=0$ if $\tau$ has more than $w$ rows, and
  $\Sigma^\tau\CW\neq 0$ otherwise).
  Looking at the associated spectral sequence, we see that it is enough to show that
  for any pair of diagrams $\alpha,\beta\subseteq \mu$ one has
  \begin{equation}\label{eq:lm-perp0}
    H^\bullet(\IGr(w, V),\, q_*\Sigma^\alpha(\CU/\CW)\otimes \Sigma^\beta \CW \otimes Y) = 0.
  \end{equation}
  By our assumptions, $\alpha_1\leq h$. Thus, by Lemma~\ref{lm:q-neg}, either
  $q_*\Sigma^\alpha(\CU/\CW) = 0$, or $q_*\Sigma^\alpha(\CU/\CW)\simeq \CO[t]$ for some $t\in\ZZ$.
  In the first case, the cohomology groups~\eqref{eq:lm-perp0} vanish, while in the
  second case,
  \begin{equation*}
    H^\bullet(\IGr(w, V),\, q_*\Sigma^\alpha(\CU/\CW)\otimes \Sigma^\beta \CW \otimes Y)
    \simeq
    H^\bullet(\IGr(w, V),\, \Sigma^\beta \CW \otimes Y[t])
    \simeq
    \Hom(\Sigma^\beta\CW^*,\, Y[t]) = 0,
  \end{equation*}
  since $Y\in \CA^\perp$, and $\Sigma^\beta\CW^*\in\CA$ (when $\beta$ has more than $w$
  rows, $\Sigma^\beta\CW^*=0$).
\end{proof}

Put $\CB = \left\langle \Sigma^\mu\CU \mid \mu \in \You_{h, w}\right\rangle$.

\begin{lemma}\label{lm:perp}
  Let $Y\in \CA^\perp$. Then $p_*q^*Y\in \CB^\perp$.
\end{lemma}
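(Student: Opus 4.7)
The plan is to reduce the statement to Lemma~\ref{lm:perp0} via the duality Lemma~\ref{lm:dual}. By definition of $\CB^\perp$, I need to verify that
\begin{equation*}
  \Hom^\bullet(\Sigma^\mu \CU,\, p_*q^*Y) = 0 \quad \text{for every } \mu \in \You_{h, w},
\end{equation*}
since the bundles $\Sigma^\mu\CU$ with $\mu\in\You_{h,w}$ generate $\CB$. The issue is that Lemma~\ref{lm:perp0} provides vanishing for $\Sigma^\mu\CU^*$ rather than $\Sigma^\mu\CU$, so the first task is to switch between these two families of generators.

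First, I would invoke Lemma~\ref{lm:dual} with the rectangular diagram $\nu=(\underbrace{h,h,\ldots,h}_{w\text{ times}})\in\You_{n,n}$. Since $\nu^T=(\underbrace{w,w,\ldots,w}_{h\text{ times}})$, the conditions $0\subseteq\mu\subseteq\nu^T$ and $0\subseteq\mu\subseteq\nu$ cut out exactly the sets $\You_{h,w}$ and $\You_{w,h}$ respectively. Thus Lemma~\ref{lm:dual} gives the identification
\begin{equation*}
  \CB = \left\langle \Sigma^\mu\CU \mid \mu\in\You_{h,w} \right\rangle
       = \left\langle \Sigma^\mu\CU^* \mid \mu\in\You_{w,h} \right\rangle
\end{equation*}
inside $D^b(\LGr(n, V))$. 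In particular, it suffices to check that $\Hom^\bullet(\Sigma^\mu\CU^*, p_*q^*Y)=0$ for every $\mu\in\You_{w,h}$.

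Finally, since we are in the standing assumption $w<n$, we have an obvious inclusion $\You_{w,h}\subseteq \You_{n,h}$ (a diagram with at most $w$ rows has at most $n$ rows). The desired vanishing is therefore a special case of Lemma~\ref{lm:perp0}, which yields $\Hom^\bullet(\Sigma^\mu\CU^*, p_*q^*Y)=0$ for all $\mu\in\You_{n,h}$ and all $Y\in\CA^\perp$. This completes the reduction.

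The main (and only) conceptual step is recognizing that $\CB$ admits a dual description in terms of $\CU^*$-bundles indexed by the transposed rectangle; everything else is bookkeeping with the shapes $\You_{h,w}$, $\You_{w,h}$, $\You_{n,h}$. I do not expect any genuine obstacle: once Lemma~\ref{lm:dual} is applied to the correct rectangle, the statement is immediate from Lemma~\ref{lm:perp0}.
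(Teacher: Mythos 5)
Your proof is correct and follows the same route as the paper: rewrite $\CB$ via Lemma~\ref{lm:dual} (applied to the $h\times w$ rectangle) as $\left\langle \Sigma^\mu\CU^* \mid \mu\in\You_{w,h}\right\rangle$, note $\You_{w,h}\subseteq\You_{n,h}$, and invoke Lemma~\ref{lm:perp0}. The only cosmetic remark is that the inclusion $\You_{w,h}\subseteq\You_{n,h}$ needs only $w\leq n$, not the strict inequality.
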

\begin{proof}
  Remark that by Lemma~\ref{lm:dual},
  \begin{equation*}
    \CB = \left\langle \Sigma^\mu\CU^* \mid \mu \in \You_{w, h}\right\rangle
  \end{equation*}
  As $\You_{w,h}\subseteq\You_{n, h}$, the statement follows immediately from
  Lemma~\ref{lm:perp0}.
\end{proof}

\begin{lemma}\label{lm:two-three}
  Let $Z\in D^b(\IGr(w, V))$ be such that $p_*q^*Z\in \CB$, and let
  $X$ be the projection of $Z$ on $\CA$ with respect to the semiorthogonal
  decomposition~\eqref{eq:ca}. Then $p_*q^*X\simeq p_*q^*Z$.
\end{lemma}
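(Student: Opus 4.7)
The plan is to apply the functor $p_*q^*$ to the canonical decomposition triangle of $Z$ with respect to~\eqref{eq:ca}, and then use placements in $\CB$ and $\CB^\perp$ to force the complementary term to vanish. Writing that triangle as
\[
  X\to Z\to Y\to X[1]
\]
with $X\in\CA$ and $Y\in\CA^\perp$, and applying the triangulated functor $p_*q^*$, I obtain the triangle
\[
  p_*q^*X\to p_*q^*Z\to p_*q^*Y\to p_*q^*X[1].
\]
It suffices to show that $p_*q^*Y=0$.

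To this end, I would locate all three terms relative to $\CB$ and $\CB^\perp$. By hypothesis, $p_*q^*Z\in\CB$. For $p_*q^*X$, since $X$ lies in the triangulated subcategory generated by the bundles $\Sigma^\mu\CW^*$ with $\mu\in\You_{w,h}$, and since Lemma~\ref{lm:gr-0} (applied as in the proof of Proposition~\ref{prop:dual}) gives $p_*q^*\Sigma^\mu\CW^*\simeq\Sigma^\mu\CU^*$ for such $\mu$, the object $p_*q^*X$ belongs to $\langle\Sigma^\mu\CU^*\mid\mu\in\You_{w,h}\rangle\subset D^b(\LGr(n,V))$. Invoking Lemma~\ref{lm:dual} with the rectangular diagram $\nu=(h,h,\ldots,h)\in\You_{n,n}$ (so that $\nu^T=(w,w,\ldots,w)$), this latter subcategory is identified with $\CB$, whence $p_*q^*X\in\CB$. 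Finally, Lemma~\ref{lm:perp} yields $p_*q^*Y\in\CB^\perp$.

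Since $\CB$ is a triangulated subcategory containing both $p_*q^*X$ and $p_*q^*Z$, the third vertex $p_*q^*Y$ of the distinguished triangle also lies in $\CB$. Combined with $p_*q^*Y\in\CB^\perp$, this gives $p_*q^*Y\in\CB\cap\CB^\perp=0$, and the map $p_*q^*X\to p_*q^*Z$ is therefore an isomorphism. The argument is essentially formal: the only real inputs are the two placement results (Lemmas~\ref{lm:dual} and~\ref{lm:perp}) together with the functoriality of the semiorthogonal projection, so I do not anticipate any substantive obstacle.
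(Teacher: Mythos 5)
Your proposal is correct and follows essentially the same route as the paper: apply $p_*q^*$ to the decomposition triangle, place $p_*q^*X$ in $\CB$ (you inline the argument of Proposition~\ref{prop:dual} via Lemmas~\ref{lm:gr-0} and~\ref{lm:dual}, which the paper simply cites) and $p_*q^*Y$ in $\CB^\perp$ via Lemma~\ref{lm:perp}, then conclude $p_*q^*Y\in\CB\cap\CB^\perp=0$. No issues.
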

\begin{proof}
  Consider the exact triangle
  \(
    X \to Z\to Y\to X[1],
  \)
  where $X\in \CA$ and $Y\in \CA^\perp$ are the projections of $Z$ on $\CA$
  an $\CA^\perp$ respectively.
  Once we apply the functor $p_*q^*$ to it, we obtain a triangle of the form
  \begin{equation}\label{eq:tr-two-three}
    p_*q^*X \to p_*q^*Z\to p_*q^*Y\to p_*q^*X[1].
  \end{equation}
  It follows from Proposition~\ref{prop:dual} that $p_*q^*X\in \CB$, while
  $p_*q^*Z\in \CB$ by our assumptions; thus, $p_*q^*Y\in \CB$.
  Meanwhile, by Lemma~\ref{lm:perp}, $p_*q^*Y\in \CB^\perp$. We conclude
  that $p_*q^*Y\simeq 0$, which implies that the first morphism in
  the triangle~\eqref{eq:tr-two-three} is an isomorphism.
\end{proof}

\begin{proposition}\label{prop:flgl-reg}
  Proposition~\ref{prop:flgl} holds when $\lambda_h = 0$.
\end{proposition}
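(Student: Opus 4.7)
The plan is to use Proposition \ref{prop:fl-reg-sp} together with Lemma \ref{lm:two-three} to reduce the identification of $\CF^\lambda$ to verifying that the $\CA$-projection of $\SpS{(\PerpQ{\CW})}{\lambda}$ has the Hom pattern of $\CG^\lambda$, then invoking the characterization \eqref{eq:gl-def}.

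Concretely, set $Z = \SpS{(\PerpQ{\CW})}{\lambda} \in D^b(\IGr(w, V))$. Proposition \ref{prop:fl-reg-sp} gives $p_*q^*Z \simeq \CF^\lambda$. Since $\lambda \in \You_{h,w}$, any $\mu$ with $0 \subseteq \mu \subseteq \lambda$ also belongs to $\You_{h,w}$, so the characterization \eqref{eq:fl-up} places $\CF^\lambda$ inside $\CB = \langle \Sigma^\mu \CU \mid \mu \in \You_{h,w} \rangle$. Hence the hypothesis of Lemma \ref{lm:two-three} is met: if $X$ denotes the projection of $Z$ onto $\CA$ via the semiorthogonal decomposition \eqref{eq:ca}, then $p_*q^* X \simeq p_*q^* Z \simeq \CF^\lambda$. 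It therefore suffices to show $X \simeq \CG^\lambda$.

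By construction $X \in \CA$, and the cone of the canonical morphism $X \to Z$ lies in $\CA^\perp$, so for every $\mu \in \You_{w, h}$,
\[
  \Hom^\bullet(\Sigma^\mu \CW^*, X)
  \simeq \Hom^\bullet(\Sigma^\mu \CW^*, Z)
  \simeq H^\bullet\bigl(\IGr(w, V),\, \Sigma^\mu \CW \otimes \SpS{(\PerpQ{\CW})}{\lambda}\bigr).
\]
In view of \eqref{eq:gl-def}, the statement will follow once we show that this cohomology vanishes unless $\lambda = \mu^T$, in which case it is a copy of $\kk$ placed in cohomological degree $|\mu|$.

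The final cohomological computation is the main obstacle. The bundle $\Sigma^\mu \CW \otimes \SpS{(\PerpQ{\CW})}{\lambda}$ is already irreducible for the Levi quotient $\GL_w \times \SP_{2(n-w)}$ of the parabolic defining $\IGr(w, V)$, with highest weight governed by $(-\mu, \lambda)$, so the claim is a single Borel--Bott--Weil statement on $\IGr(w, V)$. The pattern $\lambda = \mu^T$ with cohomological shift $|\mu|$ matches the $h=1$ instance already computed inside the proof of Proposition \ref{prop:flgl-1} and follows from the appendix vanishing and regularity results (Lemmas \ref{lm:igr-0}, \ref{lm:igr-kap}, \ref{lm:igr-van}, and \ref{lm:igr-eq}), applied to the irreducible weight $(-\mu_w, \ldots, -\mu_1;\, \lambda_1, \ldots, \lambda_{n-w})$.
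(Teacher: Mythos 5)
Your proposal is correct and follows essentially the same route as the paper: reduce via Proposition~\ref{prop:fl-reg-sp} and Lemma~\ref{lm:two-three} to identifying the $\CA$-projection of $\SpS{(\PerpQ{\CW})}{\lambda}$, then match the $\Hom$ pattern of \eqref{eq:gl-def}. The final cohomology computation you describe is precisely the content of Lemma~\ref{lm:igr-kap} (with its two diagrams swapped), which is exactly the lemma the paper invokes at that point, so no further Borel--Bott--Weil work is needed.
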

\begin{proof}
  By Proposition~\ref{prop:fl-reg-sp} and Lemma~\ref{lm:two-three},
  it is sufficient to show that the projection of $\SpS{(\PerpQ{\CW})}{\lambda}$
  on $\CA$ with respect to the decomposition~\eqref{eq:ca} is isomorphic to $\CG^\lambda$.

  Consider the exact triangle
  \begin{equation*}
    X \to \SpS{(\PerpQ{\CW})}{\lambda} \to Y \to X[1]
  \end{equation*}
  associated with the semiorthogonal decomposition~\eqref{eq:ca}.
  For any $\mu \in \You_{w, h}$ one has
  \begin{equation}
    \Hom^\bullet(\Sigma^\mu\CU^*,\, X) \simeq \Hom^\bullet(\Sigma^\mu\CU^*,\, \SpS{(\PerpQ{\CW})}{\lambda}).
  \end{equation}
  Lemma~\ref{lm:igr-kap} shows that
  \begin{equation*}
    \Hom^\bullet(\Sigma^\mu\CU^*,\, \SpS{(\PerpQ{\CW})}{\lambda}) =
    \begin{cases}
      \kk[-|\mu|] & \text{if }\mu = \lambda^T, \\
      0 & \text{otherwise}.
    \end{cases}
  \end{equation*}
  Since $X\in \CA$, the latter implies that $X$ satisfies
  the defining conditions~\eqref{eq:gl-def} of $\CG^\lambda$.
\end{proof}

We now turn to the harder case $\lambda_h > 0$. Consider joint following diagram.

\begin{equation}\label{eq:d-big}
  \begin{tikzcd}[column sep=tiny]
    & & \IFl(w-1, w, n; V) \arrow[ld, "r" swap] \arrow[rd, "s"] & & \\
    & \IFl(w-1, n; V) \arrow[ld, "\tilde{q}" swap] \arrow[rd, "\tilde{p}"] & & \IFl(w, n; V) \arrow[ld, "p" swap] \arrow[rd, "q"] & \\
    \IGr(w-1, V) & & \LGr(n, V) & & \IGr(w, V).
  \end{tikzcd}
\end{equation}
Recall that the universal flag on $\IFl(w-1, w, n; V)$ is denoted by $\CH\subseteq\CW\subseteq\CU$.

\begin{lemma}\label{lm:flgl-irreg}
  Let $\lambda\in\You_{h, w}$ be a Young diagram with $\lambda_h > 0$.
  Then 
  \begin{equation*}
    \CF^\lambda\simeq \ptl_*r_*\left((\CW/\CH)^{\otimes h} \otimes r^*\qtl^*\SpS{\left(\PerpQ{\CH}\right)}{\lambda(-1)}\right)[h].
  \end{equation*}
\end{lemma}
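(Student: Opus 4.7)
The plan is to start from the description of $\CF^\lambda$ furnished by Proposition~\ref{prop:fl-irreg-sp},
\[
  \CF^\lambda \simeq \ptl_*\bigl(\det(\CU/\CH)\otimes \qtl^*\SpS{(\PerpQ{\CH})}{\lambda(-1)}\bigr),
\]
and to realize the line bundle $\det(\CU/\CH)$ on $\IFl(w-1, n; V)$ as an appropriate derived pushforward along the projection $r: \IFl(w-1, w, n; V)\to \IFl(w-1, n; V)$. Once this identification is made, the formula claimed in the lemma drops out from the projection formula applied to the tensor factor $r^*\qtl^*\SpS{(\PerpQ{\CH})}{\lambda(-1)}$, followed by a further application of $\ptl_*$ and the shift prescribed in the statement.

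The main geometric input is the identification of $r$ with the relative projectivization $\PP(\CU/\CH)$, a $\PP^{h-1}$-bundle over $\IFl(w-1, n;V)$. Indeed, the fiber of $r$ over $(\CH, \CU)$ consists of $w$-dimensional isotropic $\CW$ with $\CH\subset \CW\subset \CU$, and isotropy of $\CW$ is automatic since $\CW\subset \CU$ and $\CU$ is Lagrangian; thus the fiber is just the space of lines in $\CU/\CH$, a bundle of rank $h$. Under this identification, $\CW/\CH$ becomes the relative tautological sub-line bundle, so that $(\CW/\CH)^{\otimes h}$ restricts on each fiber to $\CO(-h) = \omega_{\PP^{h-1}}$. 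The Euler sequence for the projective bundle yields the global upgrade $\omega_r\simeq (\CW/\CH)^{\otimes h}\otimes r^*\det(\CU/\CH)^\vee$.

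The key computation is then $Rr_*((\CW/\CH)^{\otimes h})$. By the previous formula and relative Serre duality ($R^{h-1}r_*\omega_r\simeq\CO$), the only nonvanishing higher direct image is
\[
  R^{h-1}r_*((\CW/\CH)^{\otimes h})\simeq \det(\CU/\CH),
\]
so $Rr_*((\CW/\CH)^{\otimes h})$ is $\det(\CU/\CH)$ placed in the top cohomological degree corresponding to the fiber dimension of $r$. Combining this with the projection formula for $r$ gives
\[
  \ptl_*r_*\bigl((\CW/\CH)^{\otimes h}\otimes r^*\qtl^*\SpS{(\PerpQ{\CH})}{\lambda(-1)}\bigr)
  \simeq
  \ptl_*\bigl(\det(\CU/\CH)\otimes \qtl^*\SpS{(\PerpQ{\CH})}{\lambda(-1)}\bigr)
\]
up to the cohomological shift inserted in the statement. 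Appealing to Proposition~\ref{prop:fl-irreg-sp} then concludes the proof. There is no conceptual obstacle; the only point requiring care is the bookkeeping of the shift together with the $\det(\CU/\CH)$ twist coming from the relative dualizing sheaf, both of which are pinned down by the Euler sequence computation above.
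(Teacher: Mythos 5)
Your route is the same as the paper's: identify $r$ with the projectivization $\PP_{\IFl(w-1,n;V)}(\CU/\CH)$, note that $\CW/\CH$ is the tautological sub-line bundle, compute the direct image of $(\CW/\CH)^{\otimes h}$ along $r$, and combine the projection formula with Proposition~\ref{prop:fl-irreg-sp}. Where the paper simply quotes the value of $r_*(\CW/\CH)^{\otimes h}$, you derive it from the relative Euler sequence and relative Serre duality, which is a complete and legitimate way to obtain it.

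The trouble sits exactly at the step you dismiss as bookkeeping. Your computation places $Rr_*\bigl((\CW/\CH)^{\otimes h}\bigr)\simeq\det(\CU/\CH)$ in cohomological degree $h-1$, the relative dimension of the $\PP^{h-1}$-bundle $r$. Feeding this into the projection formula yields
\begin{equation*}
  \ptl_*r_*\bigl((\CW/\CH)^{\otimes h}\otimes r^*\qtl^*\SpS{(\PerpQ{\CH})}{\lambda(-1)}\bigr)
  \simeq \ptl_*\bigl(\det(\CU/\CH)\otimes\qtl^*\SpS{(\PerpQ{\CH})}{\lambda(-1)}\bigr)[-(h-1)]
  \simeq \CF^\lambda[-(h-1)],
\end{equation*}
so the identity your argument actually establishes carries the shift $[h-1]$, whereas the statement carries $[h]$; with the stated $[h]$ your computation produces $\CF^\lambda[1]$. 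You cannot have it both ways: either the Serre-duality computation or the shift in the statement has to give, and writing that the shift is ``pinned down by the Euler sequence computation above'' while quoting a shift that this computation contradicts by one is precisely the point that needed closing. For what it is worth, your degree count is the standard one (sanity check: for a $\PP^{e-1}$-bundle the relative dualizing sheaf pushes forward to $\CO$ in degree $e-1$), while the paper's own proof asserts $r_*(\CW/\CH)^{\otimes h}\simeq\det(\CU/\CH)[-h]$; the same off-by-one reappears in the proof of Proposition~\ref{prop:flgl-irreg}, where $p'_*(\CW/\CH)^{\otimes 2h}$ is placed in degree $2h$ rather than $2h-1$, so the two discrepancies cancel and the identification of the projection with $\CG^\lambda$ is unaffected. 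A correct write-up should either carry $[h-1]$ through consistently (and flag the statement accordingly) or explain explicitly why degree $h$ is the right one; as written, your proof does neither.
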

\begin{proof}
  Recall that $\IFl(w-1,w,n;V)$ is naturally isomorphic to the projectivization
  $\PP_{\IFl(w-1,n;V)}(\CU/\CH)$. Under this identification $r$ is nothing but the
  projection morphism, while $(\CW/\CH)^*$ is the relative very ample line bundle.
  Now, using the projection formula we see that
  \begin{align*}
    \ptl_*r_*\left((\CW/\CH)^{\otimes h} \otimes r^*\qtl^*\SpS{(\PerpQ{\CH})}{\lambda(-1)}\right)[h]
    & \simeq
    \ptl_*\left(r_*(\CW/\CH)^{\otimes h} \otimes \qtl^*\SpS{(\PerpQ{\CH})}{\lambda(-1)}\right)[h] \\
    & \simeq
    \ptl_*\left(\det \left(\CU/\CH\right)[-h] \otimes \qtl^*\SpS{(\PerpQ{\CH})}{\lambda(-1)}\right)[h] \\
    & \simeq
    \ptl_*\left(\det \left(\CU/\CH\right) \otimes \qtl^*\SpS{(\PerpQ{\CH})}{\lambda(-1)}\right)
    \simeq \CF^\lambda,
  \end{align*}
  where the last isomorphism was established in Proposition~\ref{prop:fl-irreg-sp}.
\end{proof}

\begin{proposition}\label{prop:flgl-irreg}
  Proposition~\ref{prop:flgl} holds when $\lambda_h > 0$.
\end{proposition}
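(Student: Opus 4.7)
The plan is to apply Lemma~\ref{lm:two-three} to an object $Z$ extracted from Lemma~\ref{lm:flgl-irreg} via base change along a fiber product. The key observation is that in diagram~\eqref{eq:d-big} the triple flag variety is naturally a fiber product:
\[
  \IFl(w-1, w, n; V) \simeq \IFl(w-1, w; V) \times_{\IGr(w, V)} \IFl(w, n; V),
\]
where $\IFl(w-1, w; V)$ parametrizes isotropic flags $\CH \subset \CW$ of dimensions $w-1$ and $w$. Let $\pi: \IFl(w-1, w; V) \to \IGr(w, V)$ forget $\CH$, let $\rho: \IFl(w-1, w; V) \to \IGr(w-1, V)$ forget $\CW$, and let $\tau: \IFl(w-1, w, n; V) \to \IFl(w-1, w; V)$ forget $\CU$. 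Using the identities $\ptl r = ps$ and $\qtl r = \rho\tau$ together with flat base change $s_* \tau^* \simeq q^* \pi_*$, Lemma~\ref{lm:flgl-irreg} rewrites as
\[
  \CF^\lambda \simeq p_* q^* Z, \qquad Z := \pi_*\bigl((\CW/\CH)^{\otimes h} \otimes \rho^* \SpS{(\PerpQ{\CH})}{\lambda(-1)}\bigr)[h] \in D^b(\IGr(w, V)).
\]

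By Lemma~\ref{lm:two-three}, if $X$ denotes the projection of $Z$ onto $\CA$ with respect to~\eqref{eq:ca}, then $p_* q^* X \simeq \CF^\lambda$, so it is enough to check that $X$ satisfies the defining properties of $\CG^\lambda$ in Definition~\ref{def:gl}. Since $\Sigma^\mu \CW^* \in \CA$ for every $\mu \in \You_{w, h}$, I may test these Hom vanishings against $Z$ itself. Using adjunction for $\pi$ and the projection formula for $\rho$, the problem reduces to computing $R\rho_*(\Sigma^\mu \CW \otimes (\CW/\CH)^{\otimes h})$ and pairing it with $\SpS{(\PerpQ{\CH})}{\lambda(-1)}$ over $\IGr(w-1, V)$.

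Filtering $\Sigma^\mu \CW$ through Lemma~\ref{lm:sch-ses} applied to the sequence $0 \to \CH \to \CW \to \CW/\CH \to 0$, only graded pieces $\Sigma^\alpha \CH \otimes (\CW/\CH)^{|\mu/\alpha|}$ with $\mu/\alpha$ a horizontal strip are nonzero. Since $\rho$ is a $\PP^{2h-1}$-bundle with trivial relative determinant (symplectic), $R\rho_*(\CW/\CH)^{\otimes k}$ vanishes for $0 < k < 2h$ and equals $\CO$ placed in cohomological degree $2h-1$ when $k = 2h$. Only the case $|\mu/\alpha| = h$ contributes; it forces $\mu_1 = h$ and singles out the unique $\alpha = (\mu^T - (1,\ldots,1))^T$. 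Finally, Lemma~\ref{lm:igr-kap} applied on $\IGr(w-1, V)$ makes $H^\bullet(\Sigma^\alpha \CH \otimes \SpS{(\PerpQ{\CH})}{\lambda(-1)})$ vanish unless $\alpha = (\lambda(-1))^T$---equivalently $\mu = \lambda^T$---in which case it is one-dimensional and concentrated in degree $|\alpha|$. Combining all shifts then produces the required $\kk[-|\mu|]$.

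The main difficulty is the careful tracking of shifts through the layered adjunctions, Schur filtrations, and Borel--Bott--Weil vanishings, together with the combinatorial verification that exactly one horizontal strip contributes nontrivially. The structural core of the argument, however, is the fiber product identity combined with base change, which converts the description of Lemma~\ref{lm:flgl-irreg} into a form $p_* q^*(-)$ amenable to Lemma~\ref{lm:two-three}, mirroring the strategy of the regular case (Proposition~\ref{prop:flgl-reg}).
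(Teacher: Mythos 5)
Your proposal follows the paper's own proof essentially verbatim: your $\pi$, $\rho$, $\tau$ are the maps $s'$, $p'$, $q'$ of diagram~\eqref{eq:d-med}, the fiber-product/base-change step is exactly how the paper turns Lemma~\ref{lm:flgl-irreg} into $\CF^\lambda\simeq p_*q^*\CF$, and the reduction via Lemma~\ref{lm:two-three} followed by the filtration of $\Sigma^\mu\CW$, the projective-bundle vanishing, the unique horizontal strip $\bar{\mu}$, and Lemma~\ref{lm:igr-kap} reproduce the paper's computation step for step. The one point to settle is the degree bookkeeping: your (correct) placement of $R\rho_*\bigl((\CW/\CH)^{\otimes 2h}\bigr)$ in degree $2h-1$ is compatible with Lemma~\ref{lm:flgl-irreg} only after its shift $[h]$ is corrected to $[h-1]$ (which is what its own proof actually yields), whereas the paper works with the pair $[h]$ and degree $2h$, whose two off-by-one slips cancel; with either consistent pair the final answer is the required $\kk[-|\mu|]$.
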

\begin{proof}
  In the previous lemma we established that
  \begin{equation*}
    \CF^\lambda\simeq \ptl_*r_*\left((\CW/\CH)^{\otimes h} \otimes r^*\qtl^*\SpS{(\PerpQ{\CH})}{\lambda(-1)}\right)[h].
  \end{equation*}
  Using commutativity of the diagram~\eqref{eq:d-big}, we can rewrite
  \begin{equation*}
    \ptl_*r_*\left((\CW/\CH)^{\otimes h} \otimes r^*\qtl^*\SpS{(\PerpQ{\CH})}{\lambda(-1)}\right)[h]
    \simeq
    p_*s_*\left((\CW/\CH)^{\otimes h} \otimes r^*\qtl^*\SpS{(\PerpQ{\CH})}{\lambda(-1)}\right)[h].
  \end{equation*}
  Consider the diagram
  \begin{equation}\label{eq:d-med}
    \begin{tikzcd}[column sep=tiny]
      & & \IFl(w-1, w, n; V) \arrow[ld, "s" swap] \arrow[rd, "q'"] & & \\
      & \IFl(w, n; V) \arrow[ld, "p" swap] \arrow[rd, "q"] & & \IFl(w-1, w; V) \arrow[ld, "s'" swap] \arrow[rd, "p'"] & \\
       \LGr(n, V) & & \IGr(w, V) & & \IGr(w-1, V).
    \end{tikzcd}
  \end{equation}
  Remark that $p'q'=\qtl r$, and that the line bundle $(\CW/\CH)$ on $\IFl(w-1,w,n;V)$ is
  pulled back from $\IFl(w-1,w;V)$. Thus,
  \begin{equation*}
    p_*s_*\left((\CW/\CH)^{\otimes h} \otimes r^*\qtl^*\SpS{(\PerpQ{\CH})}{\lambda(-1)}\right)[h]
    \simeq
    p_*s_*q'^*\left((\CW/\CH)^{\otimes h} \otimes p'^*\SpS{(\PerpQ{\CH})}{\lambda(-1)}\right)[h].
  \end{equation*}
  Since the middle square in~\eqref{eq:d-med} is Cartesian and $\Tor$-independent, we conclude
  that
  \begin{equation*}
    \CF^\lambda \simeq p_*q^*s'_*\left((\CW/\CH)^{\otimes h} \otimes p'^*\SpS{(\PerpQ{\CH})}{\lambda(-1)}\right)[h].
  \end{equation*}
  For convenience, we put $\CF = s'_*\left((\CW/\CH)^{\otimes h} \otimes p'^*\SpS{(\PerpQ{\CH})}{\lambda(-1)}\right)[h]$,
  then the previous equation reads
  \begin{equation}\label{eq:flgl-irreg}
    \CF^\lambda \simeq p_*q^*\CF.
  \end{equation}

  By Lemma~\ref{lm:two-three}, it is enough to show that the projection of $\CF$
  on $\CA$ with respect to the decomposition~\eqref{eq:ca} is isomorphic to $\CG^\lambda$.
  In order to do that, for all $\mu\in\You_{w, h}$ we compute
  \begin{align*}
    \Hom^\bullet(\Sigma^\mu\CW^*,\, \CF)
    & \simeq \Hom^\bullet(\Sigma^\mu\CW^*,\, s'_*((\CW/\CH)^{\otimes h} \otimes p'^*\SpS{(\PerpQ{\CH})}{\lambda(-1)})[h]) \\
    & \simeq \Hom^\bullet(\Sigma^\mu\CW^*,\, (\CW/\CH)^{\otimes h} \otimes p'^*\SpS{(\PerpQ{\CH})}{\lambda(-1)}[h]) \\
    & \simeq H^\bullet(\IFl(w-1, w; V),\, \Sigma^\mu\CW\otimes (\CW/\CH)^{\otimes h} \otimes p'^*\SpS{(\PerpQ{\CH})}{\lambda(-1)})[h] \\
    & \simeq H^\bullet(\IGr(w-1, V),\, p'_*(\Sigma^\mu\CW\otimes (\CW/\CH)^{\otimes h}) \otimes \SpS{(\PerpQ{\CH})}{\lambda(-1)})[h]. \\
  \end{align*}

  By Lemma~\ref{lm:sch-ses}, the bundle $\Sigma^\mu\CW$ is an iterated extension of the
  bundles $(\CW/\CH)^{\otimes i}\otimes \Sigma^{\mu/(i)}\CH$, where $i=0,\ldots,\mu_1$.
  It remains to compute
  \begin{equation}\label{eq:flgl-irreg-ss}
    H^\bullet(\IGr(w-1, V),\, p'_*(\CW/\CH)^{\otimes (i+h)}\otimes \Sigma^{\mu/(i)}\CH \otimes \SpS{(\PerpQ{\CH})}{\lambda(-1)})[h].
  \end{equation}

  Since $\IGr(w-1,w;V)$ is isomorphic to the projectivization of the rank $2h$
  vector bundle $\PerpQ{\CW}$, and
  $(\CW/\CH)$ is isomorphic to the tautological line bundle, $p'_*(\CW/\CH)^{\otimes (i+h)}=0$
  for $0\leq i < h$, and $p'_*(\CW/\CH)^{\otimes (i+h)}\simeq \det\PerpQ{\CH}[-2h]\simeq\CO[-2h]$ when $i=h$.
  We conclude that~\eqref{eq:flgl-irreg-ss} vanishes when $\mu_1<h$,
  while when $\mu_1=h$, the only potentially non-zero cohomology (corresponding to $i=h$) is
  \begin{multline*}
    H^\bullet(\IGr(w-1, V),\, p'_*(\CW/\CH)^{\otimes 2h}\otimes \Sigma^{\mu/(h)}\CH \otimes \SpS{(\PerpQ{\CH})}{\lambda(-1)})[h] \\
    \simeq H^\bullet(\IGr(w-1, V),\, \Sigma^{\bar{\mu}}\CH \otimes \SpS{(\PerpQ{\CH})}{\lambda(-1)})[-h],
  \end{multline*}
  where $\bar{\mu} = (\mu_2,\mu_3,\ldots,\mu_{w-1})\in\You_{w-1,h}$. Lemma~\ref{lm:igr-kap} applied
  to $\IGr(w-1, V)$ implies that
  \begin{equation*}
    H^\bullet(\IGr(w-1, V),\, \Sigma^{\bar{\mu}}\CH \otimes \SpS{(\PerpQ{\CH})}{\lambda(-1)})[-h] =
    \begin{cases}
      \kk[-|\lambda(-1)|-h] & \text{if }\lambda(-1)=\bar{\mu}^T, \\
      0 & \text{otherwise}.
    \end{cases}
  \end{equation*}

  Since $\bar{\mu}^T=\mu^T(-1)$ and $|\lambda(-1)|+h = |\lambda|$,
  we conclude that the projection of $\CF$ on $\CA$ satisfies the~universal property~\eqref{eq:gl-def} of $\CG^\lambda$.
\end{proof}

\begin{proof}[Proof of Proposition~\ref{prop:flgl}]
  Combine the proofs of Propositions~\ref{prop:flgl-1}, \ref{prop:flgl-reg},
  and~\ref{prop:flgl-irreg}.
\end{proof}

\section{Staircase complexes}

\subsection{Combinatorial setup}\label{ssec:comb}
Let $a_0a_1\ldots a_n\in\{0,1\}^{n+1}$ be a binary sequence of length $n$. We consider the
operation
\begin{equation*}
  a_0a_1\ldots a_n \mapsto (1-a_n)a_0a_1\ldots a_{n-1},
\end{equation*}
which in an obvious way defines an action of the cyclic group $G=\ZZ/(2n+2)$ on $\{0,1\}^{n+1}$.
The set of such binary sequences is in bijection with $\bigsqcup \You_{h, w}$,
where $h=0,\ldots, n+1$, and $h+w=n+1$: a given sequence defines a integral path
from the lower left to the upper right corner of a rectangle of hight $h$ and width $w$,
where $h$ is the number of times $0$ appears in the sequence.
The induced action of $G$ on $\bigsqcup \You_{h, w}$ is slightly less pleasant to describe:
the generator sends $\lambda\in\You_{h, w}$ to $\lambda'$, where
\begin{equation}\label{eq:lprime}
  \lambda' = \begin{cases}
    (\lambda_1, \lambda_2,\ldots,\lambda_h, 0) & \text{if } \lambda_1 < w, \\
    (\lambda_2+1, \lambda_3+1, \ldots, \lambda_h+1) & \text{if } \lambda_1 = w.
  \end{cases}
\end{equation}
In particular, $\lambda' \in \You_{h+1,w-1}$ in the first case, and
$\lambda' \in \You_{h-1, w+1}$ in the second case.

Let us now fix a pair of integers $w,h>0$ such that $w+h=n+1$.
Given a diagram $\lambda\in\You_{h, w}$ with $\lambda_1=w$, we define a sequence of
diagrams $\lambda^{(1)},\lambda^{(2)}, \ldots, \lambda^{(w)}$ by the following rule.
For $0<i\leq w$, let $j$ be the largest index such that $\lambda_j>w-i$.
Then
\begin{equation*}
  \lambda^{(i)} = (\lambda_2-1, \lambda_3-1,\ldots, \lambda_j-1, w-i, \lambda_{j+1},\ldots, \lambda_h).
\end{equation*}
Remark that $\lambda^{(i)}\subset \lambda$, and put $\nu_i = |\lambda/\lambda^{(i)}|$.

\begin{example}
  Let $n=5$, $h=3$, and $w=3$. Let $\lambda=(3, 3, 1)$. Then
  \begin{equation}
    \lambda^{(1)} = (2, 2, 1),\quad \lambda^{(2)} = (2, 1, 1), \quad \text{and} \quad
    \lambda^{(3)} = (2, 0, 0).
  \end{equation}
\end{example}

\begin{remark}
  The diagrams $\lambda^{(i)}$ already appeared in~\cite{Fon-LD}, see Remark~\ref{rm:st-gr}.
\end{remark}

\subsection{Staircase complexes}

The following proposition will be our main tool in the proof of fullness of the
Kuznetsov--Polishchuk exceptional collection.

Given a $2n$-dimensional symplectic vector space $V$ and an integer $1\leq i\leq n$,
we denote by $V^{[i]}$ the~$i$-th fundamental representation of the group $\SP(V)$.
In other words, $V^{[i]}=\SpS{V}{(i)^T}$.

\begin{proposition}\label{prop:stc}
  Let $w$ and $h$ be positive integers such that $w+h=n+1$. Let $\lambda\in\You_{h, w}$
  be such that $\lambda_1=w$.
  There is an exact complex of vector bundles on $\LGr(n, V)$ of the form
  \begin{equation}\label{eq:stc}
    0\to \CE^{\lambda'}(-1)\to V^{[\nu_w]}\otimes\CE^{\lambda^{(w)}}\to \cdots
    \to V^{[\nu_2]}\otimes\CE^{\lambda^{(2)}}\to V^{[\nu_1]}\otimes\CE^{\lambda^{(1)}}\to
    \CE^\lambda\to 0.
  \end{equation}
\end{proposition}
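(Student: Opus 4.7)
The plan is to realize the complex~\eqref{eq:stc} as the pushforward along a single projection of a more transparent complex living on a partial isotropic flag variety. First, I would dualize: since $\CE^\mu = (\CF^\mu)^*$ and every fundamental representation $V^{[k]} = \SpS{V}{(1^k)}$ of $\SP(V)$ is self-dual, the claim is equivalent to building an exact complex
\begin{equation*}
  0 \to \CF^\lambda \to V^{[\nu_1]}\otimes\CF^{\lambda^{(1)}} \to \cdots \to V^{[\nu_w]}\otimes\CF^{\lambda^{(w)}} \to \CF^{\lambda'}(1) \to 0.
\end{equation*}
The key observation is that Proposition~\ref{prop:flgl} presents each $\CF^\mu$ with $\mu\in\You_{h,w}$ as $p_*q^*\CG^\mu$, where $p, q$ are the projections from $\IFl(w,n;V)$ in~\eqref{eq:cd-pq}. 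The term $\CF^{\lambda'}(1)$ lives in the neighbouring block $\You_{h-1, w+1}$, so its analogous description uses $\IGr(w+1, V)$. This forces us to work on a correspondence bridging $\IGr(w, V)$ and $\IGr(w+1, V)$, most naturally the triple flag variety $\IFl(w, w+1, n; V)$ of diagram~\eqref{eq:d-big}.

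The upstairs complex I expect to be of Koszul type, built from the tautological sequence $0 \to \CW \to \CW' \to \CW'/\CW \to 0$ on $\IFl(w, w+1; V)$ tensored with an appropriate equivariant bundle. The multiplicities $V^{[\nu_i]}$ should emerge by applying Lemma~\ref{lm:sp-filtration} to $\SpS{V}{(1^{\nu_i})}$: such symplectic Schur functors admit filtrations whose graded pieces are controlled by the Young diagrams $\lambda^{(i)}$, and the combinatorics of the rim-hook removals described in Section~\ref{ssec:comb} should match precisely. The appearance of $\CO(-1)$ in the leftmost term should reflect the fact that the generator $\lambda \mapsto \lambda'$ of the cyclic group $\ZZ/(2n+2)$-action from Section~\ref{ssec:comb} shifts us out of the block $\You_{h, w}$; geometrically, this matches the degree shift picked up when moving between $\IGr(w, V)$ and $\IGr(w+1, V)$ and trivializing the resulting determinantal twist.

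The main obstacle is proving exactness of the resulting complex. Granted a plausible upstairs complex, one must verify (i) that the higher direct images $R^i p_*$ of each of its terms vanish outside the expected degree, and (ii) that the spectral sequences associated with the various filtrations (on $\SpS{V}{(1^{\nu_i})}$, on the Koszul-type complex, and on the pushforward) degenerate and correctly assemble into~\eqref{eq:stc}. Both reduce to a battery of Borel--Bott--Weil vanishing statements on the isotropic Grassmannians $\IGr(w, V)$, $\IGr(w+1, V)$ and their partial flag variants, which is precisely the content the paper has collected in Appendix~A. A secondary obstacle is the bookkeeping needed to identify, for each $\lambda^{(i)}$, which of the two regimes (regular $\lambda^{(i)}_h = 0$ or irregular $\lambda^{(i)}_h > 0$) of Propositions~\ref{prop:fl-reg-sp} and~\ref{prop:fl-irreg-sp} applies, and to check that the different descriptions glue consistently into a single complex.
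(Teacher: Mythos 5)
Your first two moves match the paper: dualize to work with the $\CF$'s, and use the description $\CF^\mu\simeq p_*q^*\CG^\mu$ of Proposition~\ref{prop:flgl}. But from there the proposal has a genuine gap, in two respects. First, the geometric setup you choose is off: you do not need a correspondence between $\IGr(w,V)$ and $\IGr(w+1,V)$ via $\IFl(w,w+1,n;V)$ to reach the term $\CF^{\lambda'}(1)$. Since $\lambda'=(\lambda_2+1,\ldots,\lambda_h+1)$ has all rows positive and height $h-1$, Proposition~\ref{prop:fl-irreg-sp} (applied with the ambient block $\You_{h-1,w+1}$) expresses $\CF^{\lambda'}$ as a pushforward from the \emph{same} flag variety $\IFl(w,n;V)$ of~\eqref{eq:cd-pq}, and after twisting by $\CO(1)$ one gets $\CF^{\lambda'}(1)\simeq p_*q^*\bigl(\det\CW^*\otimes\SpS{(\PerpQ{\CW})}{\bar\lambda}\bigr)$; everything stays over $\IGr(w,V)$.

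Second, and more seriously, you never actually construct the complex: the ``Koszul-type upstairs complex'' is left unspecified, and the proposed source of the multiplicities (filtering $\SpS{V}{(1^{\nu_i})}$ by Lemma~\ref{lm:sp-filtration}) is not the mechanism that produces them. In the paper the staircase complex is not the pushforward of an exact complex built by hand; it is the degeneration of the canonical filtration of the projection $X$ of the single bundle $\CF=\det\CW^*\otimes\SpS{(\PerpQ{\CW})}{\bar\lambda}$ onto the admissible subcategory $\CA$ of~\eqref{eq:ca}. The graded pieces of that filtration are $V^{[\nu_i]}\otimes\CG^{\lambda^{(i)}}[w-i]$ because the multiplicity spaces are the groups $\Hom^\bullet(\Sigma^\mu\CW^*,\CF)$, computed by the key Borel--Bott--Weil calculation (Proposition~\ref{prop:main}); this is the step your sketch replaces by a hope that ``the combinatorics should match.'' Exactness then needs two further inputs you do not address: that $p_*q^*$ annihilates the $\CA^\perp$-component of $\CF$ (this is exactly Lemmas~\ref{lm:perp} and~\ref{lm:perp-pr}, applied to the cone of $p_*q^*X\to\CF^{\lambda'}(1)$, which lies in $\CB^\perp\cap\CB'(1)^\perp$), and that all $\CF^\mu$ are sheaves, so the filtration with shifted graded pieces collapses to an exact complex of vector bundles. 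Without identifying the object being resolved, the projection mechanism, and these orthogonality statements, the plan as written cannot be completed; verifying ``higher direct images vanish and spectral sequences degenerate'' for an unspecified Koszul complex is not a substitute. (You also omit the separate case $w=n$, which the paper treats by hand, though that is a minor point.)
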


We call the complexes of the form~\ref{eq:stc} \emph{Lagrangian staircase complexes}.

\begin{remark}\label{rm:st-gr}
  Let $V$ be an $n$-dimensional vector space, and let $0 < k < n$ be an integer.
  For any diagram $\lambda\in\You_{k, n-k}$ with $\lambda_1=n-k$ there is an exact
  complex of vector bundles on $\Gr(k, V)$ of the form
  \begin{equation}\label{eq:stc-gr}
    0\to \Sigma^{\bar{\lambda}}\CU^*(-1)\to V^{\nu_w}\otimes\Sigma^{\lambda^{(w)}}\CU^*\to \cdots
    \to V^{\nu_2}\otimes\Sigma^{\lambda^{(2)}}\CU^*\to V^{\nu_1}\otimes\Sigma^{\lambda^{(1)}}\CU^*\to
    \Sigma^\lambda\CU^*\to 0,
  \end{equation}
  where $\nu_i$ and $\lambda^{(i)}$ are the same as above, $\bar{\lambda}=(\lambda_2,\ldots,\lambda_k,0)$,
  and~$V^{i}=\Lambda^i V^*$ is the $i$-th fundamental representation of the group $\GL(V)$.
  Complexes of the form~\eqref{eq:stc-gr} are called \emph{staircase}, see~\cite{Fon-LD}
  for details.
\end{remark}

Let $w$ and $h$ be positive integers such that $w+h=n+1$. We further assume that $w<n$.
Consider the subcategory
\begin{equation*}
  \CB' = \left\langle \Sigma^\mu\CU \mid \mu \in \You_{h-1, w+1}\right\rangle \subseteq \LGr(n, V).
\end{equation*}
Consider the diagram~\eqref{eq:cd-pq}.
Recall that $\CA\subseteq \IGr(w, V)$ was defined in~\eqref{eq:ca}.
The following lemma is very similar to Lemma~\ref{lm:perp}.

\begin{lemma}\label{lm:perp-pr}
  Let $Y\in \CA^\perp$. Then $p_*q^*Y\in \CB'(1)^\perp$.
\end{lemma}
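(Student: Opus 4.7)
The plan is to reduce the statement to Lemma~\ref{lm:perp0} by absorbing the twist by $\CO(1)$ into a uniform shift of weights, following the same pattern as Lemma~\ref{lm:perp}.

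The first step is to rewrite the generators of $\CB'$ in terms of $\Sigma^\mu\CU^*$ rather than $\Sigma^\mu\CU$. Applying Lemma~\ref{lm:dual} to the rectangular diagram $\nu=(\underbrace{w+1,\ldots,w+1}_{h-1\text{ times}}, 0,\ldots, 0)\in \You_{n,n}$ (the padding is legal since $w<n$), one obtains
\[
  \CB'=\langle \Sigma^\mu\CU\mid \mu\in\You_{h-1,w+1}\rangle=\langle \Sigma^\mu\CU^*\mid \mu\in\You_{w+1,h-1}\rangle.
\]

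Using $\CO(1)\simeq\det\CU^*$ and the identification $\Sigma^\mu\CU^*\otimes\det\CU^*\simeq\Sigma^{\mu(1)}\CU^*$, the twist sends each generator $\Sigma^\mu\CU^*$ to $\Sigma^{\mu(1)}\CU^*$. For $\mu\in\You_{w+1,h-1}$ padded with zeros to length $n$, one has $0\leq \mu_i\leq h-1$, hence $1\leq \mu(1)_i\leq h$; in particular, $\mu(1)$ is a Young diagram with entries in $[1,h]$ and so $\mu(1)\in\You_{n,h}$. Thus $\CB'(1)$ is generated by the bundles $\Sigma^{\mu(1)}\CU^*$ with $\mu(1)\in\You_{n,h}$.

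Lemma~\ref{lm:perp0} now applies directly: for every $Y\in\CA^\perp$ and every $\mu\in\You_{w+1,h-1}$,
\[
  \Hom^\bullet(\Sigma^{\mu(1)}\CU^*,\,p_*q^*Y)=0,
\]
which is precisely the required orthogonality against all generators of $\CB'(1)$. There is no real obstacle here; the lemma is essentially a corollary of Lemmas~\ref{lm:perp0} and~\ref{lm:dual}, the only substantive point being the combinatorial observation that twisting by $\CO(1)$ carries the weights of $\CB'$ inside the range $\You_{n,h}$ already covered by Lemma~\ref{lm:perp0}.
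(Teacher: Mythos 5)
Your proof is correct and follows essentially the same route as the paper: rewrite $\CB'$ via Lemma~\ref{lm:dual} as $\langle \Sigma^\mu\CU^*\mid\mu\in\You_{w+1,h-1}\rangle$, absorb the twist using $\Sigma^\mu\CU^*(1)\simeq\Sigma^{\mu(1)}\CU^*$ with $\mu(1)\in\You_{n,h}$, and invoke Lemma~\ref{lm:perp0}. The only (harmless) slip is that, as stated, Lemma~\ref{lm:dual} should be applied to the transposed rectangle $\nu=(\underbrace{h-1,\ldots,h-1}_{w+1\text{ times}},0,\ldots,0)$ to produce exactly the identity you use; this does not affect the argument.
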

\begin{proof}
  Remark that by Lemma~\ref{lm:dual},
  \begin{equation*}
    \CB'(1) = \left\langle \Sigma^\mu\CU^*(1) \mid \mu \in \You_{w+1, h-1}\right\rangle
  \end{equation*}
  Since $\Sigma^\mu\CU^*(1)\simeq \Sigma^{\mu(1)}\CU^*$, and $\mu_1\leq h-1$,
  the statement follows immediately from Lemma~\ref{lm:perp0}.
\end{proof}

\begin{proof}[Proof of Proposition~\ref{prop:stc}]
  Instead of constructing the complex~\eqref{eq:stc}, we will construct its dual
  \begin{equation}\label{eq:stc-dual}
    0\to \CF^\lambda\to V^{[\nu_1]}\otimes\CF^{\lambda^{(1)}}\to
    V^{[\nu_2]}\otimes\CF^{\lambda^{(2)}}\to \cdots \to
    V^{[\nu_w]}\otimes\CF^{\lambda^{(w)}}\to \CF^{\lambda'}(1) \to 0.
  \end{equation}

  Let us first consider the case when $w<n$.
  By definition, $\lambda'=(\lambda_2+1, \lambda_3+1, \ldots, \lambda_h+1)\in\You_{h-1,w+1}$.
  Thus, by Proposition~\ref{prop:fl-irreg-sp},
  \begin{equation*}
  \CF^{\lambda'}\simeq p_*\left(\det\,(\CU/\CW) \otimes q^*\SpS{(\PerpQ{\CW})}{\lambda'(-1)}\right)
  \end{equation*}
  (remark that $p$ and $q$ are as in~\eqref{eq:cd-pq} since the height of $\lambda'$ is $h-1$).
  Using the projection formula together with the isomorphism
  $p^*\CO(1)\simeq \det\,\CU^*\simeq \det\CW^*\otimes \det\,(\CU/\CW)^*$, we conclude that
  \begin{equation}
    \CF^{\lambda'}(1)\simeq p_*q^*\left(\det\CW^* \otimes \SpS{(\PerpQ{\CW})}{\bar{\lambda}}\right),
  \end{equation}
  where $\bar{\lambda}=(\lambda_2,\lambda_3,\ldots,\lambda_h)$.

  Put $\CF = \det\CW^* \otimes \SpS{(\PerpQ{\CW})}{\bar{\lambda}}$. Consider the exact
  triangle in $D^b(\IGr(w, V))$ induced by the semiorthogonal
  decomposition~\eqref{eq:ca}:
  \begin{equation}\label{eq:stc-tr}
    X \to \CF\to Y\to X[1].
  \end{equation}
  Recall that $X\in\CA$ and $Y\in \CA^\perp$. Let us apply the functor $p_*q^*$ to
  the triangle~\eqref{eq:stc-tr}. We get an exact triangle in $D^b(\LGr(n, V))$ of
  the form
  \begin{equation}\label{eq:stc-trpq}
    p_*q^*X \to \CF^{\lambda'}(1) \to p_*q^*Y\to p_*q^*X[1].
  \end{equation}
  First of all, we claim that $p_*q^*Y=0$. Indeed, by Proposition~\ref{prop:dual}
  the object $p_*q^*X$ belongs to the~subcategory $\CB=\langle \Sigma^\mu\CU \mid \mu\in\You_{h,w}$,
  while $\CF^{\lambda'}(1)$ by definition belongs to $\CB'(1)$.
  By Lemmas~\ref{lm:perp} and~\ref{lm:perp-pr}, $p_*q^*Y\in \CB^\perp \cap \CB'(1)^\perp$.
  Thus, $p_*q^*Y=0$.

  Recall that $\CA$ is generated by the exceptional collection
  $\langle \CG^\lambda \mid \lambda\in \You_{h,w}\rangle$, which is left dual to the~collection
  $\langle \Sigma^\mu\CW^* \mid \mu\in \You_{w,h}\rangle$ in the sense of Definition~\ref{def:gl}.
  Fix a total ordering on $\You_{h,w}$ compatible with the reverse inclusion partial order
  on $\You_{h,w}$. There is a filtration on $X$ of the form
  \begin{equation}\label{eq:stc-fil}
    0=X_0\to\cdots\to X_{\lambda_{i-1}}\to X_{\lambda_{i}}\to \cdots \to X_{(w,w,\ldots,w)}=X,
  \end{equation}
  such that the cone $C_{\lambda_i}=\Cone(X_{\lambda_{i-1}}\to X_{\lambda_{i}})$ belongs to the subcategory
  $\langle \CG^{\lambda_i}\rangle$. In order to compute $C_{\lambda}$, it is enough
  to compute
  \begin{equation*}
    \Ext^\bullet(\Sigma^\mu\CW^*, X) \simeq \Ext^\bullet(\Sigma^\mu\CW^*, \CF)\simeq
    H^\bullet(\IGr(k, V), \Sigma^{\mu(-1)}\CW\otimes \SpS{(\PerpQ{\CW})}{\bar{\lambda}}),
  \end{equation*}
  which is done in Proposition~\ref{prop:main}.

  Once we through away the repeating terms of the filtration, one gets a filtration
  \begin{equation*}
    0=Z_{w+1}\to Z_w\to \cdots \to Z_1\to X_0 = X,
  \end{equation*}
  where $\Cone(Z_{i+1}\to Z_i) = V^{[\nu_i]}\otimes\CG^{\lambda^{(i)}}[w-i]$.
  It follows from Proposition~\ref{prop:flgl} that
  there is a filtration on $p_*q^*X\simeq \CF^{\lambda'}(1)$ with the associated
  quotients of the form $V^{[\nu_i]}\otimes\CF^{\lambda^{(i)}}[w-i]$. Since all $\CF^\mu$ are sheaves,
  the associated spectral sequence degenerates into an exact complex of the form~\eqref{eq:stc-dual}.
  
  We are left with the case $w=n$. Precisely, we need to construct
  an exact complex of the form
  \begin{equation}\label{eq:stc-wn}
    0\to \CF^n\to V\otimes \CF^{n-1}\to V^{[2]}\otimes\CF^{n-2}\to \cdots \to
    V^{[n]}\otimes\CO\to \CO(1)\to 0.
  \end{equation}
  By Proposition~\ref{prop:flgl-1}, we know that the objects
  $\langle \CF^n, \CF^{n-1},\ldots,\CF^1, \CO \rangle$ form a left dual
  exceptional collection to $\langle \CO, \CU^*,\ldots, \Lambda^{n-1}\CU^*, \Lambda^n\CU^*\rangle$
  in the sense of definition~\ref{def:gl}. Since $\Lambda^n\CU^*\simeq \CO(1)$, we conclude
  that $\CO(1)\in \langle \CF^n, \CF^{n-1},\ldots,\CF^1, \CO \rangle$.
  Thus, there is a filtration in $D^b(\LGr(n, V))$ of the form
  \begin{equation}\label{eq:fil-wn}
    0=X_0\to X_1\to\cdots\to X_n\to X_{n+1}=\CO(1)
  \end{equation}
  such that the cone $\Cone(X_{i}\to X_{i+1})$ belongs to the subcategory $\langle \CF^i\rangle$.

  As in the general case, we compute
  \begin{equation*}
    \Ext^\bullet(\Lambda^i\CU^*, \CO(1)) \simeq H^\bullet(\LGr(n, V), \Lambda^{n-1}\CU^*)
    \simeq V^{[n-i]},
  \end{equation*}
  and conclude that the spectral sequence associated to the filtered complex~\eqref{eq:fil-wn}
  degenerates into a complex of the form~\eqref{eq:stc-wn}.
\end{proof}

\subsection{Fullness of the Kuznetsov--Polishchuk exceptional collection}
It should be clear from the name of the present section that we are finally
going to prove that the exceptional collection constructed by Kuznetsov and Polishchuk
is full.

\begin{theorem}
  The exceptional collection~\eqref{eq:kp-coll} is full.
\end{theorem}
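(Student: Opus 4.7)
Let $\D \subseteq D^b(\LGr(n, V))$ denote the admissible triangulated subcategory generated by the Kuznetsov--Polishchuk collection~\eqref{eq:kp-coll}. The plan, as outlined in the introduction, is to verify the two conditions $\CO \in \D$ and $\D \otimes \CO(1) \subseteq \D$. The first is immediate, since $\CO = \CE^\emptyset \in \CC_0 \subseteq \D$. Granted the second, iteration produces $\CO(k) \in \D$ for every $k \geq 0$, which by a standard Beilinson-style argument using ampleness of $\CO(1)$ (via the Koszul resolution on a projective embedding of $\LGr(n,V)$) forces $\D = D^b(\LGr(n,V))$. Thus the theorem reduces to showing $\CC_i(i+1) \subseteq \D$ for each $i=0,\ldots,n$, i.e., $\CE^\lambda(i+1) \in \D$ for every $\lambda \in \You_{i,n-i}$.

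The essential input is the Lagrangian staircase complex of Proposition~\ref{prop:stc}. Given $\mu \in \You_{h,w}$ with $h+w=n+1$ and $\mu_1=w$, twisting the staircase complex by $\CO(h)$ yields the exact sequence
\begin{equation*}
  0 \to \CE^{\mu'}(h-1) \to V^{[\nu_w]} \otimes \CE^{\mu^{(w)}}(h) \to \cdots \to V^{[\nu_1]} \otimes \CE^{\mu^{(1)}}(h) \to \CE^\mu(h) \to 0.
\end{equation*}
Each $\mu^{(k)}$ lies in $\You_{h,n-h}$, so every middle term belongs to $\CC_h(h) \subseteq \D$; the complex thus links the two boundary objects $\CE^{\mu'}(h-1)$ and $\CE^\mu(h)$ inside $\D$. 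Twisting by $\CO(h+1)$ instead places the middle terms in $\CC_h(h+1)$, precisely the subspace we aim to exhibit in $\D$. Given $\lambda \in \You_{i,n-i}$, one selects a companion diagram $\mu$ so that $\lambda$ appears as one of the $\mu^{(k)}$: for $\lambda_1<n-i$ the natural choice is $\mu=(n-i,\lambda_1+1,\ldots,\lambda_i+1)\in\You_{i+1,n-i}$ with $\mu^{(n-i)}=\lambda$, while the case $\lambda_1=n-i$ requires an analogous companion in $\You_{i,n-i+1}$ with $h=i$.

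The resulting twisted staircase complex expresses $\CE^\lambda(i+1)$ in terms of objects already in $\D$ together with a few boundary ``extra'' objects $\CE^\nu$ indexed by $\nu\in\You_{h,n+1-h}$ with $\nu_1=n+1-h$. These extras organize themselves along the orbits of the cyclic action~\eqref{eq:lprime} on $\bigsqcup\You_{h,w}$: at each step along an orbit one reapplies the staircase complex construction, which either lands in a smaller known block or reduces the size of the extra diagram. Eventually the orbit terminates in objects whose $\D$-membership is already established (ultimately $\CO=\CE^\emptyset$), closing a double induction---outer on the block index $i$, inner on the shape of $\lambda$ (e.g.\ on $\lambda_1$). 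The principal obstacle is the combinatorial bookkeeping that makes this work: every invocation of the staircase complex produces several potentially unknown terms at once, and the cyclic action~\eqref{eq:lprime} moves between blocks of different heights, so the induction must be arranged so that exactly one fresh unknown appears at each step, allowing the exact staircase sequence to solve for it. Verifying that the chain of extra objects generated along cyclic orbits terminates in manifestly known objects, and that the two case distinctions $\lambda_1<n-i$ versus $\lambda_1=n-i$ splice together correctly across blocks, is where the bulk of the work lies.
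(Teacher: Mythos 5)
Your global reduction coincides with the paper's (show $\CO\in\D$ and $\CC_i(i+1)\subseteq\D$; the paper then cites Orlov's theorem that $\bigoplus_{i=0}^{\dim}\CO(i)$ is a classical generator rather than a Koszul argument, which is immaterial), and you correctly identify the staircase complexes of Proposition~\ref{prop:stc} as the engine. The gap is in how you deploy them. First, the case $\lambda_1<n-i$ needs no argument at all: such a $\lambda$ lies in $\You_{i+1,n-i-1}$, so $\CE^\lambda(i+1)$ is already an object of $\CC_{i+1}(i+1)$, and your companion construction there is superfluous. Second, in the only nontrivial case $\lambda_1=n-i$, realizing $\lambda$ as a \emph{middle} term $\mu^{(k)}$ of a companion's complex does not let you solve for $\CE^\lambda(i+1)$: after the twist that makes $\CE^\lambda(i+1)$ appear, the right end $\CE^\mu$ has $h(\mu)+w(\mu)=n+1$, hence never occurs in \eqref{eq:kp-coll}, and with your companion $\mu\in\You_{i,n-i+1}$ it is needed at twist $i+1=n-w(\mu)+2$, one beyond even the auxiliary objects the paper's induction produces; the left end $\CE^{\mu'}(i)$ can also fall outside the collection (for instance when $\lambda_2=\lambda_1$). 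So each exact sequence carries two or three unknowns at once. You acknowledge this (``exactly one fresh unknown \dots is where the bulk of the work lies''), but the induction that would guarantee it, and the termination of the chains of extra objects along the orbits of \eqref{eq:lprime}, are never set up or verified --- and that bookkeeping is precisely the content of the theorem, not a routine check; as arranged, with extras demanded at twists $n-w+2$, it is not clear it can close at all.

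The paper's proof avoids the problem by never placing the unknown in the middle. Given a nonzero $\lambda$ with $h(\lambda)+w(\lambda)\le n+1$, it views $\lambda$ inside $\You_{n+1-w,w}$ with $w=\lambda_1$, so that $\lambda$ is the \emph{rightmost} term of its own staircase complex \eqref{eq:stc}. Twisting by $\CO(n-w+1)$, every middle term $\CE^{\lambda^{(k)}}(n-w+1)$ lies in \eqref{eq:kp-coll}, because $w(\lambda^{(k)})\le w-1$ and $h(\lambda^{(k)})\le n+1-w$; the unique remaining term $\CE^{\lambda'}(n-w)$ either lies in the collection (when $t(\lambda)=1$, i.e.\ a single row of maximal width) or is exactly the object supplied by the inductive hypothesis, since $\lambda\mapsto\lambda'$ decreases $t(\lambda)$ by one while raising the width by one. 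A single induction on $t(\lambda)$ therefore suffices --- no outer induction on the block index, and only one unknown per complex by construction --- and the last object $\CO(n+1)$ comes from $\CE^{\mu}\simeq\CO(1)$ for $\mu=(1,\ldots,1)$ taken at twist $n$. If you reorganize your argument along these lines, with the target always at the right end of its own complex, the unverified combinatorics you defer simply disappears.
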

\begin{proof}
  Let us denote by $\CC$ the subcategory generated by the exceptional
  collection~\eqref{eq:kp-coll}. In order to show that $\CC=D^b(\LGr(n, V))$,
  it is enough to prove that $\CC(1)\subseteq\CC$. Indeed, as $\CO\in\CC$,
  the latter would imply that $\CO(i)\in \CC$ for all $i>0$. It is well known
  that for a smooth projective variety of dimension $d$ the object
  $\oplus_{i=0}^{d}\CO(i)$ is a classical generator of its bounded derived category
  (see~\cite[Theorem~4]{Orl}). Thus, we will be able to conclude that $\CC$ coincides with
  the whole bounded derived category.

  Let $\lambda$ be a Young diagram. Put $w(\lambda)=\lambda_1$,
  and let $h(\lambda)$ denote the number of nonzero rows in $\lambda$.
  The exceptional collection~\eqref{eq:kp-coll}
  consists of the objects $\CE^\lambda(i)$, where $h(\lambda)+w(\lambda) \leq n$,
  and the twist $i$ runs over the integers $h(\lambda), h(\lambda)+1, \ldots, n-w(\lambda)$.
  We will show by induction that $\CE^\lambda(n-w(\lambda)+1)\in \CC$
  for any $\lambda$ such that $h(\lambda)+w(\lambda) \leq n+1$.

  Let $\lambda$ be a Young diagram with $h(\lambda)+w(\lambda) \leq n+1$ and $w(\lambda) > 0$
  (the last condition implies that $\lambda$ is non-zero, we will deal with the latter
  case in the end of the proof). Denote by $t(\lambda)$ the number of rows of width
  $w(\lambda)$. In other words, either $\lambda$ is such that
  $\lambda_1=\lambda_2=\cdots=\lambda_{t(\lambda)}>\lambda_{t(\lambda)+1}$,
  or $t(\lambda)=h(\lambda)$, and $\lambda_1=\lambda_2=\ldots=\lambda_{h(\lambda)}$
  (see Figure~\ref{fig:hwt}).

  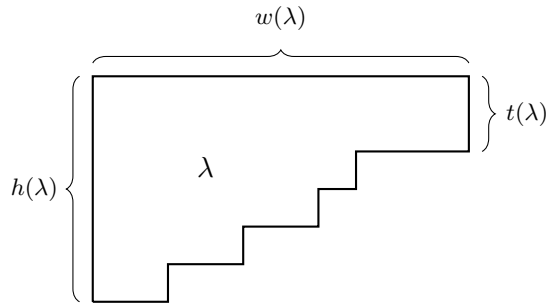
\begin{figure}[h]
    \begin{tikzpicture}
      \draw[thick] (0, 0) -- (0, 3) -- (5, 3) -- (5, 2) -- (3.5, 2) -- (3.5, 1.5)
      -- (3, 1.5) -- (3, 1) -- (2, 1) -- (2, 0.5) -- (1, 0.5) -- (1, 0) -- (0, 0);
      \draw (1.5, 1.8) node{$\lambda$};
      \draw [decorate,decoration={brace,amplitude=5pt},xshift=-5pt,yshift=0pt] (0, 0) -- (0, 3) node [black,midway,xshift=-0.6cm] {\footnotesize $h(\lambda)$};
      \draw [decorate,decoration={brace,amplitude=5pt},xshift=0pt,yshift=5pt] (0, 3) -- (5, 3) node [black,midway,yshift=0.6cm] {\footnotesize $w(\lambda)$};
      \draw [decorate,decoration={brace,amplitude=5pt},xshift=5pt,yshift=0pt] (5, 3) -- (5, 2) node [black,midway,xshift=0.6cm] {\footnotesize $t(\lambda)$};
    \end{tikzpicture}
    \caption{$h(\lambda)$, $w(\lambda)$, and $t(\lambda)$}\label{fig:hwt}
  \end{figure}

  The induction will run on $t(\lambda)$, the base case being $t(\lambda)=1$.
  For convenience, put $w=w(\lambda)$ and $h=h(\lambda)$.
  Let $\lambda$ be such that $t(\lambda) = 1$. We treat $\lambda$ as an element
  of $\You_{n+1-w, w}$.
  Consider the complex~\eqref{eq:stc}
  twisted by $\CO(n-w+1)$:
  \begin{equation}\label{eq:twcpx}
    0\to \CE^{\lambda'}(n-w)\to \SpS{V}{\nu_w}\otimes\CE^{\lambda^{(w)}}(n-w+1)\to \cdots
    \to \SpS{V}{\nu_1}\otimes\CE^{\lambda^{(1)}}(n-w+1)\to
    \CE^\lambda(n-w+1)\to 0.
  \end{equation}
  By definition, $h(\lambda^{(i)})\leq h$ and $w(\lambda^{(i)})\leq w-1$
  for all $i=1,\ldots,w$. In particular, $h(\lambda^{(i)})+w(\lambda^{(i)})\leq n$,
  and $\CE^{\lambda^{(i)}}$ appears in the exceptional collection with twists
  ranging from $h(\lambda^{(i)})\leq h$ to $n-w(\lambda^{(i)})\geq n-w+1$.
  The latter includes $n-w+1$; thus, every term
  of the complex~\eqref{eq:twcpx} of the form $\SpS{V}{\nu_i}\otimes\CE^{\lambda^{(i)}}(n-w+1)$
  belongs to $\CC$. Meanwhile, $w(\lambda')=\lambda_2+1\leq w$ (here we use the hypothesis
  $t(\lambda)=1$, which implies $\lambda_2<\lambda_1=w$), and $h(\lambda')\leq (n+1)-w-1=n-w$
  (see the remark following~\eqref{eq:lprime}).
  Thus, $w(\lambda')+h(\lambda')\leq n$, and the bundle $\CE^{\lambda'}(n-w)$
  belongs to the exceptional collection~\eqref{eq:kp-coll}. Treating~\eqref{eq:twcpx}
  as a resolution for $\CE^\lambda(n-w+1)$, we conclude that $\CE^\lambda(n-w+1)\in\CC$.

  For the inductive step, assume that the statement is known for $0<t(\lambda)\leq t$.
  Let $\lambda$ be such that $t(\lambda)=t+1$. Again, consider the exact complex~\eqref{eq:twcpx}.
  The exact same reasoning as in the base case shows that
  $\SpS{V}{\nu_i}\otimes\CE^{\lambda^{(i)}}(n-w+1)\in\CC$. Meanwhile,
  $t(\lambda)\geq 2$ implies $t(\lambda') = t(\lambda)-1$ and $w(\lambda')=w+1$.
  By the inductive hypothesis $\CE^{\lambda'}(n-w(\lambda')+1)=\CE^{\lambda'}(n-w)$ belongs
  to $\CC$. Again, treating~\eqref{eq:twcpx}
  as a resolution for $\CE^\lambda(n-w+1)$, we conclude that $\CE^\lambda(n-w+1)\in\CC$.

  So far we managed to prove that if $\lambda$ is such that $h(\lambda)+w(\lambda)\leq n$
  and $t(\lambda)\geq 0$, then $\CE^\lambda(n-w+1)\in\CC$. We are left with the case
  $t(\lambda)=0$, which corresponds to $\CO$. Precisely, we need to show that $\CO(n+1)\in\CC$.
  Remark that we managed to show a little bit more. Namely, we showed that
  if the diagram~$\lambda$ is such that $h(\lambda)+w(\lambda)\leq n+1$
  and $t(\lambda)\geq 0$, then $\CE^\lambda(i)\in\CC$ for $i=h, \ldots, n-w+1$.
  Consider the diagram $\mu=(\underbrace{1, 1, \ldots, 1}_{n})$.
  On the one hand, $h(\mu)=n$ and $w(\mu)=1$; thus, $\CE^\mu(n)\in\CC$.
  On the other hand, $\CE^\mu\simeq \CO(1)$; thus, $\CE^\mu(n)\simeq\CO(n+1)\in\CC$.
\end{proof}

\section{Minimal Lefschetz exceptional collection in $D^b(\LGr(5, 10))$}

The exceptional collections of Kuznetsov--Polishchuk may not be the most suitable for
some important computations.
In the present section we construct a minimal Lefschetz exceptional collection
in $D^b(\LGr(5, 10))$.

\subsection{Lefschetz exceptional collections}
Let $X$ be a smooth projective variety, and let $\CO(1)$ be a very ample line bundle on $X$.
\begin{definition}
  A \emph{Lefschetz semiorthogonal decomposition} is a semiorthogonal decomposition
  of the form
  \begin{equation*}
    D^b(X)=\langle \CB_0, \CB_1(1),\ldots, \CB_{r-1}(r-1)\rangle,
  \end{equation*}
  where $\CB_0\supseteq \CB_1\supseteq\cdots\supseteq \CB_{r-1}$ are full triangulated
  subcategories, which are called \emph{blocks}.
  A Lefschetz semiorthogonal decomposition is \emph{minimal} if it is minimal with
  respect to the partial inclusion order on the first block.
\end{definition}
When $X$ is a Fano variety, and $\omega_X\simeq \CO(-r)$, Serre duality implies that
$r$ is the maximal number of blocks in a Lefschetz semiorthogonal decomposition.

Lefschetz decompositions are one of the core components of the Homological Projective duality
theory developed by Kuznetsov, see~\cite{HPD}. One of their most pleasant properties
is the following observation.

\begin{proposition}
  Let $\iota:Y\to X$ be a smooth hyperplane section of $X$ with respect to $\CO(1)$.
  The functor $\iota^*$ is fully faithful on $\CB_i(i)$ for $i=1,\ldots,r-1$;
  moreover, there is a semiorthogonal decomposition
  \begin{equation*}
    D^b(Y)\supseteq \langle \iota^*\CB_1(1), \iota^*\CB_2(2), \ldots, \iota^*\CB_{r-1}(r-1)\rangle.
  \end{equation*}
\end{proposition}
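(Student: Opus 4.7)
My plan is to reduce both assertions to the semiorthogonality of the given Lefschetz decomposition on $X$ via the standard Koszul-type triangle associated with the smooth hyperplane section $\iota\colon Y\to X$. The starting point is the short exact sequence
\[
0 \to \CO_X(-1) \to \CO_X \to \iota_*\CO_Y \to 0.
\]
Tensoring with any $F\in D^b(X)$ and applying $\Hom_X^\bullet(E,-)$, together with the adjunction $\iota_*\dashv\iota^*$ and the projection formula, I obtain a functorial distinguished triangle
\[
\Hom_X^\bullet(E, F(-1)) \to \Hom_X^\bullet(E, F) \to \Hom_Y^\bullet(\iota^*E, \iota^*F) \to \Hom_X^\bullet(E, F(-1))[1].
\]
Both claims will follow by showing that the appropriate groups on the left vanish, using only semiorthogonality on $X$.

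For full faithfulness of $\iota^*$ on $\CB_i(i)$ with $i\geq 1$, I would take arbitrary $E,F\in \CB_i(i)$ and observe that $F(-1)\in \CB_i(i-1)\subseteq \CB_{i-1}(i-1)$, where the inclusion uses the nestedness $\CB_i\subseteq \CB_{i-1}$. Since $i>i-1$, semiorthogonality of the decomposition $\langle \CB_0,\CB_1(1),\ldots,\CB_{r-1}(r-1)\rangle$ on $X$ kills $\Hom_X^\bullet(E,F(-1))$, and the middle arrow of the triangle is then an isomorphism. For semiorthogonality of $\langle \iota^*\CB_1(1),\ldots,\iota^*\CB_{r-1}(r-1)\rangle$ in $D^b(Y)$, I would take $E\in\CB_i(i)$ and $F\in\CB_j(j)$ with $r-1\geq i>j\geq 1$. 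Then $\Hom_X^\bullet(E,F)=0$ directly by semiorthogonality on $X$, and $F(-1)\in\CB_j(j-1)\subseteq\CB_{j-1}(j-1)$ combined with $i>j-1$ gives $\Hom_X^\bullet(E,F(-1))=0$ for the same reason; the triangle then forces $\Hom_Y^\bullet(\iota^*E,\iota^*F)=0$.

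I do not anticipate any serious obstacle: the argument is essentially formal once the triangle above is in hand. The only point that requires care is to invoke the nestedness $\CB_i\subseteq \CB_{i-1}$ exactly where a twist by $\CO(-1)$ appears, and this is precisely the design feature built into the definition of a Lefschetz decomposition that makes it behave well under hyperplane sections.
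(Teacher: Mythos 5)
Your argument is correct, and it is precisely the standard proof: the paper does not prove this proposition at all but quotes it from Kuznetsov's homological projective duality paper, where the same Koszul-triangle argument (restriction triangle $F(-1)\to F\to \iota_*\iota^*F$, then $\Hom_X^\bullet(E,-)$, with the left-hand terms killed by semiorthogonality on $X$ together with the nesting $\CB_i\subseteq\CB_{i-1}$) is used. Two small points of hygiene: the adjunction should read $\iota^*\dashv\iota_*$ (pullback is left adjoint to pushforward), and one should note that the resulting map $\Hom_X^\bullet(E,F)\to\Hom_Y^\bullet(\iota^*E,\iota^*F)$ is indeed the map induced by the functor $\iota^*$, because $F\to\iota_*\iota^*F$ is the unit of that adjunction; finally, to interpret the conclusion as a semiorthogonal decomposition of the subcategory generated by the $\iota^*\CB_i(i)$ one also needs these subcategories to be admissible, which follows from your full-faithfulness statement since each $\CB_i(i)$ is admissible in $D^b(X)$ and $Y$ is smooth and projective.
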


In particular, whenever $\CB_0$ is small, one knows quite a lot of information about
the derived category of hyperplane sections of $X$.

\begin{definition}
  Let $D^b(X)=\langle \CB_0, \CB_1(1),\ldots, \CB_{r-1}(r-1)\rangle$ be a Lefschetz
  semiorthogonal decomposition. If $\CB_0$ is generated by a full
  exceptional collection, and each $\CB_i$ is generated by its subcollection, we will say
  that the resulting exceptional collection is a \emph{Lefschetz exceptional collection}.
  A Lefschetz exceptional collection is \emph{minimal} if the exceptional collection
  generating $\CB_0$ is of the smallest possible length.
\end{definition}

\subsection{Minimal Lefschetz exceptional collection in $D^b(\LGr(5, 10))$}
Consider a $10$-dimensional symplectic vector space $V$ over $\kk$. Our goal is to construct
a minimal Lefschetz exceptional collection in the~derived category $D^b(\LGr(5, V))$.
Since the rank of $K_0(\LGr(5, V))=2^5=32$, and $\omega_{\LGr(5,V)}\simeq \CO(-6)$,
the~smallest possible number of objects in the first block of such a collection equals~5.

\begin{theorem}\label{thm:510}
  The bounded derived category of coherent sheaves on $\LGr(5, V)$ admits
  a full minimal Lefschetz exceptional collection of the form
  \begin{equation}\label{eq:510}
    D^b(\LGr(5, V)) =
    \begin{pmatrix*}[r]
      \CE^{2, 2} & \CE^{2, 2}(1) & & & & \\
      \CE^{2, 1} & \CE^{2, 1}(1) & \CE^{2, 1}(2) & \CE^{2, 1}(3) & \CE^{2, 1}(4) & \CE^{2, 1}(5) \\
      \CE^{2} & \CE^{2}(1) & \CE^{2}(2) & \CE^{2}(3) & \CE^{2}(4) & \CE^{2}(5) \\
      \Lambda^2\CU^* & \Lambda^2\CU^*(1) & \Lambda^2\CU^*(2) & \Lambda^2\CU^*(3) & \Lambda^2\CU^*(4) & \Lambda^2\CU^*(5) \\
      \CU^* & \CU^*(1) & \CU^*(2) & \CU^*(3) & \CU^*(4) & \CU^*(5) \\
      \CO & \CO(1) & \CO(2) & \CO(3) & \CO(4) & \CO(5) \\
    \end{pmatrix*}.
  \end{equation}
\end{theorem}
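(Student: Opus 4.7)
The plan is to verify that~\eqref{eq:510} is an exceptional collection with the stated Lefschetz block structure and then to establish fullness by an argument parallel to the fullness proof given at the end of Section~4.

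First, I would verify exceptionality. The six generators of the first block can be identified as $\CE^\lambda$ for $\lambda$ ranging over $\You_{2,2}$, where $\CO=\CE^\emptyset$, $\CU^*=\CE^{(1)}$, and $\Lambda^2\CU^*=\CE^{(1,1)}$; the ordering in~\eqref{eq:510} refines the inclusion partial order on diagrams. Intra-block semiorthogonality then follows from the defining property~\eqref{eq:el-up} of $\CE^\lambda$ in the equivariant category, supplemented by a Borel--Bott--Weil check that equivariant and non-equivariant $\Ext$ groups coincide for the pairs at hand. Inter-block semiorthogonality reduces to $\Ext^\bullet(\CE^\lambda,\CE^\mu(k))=0$ for $k<0$ and $\lambda,\mu$ in the allowed range; using the geometric realizations of $\CE^\lambda$ as pushforwards from the partial flag varieties in Propositions~\ref{prop:fl-reg-sp} and~\ref{prop:fl-irreg-sp}, this reduces to Borel--Bott--Weil vanishing statements of the type collected in Appendix~A. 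The Lefschetz block inclusion $\CB_0=\CB_1\supseteq\CB_2=\cdots=\CB_5$ is immediate from the way~\eqref{eq:510} is tabulated.

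For fullness, let $\CC\subseteq D^b(\LGr(5,V))$ denote the subcategory generated by~\eqref{eq:510}. I would imitate the fullness proof at the end of Section~4 and show $\CC(1)\subseteq\CC$, which implies $\CO(i)\in\CC$ for all $i\geq 0$ and lets Orlov's classical generator theorem conclude. The inclusion $\CC(1)\subseteq\CC$ reduces to finitely many memberships; comparing the columns of~\eqref{eq:510} with their once-shifted counterparts, the only nontrivial checks are that $\CO(6),\,\CU^*(6),\,\Lambda^2\CU^*(6),\,\CE^{(2)}(6),\,\CE^{(2,1)}(6)$, and $\CE^{(2,2)}(2)$ all belong to $\CC$. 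Each is established by an appropriate Lagrangian staircase complex from Proposition~\ref{prop:stc}, exactly as in the induction used for Theorem~\ref{thm:meta}, iterating when auxiliary bundles $\CE^\nu$ appear that are not among the Lefschetz generators.

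The main obstacle I foresee is the case $\CE^{(2,2)}(2)$: the diagram $(2,2)$ only fits into $\You_{h,w}$ with $h+w=6$ and $\lambda_1=w$ as $(2,2,0,0)\in\You_{4,2}$, and the associated staircase complex introduces auxiliary objects like $\CE^{(3,1,1)}$ that do not manifestly lie in $\CC$. Handling this will require chaining several staircase complexes and possibly invoking Lemma~\ref{lm:dual} to recognize certain intermediate expressions back in $\CC$. Once these finite checks are complete, combined with $\mathrm{rk}\,K_0(\LGr(5,V))=32$ matching the total length of~\eqref{eq:510}, Theorem~\ref{thm:510} follows.
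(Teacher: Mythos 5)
Your exceptionality argument is essentially the paper's own (Proposition~\ref{prop:510exc}): the first column is the Kuznetsov--Polishchuk block $\You_{2,2}$, so each column is exceptional by Proposition~\ref{prop:blk}, and the cross-column vanishing is reduced, by expanding each $\CE^\lambda$ into irreducibles $\Sigma^\nu\CU^*$ with $\nu\in\You_{2,2}$, to a Borel--Bott--Weil weight check (one correction: intra-block exceptionality in the non-equivariant category is exactly the content of Proposition~\ref{prop:blk}; it is not that equivariant and non-equivariant $\Ext$'s coincide, which is false in general). For fullness, however, your route differs from the paper's and is where the gap lies. The paper never proves that the subcategory generated by~\eqref{eq:510} is closed under twisting; instead it invokes Theorem~\ref{thm:meta} (fullness of the collection~\eqref{eq:kp-coll}, already proved) and verifies, in eleven explicit steps including the spliced complex~\eqref{eq:22ext}, that the fourteen objects~\eqref{eq:510ntriv} lie in the subcategory. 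Your reduction of $\CC(1)\subseteq\CC$ to the six memberships $\CO(6)$, $\CU^*(6)$, $\Lambda^2\CU^*(6)$, $\CE^{2}(6)$, $\CE^{2,1}(6)$, $\CE^{2,2}(2)$, followed by Orlov's theorem, is correct in principle.

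The gap is that these six memberships are precisely the substance of the proof, and you only assert that ``an appropriate staircase complex, iterating when auxiliary bundles appear'' handles each. The iteration is open-ended and you give no mechanism forcing it to close: unlike the proof of Theorem~\ref{thm:meta}, where the induction on $t(\lambda)$ guarantees every term of the twisted staircase complex is either in the collection or covered by the inductive hypothesis, here the generators involve only diagrams in $\You_{2,2}$ with narrowly restricted twists. Concretely, the only staircase complex whose rightmost term is a twist of $\CO$ gives $0\to \CE^{2,2,2,2}(4)\to V^{[5]}\otimes\CO(5)\to \CO(6)\to 0$, so you must first show $\CE^{2,2,2,2}(4)\in\CC$; its staircase complex requires $\CE^{3,3,3}(3)$, $\Lambda^3\CU^*(4)$, $\Lambda^4\CU^*(4)$, the one for $\CE^{3,3,3}(3)$ requires $\CE^{4,4}(2)$, $\CE^{2,2}(3)$, $\CE^{2,2,1}(3)$, $\CE^{2,2,2}(3)$, and so on; similarly $\CU^*(6)$ requires $\Lambda^4\CU^*(5)$, hence $\CE^{2,2,2,1}(4)$, hence $\CE^{3,3,2}(3)$, while the naive route to $\Lambda^4\CU^*(5)$ via the tautological sequence is circular (it needs $\CU^*(6)$). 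None of these auxiliaries is a generator of $\CC$, and Lemma~\ref{lm:dual} does not help since the diagrams leave $\You_{2,2}$. Your identification of $\CE^{2,2}(2)$ as an obstacle is apt --- the paper resolves it by splicing two staircase complexes into~\eqref{eq:22ext}, after first placing $\Lambda^3\CU^*(1)$ and $\CE^{2,1,1}(1)$ in the subcategory --- but the analogous finite bookkeeping for all six targets (or, more efficiently, adopting the paper's strategy of reducing to the known full collection~\eqref{eq:kp-coll}) is not carried out, so the proposal is a plan for Proposition~\ref{prop:510full} rather than a proof of it.
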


\begin{proposition}\label{prop:510exc}
  The collection of vector bundles~\eqref{eq:510} is exceptional.
\end{proposition}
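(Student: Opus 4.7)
I would prove Proposition~\ref{prop:510exc} in two stages: first establishing exceptionality within each column of~\eqref{eq:510}, then establishing semiorthogonality across columns.

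For the first stage, note that the six bundles $\CO, \CU^*, \Lambda^2\CU^*, \CE^2, \CE^{2,1}, \CE^{2,2}$ in the leftmost column are precisely the Kuznetsov--Polishchuk objects $\CE^\lambda$ for $\lambda \in \You_{2,2}$, via the identifications $\CO = \CE^\emptyset$, $\CU^* = \CE^{(1)}$, $\Lambda^2\CU^* = \CE^{(1,1)}$ discussed just before Remark~\ref{rm:bun}. Because $h + w = 4 \leq n + 1 = 6$, the set $\You_{2,2}$ is a Kuznetsov--Polishchuk exceptional block, so by Proposition~\ref{prop:blk} the collection $\langle \CE^\lambda \mid \lambda \in \You_{2,2} \rangle$, with any total order refining the partial order $\subseteq$, is exceptional in $D^b(\LGr(5, V))$. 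The ordering $\emptyset, (1), (1,1), (2), (2,1), (2,2)$ used in~\eqref{eq:510} is such a refinement; the shorter five-object subcollection present in columns~$3$--$6$ is exceptional \emph{a fortiori}.

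For the second stage, I need to verify
\[
  \Ext^\bullet(\CE^\lambda, \CE^\mu(-m)) = 0
\]
for every pair of bundles in~\eqref{eq:510} such that $\CE^\lambda(i)$ and $\CE^\mu(j)$ both appear with $i - j = m \geq 1$. By the characterization~\eqref{eq:el-up}, each $\CE^\nu$ is an equivariant iterated extension of the bundles $\Sigma^\gamma\CU^*$ with $0 \subseteq \gamma \subseteq \nu$. Substituting the resulting filtrations on both sides and using the associated spectral sequence reduces the claim to showing
\[
  H^\bullet\bigl(\LGr(5, V),\, \Sigma^\alpha\CU \otimes \Sigma^\beta\CU^*(-m)\bigr) = 0
\]
for every $\alpha, \beta \in \You_{2,2}$ and every relevant $m \in \{1, \ldots, 5\}$ (with $\alpha = (2,2)$ only occurring when $m = 1$, since $\CE^{(2,2)}$ appears only in the first two columns of~\eqref{eq:510}). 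By the Littlewood--Richardson rule (Lemma~\ref{lm:prod}), the tensor product $\Sigma^\alpha\CU \otimes \Sigma^\beta\CU^*$ decomposes into equivariant summands $\Sigma^\nu\CU^*$ with $-\beta \subseteq \nu \subseteq \alpha$, and each resulting cohomology group $H^\bullet(\LGr(5, V), \Sigma^{\nu(-m)}\CU^*)$ is then read off from the Borel--Bott--Weil theorem, using the vanishing lemmas of Appendix~A.

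The main obstacle is the bookkeeping of this finite but substantial list of BBW checks. I would organize it by first observing that the weights $\nu(-m)$, with $-\beta \subseteq \nu \subseteq \alpha$ and $\alpha, \beta \in \You_{2,2}$, all lie in a narrow band around $(-m, \ldots, -m)$; after shifting by $\rho$, most of them land in singular Weyl chambers and contribute no cohomology for purely combinatorial reasons. Where a direct BBW check on $\LGr(5, V)$ is awkward, I would use the pushforward descriptions of $\CE^\lambda$ from Propositions~\ref{prop:fl-reg-sp} and~\ref{prop:fl-irreg-sp} to move the computation onto an isotropic partial flag variety, following the technique employed throughout Section~3.
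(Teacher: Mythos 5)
Your proposal follows the paper's proof essentially verbatim: exceptionality within each column comes from the Kuznetsov--Polishchuk block $\You_{2,2}$, and the cross-column vanishing is reduced, via the filtrations from~\eqref{eq:el-up} and Lemma~\ref{lm:prod}, to Borel--Bott--Weil vanishing of $H^\bullet(\LGr(5,V),\Sigma^{\nu(-t)}\CU^*)$ for weights $\nu$ of the shape $(\ast,\ast,0,\ast,\ast)$ squeezed between a diagram in $\You_{2,2}$ and the negative of one, and your remark that $(2,2)$ occurs as a source only with twist $1$ is exactly what excludes the unique regular weight $(2,2,0,-2,-2)$ at twist $2$ (where $\Ext^5(\CE^{2,2}(2),\CE^{2,2})\simeq\kk$ is genuinely nonzero). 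The only points you leave implicit are the paper's short closing computation --- Serre duality reduces to $t=1,2,3$; for $t=3$ the middle entry of $\rho+\nu(-t)$ vanishes; for $t=1$ the last three entries have absolute value at most $2$, forcing a zero or a repetition; for $t=2$ only $(2,2,0,-2,-2)$ is regular --- together with a harmless transposition in your Littlewood--Richardson bound, which for summands written as $\Sigma^\nu\CU^*$ of $\Sigma^\alpha\CU\otimes\Sigma^\beta\CU^*$ should read $-\alpha\subseteq\nu\subseteq\beta$.
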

\begin{proof}
  Remark that the objects in the first column all belong to the exceptional
  block $\langle \CE^\lambda \mid \lambda\in \You_{2, 2}\rangle$.
  Thus, the collection is exceptional in each column.
  It remains to check that
  \begin{equation*}
    \Ext^\bullet(\CE^\lambda(t), \CE^\mu) = 0
  \end{equation*}
  for $0\subseteq \lambda \subseteq (2, 1)$, $0\subseteq \mu \subseteq (2, 2)$, $t=1,\ldots,5$,
  and $\lambda=(2,2)$, $0\subseteq \mu \subseteq (2, 2)$, $t=1$.

  Since each of the objects involved is an extension of irreducible vector bundles
  of the form $\Sigma^\nu\CU^*$, where~$\nu\in\You_{2, 2}$, it is enough to check that
  \begin{equation}\label{eq:510cu}
    \Ext^\bullet(\Sigma^\lambda\CU^*(t), \Sigma^\mu\CU^*) = 
    H^\bullet(\LGr(5, 10), \Sigma^\lambda\CU\otimes\Sigma^\mu\CU^*(-t)) = 0.
  \end{equation}
  for $0\subseteq \lambda \subseteq (2, 1)$, $0\subseteq \mu \subseteq (2, 2)$, $t=1,\ldots,5$,
  and $\lambda=(2,2)$, $0\subseteq \mu \subseteq (2, 2)$, $t=1$.
  By Lemma~\ref{lm:prod}, $\Sigma^\lambda\CU\otimes\Sigma^\mu\CU^*$ decomposes
  into a direct sum of irreducible bundles of the form $\Sigma^\nu\CU^*$, where
  $-\lambda\subseteq \nu \subseteq \mu$. Thus, it is sufficient to check that for any
  $\nu=(\alpha, \beta, 0, \gamma, \delta)\in\You_5$ such that $0\leq \alpha, \beta, \gamma, \delta\leq 0$
  \begin{equation*}
    H^\bullet(\LGr(5, 10), \Sigma^\nu\CU^*(-t)) = H^\bullet(\LGr(5, 10), \Sigma^{\nu}\CU^* = 0
  \end{equation*}
  if $t=1,\ldots,5$ and $\nu\neq (2,2,0,-2,-2)$, or
  $\nu=(2,2,0,-2,-2)$ and $t=1, 3, 5$.

  By the Borel--Bott--Weil theorem, we need to look at the weight
  \begin{equation}\label{eq:510bbw}
    \rho + \nu(-t) = (5+\alpha-t, 4+\beta-t, 3-t, 2+\gamma-t, 1+\delta-t)
  \end{equation}
  By Serre duality, it is enough to deal with $t=1,2,3$. When $t=3$, the third term
  in~\eqref{eq:510bbw} equals 0; thus, the cohomology groups vanish.
  When $t=1$, the last three terms in the sequence have absolute values at most 2.
  Thus, either one of them is 0, or the absolute values of a pair of them are equal.
  Similarly, when $t=2$, we see that the only option for the absolute values of the terms
  of~\eqref{eq:510bbw} to be positive and distinct is $\nu=(2, 2, 0, -2, -2)$.
\end{proof}

\begin{remark}
  One can continue the computation in the proof of Proposition~\ref{prop:510exc}
  and conclude that
  \begin{equation*}
    \Ext^5(\CE^{2, 2}(2), \CE^{(2, 2)})\simeq \kk.
  \end{equation*}
  A nontrivial element it this group can be realized
  as the Yoneda product of the~(twisted) staircase complexes
  \begin{equation*}
    0\to \CE^{3, 1, 1}(1)\to V^{[3]}\otimes \CU^*(2)\to V^{[2]}\otimes \Lambda^2\CU^*(2)
    \to \CE^{2, 2}(2)\to 0
  \end{equation*}
  and
  \begin{equation*}
    0\to \CE^{2,2}\to V^{[5]}\otimes\CO(1) \to V^{[2]}\otimes \Lambda^3\CU^*(1) \to
    V\otimes\CE^{2,1,1}(1) \to \CE^{3,1,1}(1)\to 0.
  \end{equation*}
  The resulting complex is
  \begin{multline}\label{eq:22ext}
    0\to \CE^{2,2}\to V^{[5]}\otimes\CO(1) \to V^{[2]}\otimes \Lambda^3\CU^*(1) \to
    V\otimes\CE^{2,1,1}(1) \to \\
    \to V^{[3]}\otimes \CU^*(2)\to V^{[2]}\otimes \Lambda^2\CU^*(2)
    \to \CE^{2, 2}(2)\to 0.
  \end{multline}
\end{remark}

\begin{proposition}\label{prop:510full}
  The exceptional collection~\eqref{eq:510} is full.
\end{proposition}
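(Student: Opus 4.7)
The proof will follow the strategy of the fullness theorem for the Kuznetsov--Polishchuk collection from Section~4: I plan to show that the triangulated subcategory $\CC\subseteq D^b(\LGr(5,V))$ generated by the collection~\eqref{eq:510} is stable under the autoequivalence $-\otimes\CO(1)$. Since $\CO\in\CC$, this will yield $\CO(i)\in\CC$ for all $i\geq 0$, and fullness then follows from~\cite[Theorem~4]{Orl}, because $\bigoplus_{i=0}^{d}\CO(i)$ classically generates $D^b(\LGr(5,V))$, where $d=\dim\LGr(5,V)$.

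Most of the verification $\CC(1)\subseteq\CC$ is tautological: for every generator appearing in columns $0$ through $4$ of~\eqref{eq:510}, twisting by $\CO(1)$ produces the generator sitting immediately to its right, which lies in $\CC$ by construction. The top row of~\eqref{eq:510} ends at column $1$, while the five remaining rows end at column $5$, so the boundary cases demanding genuine work are $\CE^{2,2}(2)$ together with $\CO(6)$, $\CU^*(6)$, $\Lambda^2\CU^*(6)$, $\CE^{2}(6)$, and $\CE^{2,1}(6)$.

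For the first boundary case, the relevant tool is the six-term exact complex~\eqref{eq:22ext} displayed in the remark preceding the proposition; it arises as the Yoneda product of two Lagrangian staircase complexes from Proposition~\ref{prop:stc}. This complex exhibits $\CE^{2,2}(2)$ as an iterated cone over $\CE^{2,2}$, $\CO(1)$, $\CU^*(2)$, and $\Lambda^2\CU^*(2)$---all of which belong to $\CC$ by inspection---together with the two intermediate bundles $\Lambda^3\CU^*(1)$ and $\CE^{2,1,1}(1)$. Neither of these appears in the initial collection, so I would first place them in $\CC$ by invoking auxiliary staircase complexes for suitable $\lambda\in\You_{h,w}$ with $h+w=6$ and $\lambda_1=w$, expressing each as an iterated cone over objects of~\eqref{eq:510}.

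For each remaining boundary object $\CE^\lambda(6)$, the plan is analogous: construct a staircase complex whose rightmost term, after twisting by the appropriate power of $\CO(1)$, equals $\CE^\lambda(6)$, while the remaining terms lie in $\CC$ either directly or by a previously handled case. The main obstacle is organizational rather than conceptual---one must arrange the case analysis so that the Young diagrams appearing as intermediate terms form a chain that can be resolved by a short induction, using the fact that we have five feasible shape regimes $(h,w)=(5,1),(4,2),(3,3),(2,4),(1,5)$ to draw staircase complexes from. The Yoneda complex~\eqref{eq:22ext} serves as the template for this phenomenon, and I expect that analogous smaller compositions of staircase complexes will handle the five $\CE^\lambda(6)$ cases in a uniform manner.
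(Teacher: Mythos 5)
Your overall strategy---proving $\CC(1)\subseteq\CC$ and then quoting Orlov's generation result, exactly as in the paper's Section~4 argument for the Kuznetsov--Polishchuk collection---is legitimate, and you have correctly isolated the boundary objects: $\CE^{2,2}(2)$ together with $\CO(6)$, $\CU^*(6)$, $\Lambda^2\CU^*(6)$, $\CE^{2}(6)$, $\CE^{2,1}(6)$. The case of $\CE^{2,2}(2)$ is indeed settled by the complex~\eqref{eq:22ext} once $\Lambda^3\CU^*(1)$ and $\CE^{2,1,1}(1)$ are produced by the two short staircase complexes you allude to. The genuine gap is the treatment of the five twist-$6$ objects: you assert they are handled by ``analogous smaller compositions of staircase complexes'' and ``a short induction,'' but you neither exhibit the complexes nor check that the induction closes without circularity, and this is where essentially all the content of the proof lies. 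The difficulty is not merely organizational. A staircase complex~\eqref{eq:stc} has rightmost term $\CE^{\lambda}$ with $\lambda_1=w$, and the only such $\lambda$ with $\CE^{\lambda}$ a twist of $\CO$ is $\lambda=(1,1,1,1,1)$; hence to place $\CO(6)$ in $\CC$ you are forced through $\CE^{2,2,2,2}(4)$, which in turn requires $\CE^{3,3,3}(3)$, which requires $\CE^{4,4}(2)$, $\CE^{2,2,1}(3)$, $\CE^{2,2,2}(3)$, which require $\CE^{5}(1)$, $\CE^{3,1}(2)$, $\CE^{3,2}(2)$, $\CE^{3,3}(2)$, and hence $\CE^{3}$, $\CE^{4}$ in various twists --- a cascade through diagrams with $h(\lambda)+w(\lambda)=6$ that lie outside both your collection and the Kuznetsov--Polishchuk blocks, each step needing a verification that every other term of the chosen twisted staircase complex is already in $\CC$. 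Until that case analysis is written out, the proposal is a plan rather than a proof.

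For comparison, the paper's proof sidesteps the twist-$6$ objects entirely: since Theorem~\ref{thm:meta} (fullness of the collection~\eqref{eq:kp-coll}) is already available, it suffices to show that the category generated by~\eqref{eq:510} contains every object of that known full collection, listed in~\eqref{eq:510ce}; all of these have twists at most $5$ and shapes in the boxes $\You_{i,5-i}$, and the containments are established in eleven explicit staircase-complex steps. Your route would have to reproduce most of those eleven steps and then add further ones for the overflow shapes $(5)$, $(4,4)$, $(3,3,3)$, $(2,2,2,2)$, so even if completed it would be longer; if you wish to keep your approach, the missing piece is precisely this explicit, correctly ordered list of resolutions.
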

\begin{proof}
  Let $\CT$ denote the full triangulated subcategory generated by the exceptional
  collection~\eqref{eq:510}. By Theorem~\ref{thm:meta}, it is enough to show that
  $\CT$ contains all the objects from the full exceptional collection
  \begin{equation*}
    D^b(\LGr(5, 10)) = \langle \CB_0, \CB_1, \CB_2, \CB_3, \CB_4, \CB_5\rangle,
  \end{equation*}
  where
  \begin{align}\label{eq:510ce}
    \begin{split}
    \CB_0 & = \langle \CO \rangle, \\
    \CB_1 & = \langle \CO(1),\ \CU^*(1),\ \CE^{2}(1),\ \CE^3(1),\ \CE^4(1) \rangle, \\
    \CB_2 & = \langle \CO(2),\ \CU^*(2),\ \Lambda^2\CU^*(2),\ \CE^{2}(2),\ \CE^{2, 1}(2),\ \CE^{2, 2}(2),\ \CE^{3}(2),\ \CE^{3,1}(2),\ \CE^{3,2}(2), \CE^{3,3}(2)\rangle, \\
    \CB_3 & = \langle \CO(3),\ \CU^*(3),\ \Lambda^2\CU^*(3),\ \CE^{2}(3),\ \CE^{2, 1}(3),\ \CE^{2, 2}(3),\ \Lambda^3\CU^*(3),\ \CE^{2, 1, 1}(3),\ \CE^{2, 2, 1}(3),\ \CE^{2, 2, 2}(3)\rangle, \\
    \CB_4 & = \langle \CO(4),\ \CU^*(4),\ \Lambda^2\CU^*(4),\ \Lambda^3\CU^*(4),\ \Lambda^4\CU^*(4) \rangle, \\
    \CB_5 & = \langle \CO(5) \rangle.
    \end{split}
  \end{align}
  Quite a number of the objects from~\eqref{eq:510ce} are trivially in $\CT$. We need to deal with
  the remaining ones:
  \begin{align}\label{eq:510ntriv}
    \begin{split}
      &\CE^3(1),\ \CE^4(1), \\
      &\CE^{3}(2),\ \CE^{2, 2}(2),\ \CE^{3,1}(2),\ \CE^{3,2}(2), \CE^{3,3}(2), \\
      &\Lambda^3\CU^*(3),\ \CE^{2, 2}(2),\ \CE^{2, 1, 1}(3),\ \CE^{2, 2, 1}(3),\ \CE^{2, 2, 2}(3), \\
      &\Lambda^3\CU^*(4),\ \Lambda^4\CU^*(4). \\
    \end{split}
  \end{align}
  The proof is split into steps. At each step we write some exact complex (built of staircase
  complexes), all but one of whose terms are already known to be in $\CT$ (from the previous
  steps).

  \textit{Step 1:} $\CE^{3}(t)\in\CT$ for $t=1, \ldots, 5$. Enough to look at the twisted
  staircase complex
  \begin{equation*}
    0\to \Lambda^2\CU^*(t-1)\to V^{[3]}\otimes\CO(t)\to V^{[2]}\otimes\CU^*(t)\to
    V\otimes\CE^{2}(t)\to \CE^{3}(t)\to 0.
  \end{equation*}

  \textit{Step 2:} $\CE^{4}(t)\in\CT$ for $t=1, \ldots, 5$. Enough to look at the twisted
  staircase complex
  \begin{equation*}
    0\to \CU^*(t-1)\to V^{[4]}\otimes\CO(t)\to V^{[3]}\otimes\CU^*(t)\to
    V^{[2]}\otimes\CE^{2}(t)\to V\otimes\CE^{3}(t)\to \CE^{4}\to 0.
  \end{equation*}

  \textit{Step 3:} $\Lambda^3\CU^*(t)\in\CT$ for $t=0, \ldots, 4$. Enough to look at the twisted
  staircase complex
  \begin{equation*}
    0\to \Lambda^3\CU^*(t)\to V^{[2]}\otimes\CO(t+1)\to
    V\otimes\CU^*(t+1)\to \CE^{2}(t+1)\to 0.
  \end{equation*}

  \textit{Step 4:} $\Lambda^4\CU^*(t)\in\CT$ for $t=0, \ldots, 4$. Since $\Lambda^4\CU^*\simeq \CU(1)$,
  Enough to look at the twisted tautological short exact sequence
  \begin{equation*}
    0\to \Lambda^4\CU^*(t)\simeq \CU(t+1)\to V\otimes\CO(t+1)\to \CU^*(t+1)\to 0.
  \end{equation*}
  (Which also happens to be a staircase complex.)

  \textit{Step 5:} $\CE^{2, 1, 1}(t)\in\CT$ for $t=0, \ldots, 4$. Enough to look at the twisted
  staircase complex
  \begin{equation*}
    0\to \CE^{2, 1, 1}(t)\to V^{[3]}\otimes\CO(t+1)\to
    V\otimes\Lambda^2\CU^*(t+1)\to \CE^{2,1}(t+1)\to 0.
  \end{equation*}

  \textit{Step 6:} $\CE^{2, 2}(t)\in\CT$ for $t=0, \ldots, 5$. For $t=2,\ldots,5$, it is
  enough to look at the
  complex~\eqref{eq:22ext} twisted by~$\CO(t-2)$.

  \textit{Step 7:} $\CE^{3, 1}(t)\in\CT$ for $t=1, \ldots, 5$. Enough to look at the twisted
  staircase complex
  \begin{equation*}
    0\to \CE^{2,1}(t-1)\to V^{[4]}\otimes\CO(t)\to V^{[2]}\otimes\Lambda^2\CU^*(t)\to
    V\otimes\CE^{2,1}(t)\to \CE^{3,1}(t)\to 0.
  \end{equation*}

  \textit{Step 8:} $\CE^{2, 2, 1}(t)\in\CT$ for $t=0, \ldots, 3$. Enough to look at the twisted
  staircase complex
  \begin{equation*}
    0\to \CE^{2, 2, 1}(t)\to V^{[4]}\otimes\CO(t+1)\to
    V\otimes\Lambda^3\CU^*(t+1)\to \CE^{2,1,1}(t+1)\to 0.
  \end{equation*}

  \textit{Step 9:} $\CE^{3, 2}(t)\in\CT$ for $t=2, \ldots, 5$. Enough to look at the twisted
  staircase complex
  \begin{equation*}
    0\to \CE^{3,1}(t-1)\to V^{[5]}\otimes\CU^*(t)\to V^{[3]}\otimes\Lambda^2\CU^*(t)\to
    V\otimes\CE^{2,2}(t)\to \CE^{3,2}(t)\to 0.
  \end{equation*}

  \textit{Step 10:} $\CE^{2, 2, 2}(t)\in\CT$ for $t=0,\ldots,3$. Enough to look at the Yoneda
  product of the twisted staircase complexes
  \begin{equation*}
    0\to \CE^{2,2,2}(t)\to V^{[5]}\otimes\CO(t+1)\to V\otimes\Lambda^4\CU^*(t+1)\to \CE^{2, 1, 1, 1}(t+1)\to 0
  \end{equation*}
  and
  \begin{equation*}
    0\to \CE^{2, 1, 1, 1}(t+1)\to V^{[2]}\otimes\CO(t+2)\to \Lambda^2\CU^*(t+2)\to 0.
  \end{equation*}

  \textit{Step 11:} $\CE^{3, 3}(t)\in\CT$ for $t=2,\ldots,5$. Enough to look at the Yoneda
  product of the twisted staircase complexes
  \begin{equation*}
    0\to \CE^{4,1}(t-1)\to V^{[4]}\otimes\CE^{2}(t)\to V^{[3]}\otimes\CE^{2,1}(t)\to V^{[2]}\otimes\CE^{2,2}(t)\to
    \CE^{3,3}(t)\to 0
  \end{equation*}
  and
  \begin{equation*}
    0\to \CE^{2}(t-2)\to V^{[5]}\otimes\CO(t-1)\to V^{[3]}\otimes\Lambda^2\CU^*(t-1)\to
    V^{[2]}\otimes\CE^{2,1}(t-1)\to V\otimes\CE^{3,1}(t-1)\to \CE^{4,1}(t-1)\to 0.
  \end{equation*}

  It follows from Steps 1 through 11 that all the objects from~\eqref{eq:510ntriv} belong to $\CT$.
\end{proof}

\begin{proof}[Proof of Theorem~\ref{thm:510}]
  Follows from Propositions~\ref{prop:510exc} and~\ref{prop:510full}.
\end{proof}

\begin{remark}\label{rm:pol-sam}
  In the paper~\cite{Sam-Pol} the authors construct a exceptional collection
  in $D^b(\LGr(5, 10))$, which is very close to being minimal Lefschetz.
  Unfortunately, it was recently discovered by M.~Smirnov that their collection
  is not exceptional.
\end{remark}

\appendix

\section{Cohomological computations}

\subsection{Borel--Bott--Weil theorems}
Here we collect individual statements, all of which are particular cases
of the celebrated Borel--Bott--Weil theorem. We present the most concrete
statements.
Proofs for all of them can be found in the excellent book~\cite{Wey}.

In the following, given two sequences
$\alpha = (\alpha_1,\alpha_2,\ldots,\alpha_a)\in\ZZ^a$ and
$\beta = (\beta_1,\beta_2,\ldots,\beta_b)\in\ZZ^b$, we denote by $(\alpha,\beta)$
their concatenation
\[
  (\alpha,\beta)=(\alpha_1,\alpha_2,\ldots,\alpha_a,
  \beta_1,\beta_2,\ldots,\beta_b)\in\ZZ^{a+b},
\]

Let us begin with the relative classical Grassmannian case.

\begin{proposition}[Relative Borel--Bott--Weil]\label{prop:bbwrel}
  Let $X$ be a smooth projective variety, let $\CV$ be a rank $n$ vector bundle
  on $X$, and let $0<k<n$ be an integer. Consider the relative Grassmannian
  \begin{equation*}
    p: \Gr_X(k, \CV)\to X.
  \end{equation*}
  We denote the pullback of $\CV$ on $\Gr(k, \CV)$ by the same letter,
  and denote by $\CU\subset \CV$ the universal rank $k$ subbundle on $\Gr_X(k, \CV)$.

  Let $\lambda\in\You_{n-k}$, $\mu\in\You_k$, and put $\rho=(n, n-1, \ldots, 1)$. Then
  \begin{equation*}
    R^\bullet p_*\left(\Sigma^\lambda\!\left(\CV/\CU\right)\otimes \Sigma^\mu\CU\right) = 
    \begin{cases}
      \Sigma^{w\cdot(\lambda,\mu)}\CV [-\ell(w)], & \text{if all the elements in } \rho+(\lambda,\mu) \text{ distinct,}\\
      0, & \text{otherwise,}
    \end{cases}
  \end{equation*}
  where $w\in\mathfrak{S}_n$ denotes the unique permutation such that the sequence
  $w((\alpha,\beta)+\rho)$ is strictly decreasing,
  $w\cdot (\lambda,\mu) = w((\rho+(\lambda,\mu))-\rho$, and $\ell(w)$ is the number
  of pairs $1\leq i<j\leq n$ such that $w(i)>w(j)$.
\end{proposition}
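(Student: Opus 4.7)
The plan is to reduce to the classical (absolute) Borel--Bott--Weil theorem on $\Gr(k, n)$ via the associated bundle construction. Let $P\to X$ be the frame bundle of $\CV$, which is a principal $\GL_n$-bundle. There is a natural identification $\Gr_X(k, \CV)\simeq P\times^{\GL_n}\Gr(k,n)$, and under this identification the tautological subbundle $\CU$ and the quotient $\CV/\CU$ correspond to the $\GL_n$-equivariant tautological bundles on $\Gr(k, n)$. In particular, $\Sigma^\lambda(\CV/\CU)\otimes\Sigma^\mu\CU$ is the associated bundle to the $\GL_n$-equivariant bundle $\Sigma^\lambda(V/U)\otimes\Sigma^\mu U$ on $\Gr(k, n)$, where $V$ is the standard representation.

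Next I would observe that since $p$ is smooth and proper, the formation of $Rp_*$ commutes with flat base change. Pulling back by $P\to X$ (faithfully flat, in fact smooth), the relative Grassmannian trivializes as $P\times\Gr(k,n)$, and $Rp_*$ becomes (by Künneth) $H^\bullet(\Gr(k,n),-)\otimes\CO_P$ applied to the pullback. By $\GL_n$-equivariance and descent, $R^ip_*\bigl(\Sigma^\lambda(\CV/\CU)\otimes\Sigma^\mu\CU\bigr)$ is itself an associated bundle, built from the $\GL_n$-representation $H^i(\Gr(k, n),\,\Sigma^\lambda(V/U)\otimes\Sigma^\mu U)$ via the frame bundle $P$.

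It remains to identify these cohomology groups using the classical Borel--Bott--Weil theorem. The bundle $\Sigma^\lambda(V/U)\otimes\Sigma^\mu U$ on the partial flag variety $\Gr(k, n)=\GL_n/P_k$ is the homogeneous bundle induced from the irreducible representation of the Levi $\GL_{n-k}\times\GL_k$ with highest weight $(\lambda,\mu)$. The BBW theorem then states that if $\rho+(\lambda,\mu)$ has a repeated entry, all cohomology vanishes; otherwise the unique permutation $w$ making $w(\rho+(\lambda,\mu))$ strictly decreasing places the cohomology in degree $\ell(w)$, equal to the irreducible $\GL_n$-representation of highest weight $w\cdot(\lambda,\mu)=w(\rho+(\lambda,\mu))-\rho$. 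Translating back through the associated bundle construction, the $\GL_n$-representation $\Sigma^{w\cdot(\lambda,\mu)}V$ corresponds precisely to the bundle $\Sigma^{w\cdot(\lambda,\mu)}\CV$ on $X$, which yields the claimed formula.

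The main obstacle is nothing other than the absolute Borel--Bott--Weil theorem itself, and this is a standard result (see~\cite{Wey}); the passage from the absolute to the relative statement is formal once the associated bundle framework is in place. A minor bookkeeping point is to check that the two possible normalizations of $\rho$ (the author's $(n,n-1,\ldots,1)$ versus the more common $(n-1,\ldots,0)$) lead to identical dot actions, which is immediate since $w\cdot\lambda=w(\lambda+\rho)-\rho$ is invariant under adding a $w$-fixed vector (e.g.\ a constant sequence) to $\rho$.
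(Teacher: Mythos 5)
Your argument is correct, but note that the paper does not actually prove this proposition: it is quoted as a known particular case of Borel--Bott--Weil with a reference to~\cite{Wey}, where the theorem is in fact already stated in the relative form for flag bundles. What you supply is the standard reduction of the relative statement to the absolute one, and the three ingredients you identify are exactly the ones needed: (i) $\Sigma^\lambda(\CV/\CU)\otimes\Sigma^\mu\CU$ is the bundle associated, via the frame bundle $P$ of $\CV$, to the $\GL_n$-equivariant bundle $\Sigma^\lambda(V/U)\otimes\Sigma^\mu U$ on $\Gr(k,n)$; (ii) flat base change along $P\to X$ plus the K\"unneth-type identification $R^ipr_{1*}\,pr_2^*\CE\simeq H^i(\Gr(k,n),\CE)\otimes\CO_P$, together with $\GL_n$-equivariant descent (equivalently, naturality of the Borel--Bott--Weil isomorphism in $V$, which is what makes the descended sheaf the associated bundle of the representation $H^i$ rather than merely a twisted form of it); and (iii) invariance of the dot action under replacing $\rho=(n-1,\ldots,0)$ by $(n,\ldots,1)$, since permutations fix constant vectors. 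The only point worth making explicit if you wrote this out in full is the equivariance in (ii): you need the base-change isomorphism to be compatible with the $\GL_n$-action on $P\times\Gr(k,n)$, which holds because all the sheaves and morphisms involved are canonically equivariant; alternatively one can avoid torsor descent altogether by trivializing $\CV$ Zariski-locally and observing that the transition functions act on the local cohomology sheaves through the representation $\Sigma^{w\cdot(\lambda,\mu)}$, which glues them into $\Sigma^{w\cdot(\lambda,\mu)}\CV$. Either way, your reduction buys a self-contained derivation from the classical homogeneous statement, whereas the paper simply outsources the whole proposition (in its relative form) to Weyman's book.
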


\begin{proposition}[Isotropic Borel--Bott--Weil]\label{prop:bbwiso}
  Let $V$ be a $2n$-dimensional symplectic vector space, and let $0<w<n$ be an integer.
  Consider the isotropic Grassmannian $\IGr(w, V)$, and denote by $\CW$ the~tautological
  rank $w$ vector bundle. Let us say that a weight $\lambda\in \You_n$ is regular
  if all the absolute values $|\lambda_i|$ are positive and distinct.
  Let $\rho=(n, n-1, \ldots, 1)\in \You_n$. We also consider the action
  of the group $\mathfrak{S}_n \ltimes \ZZ_2^n$ on $\You_n$ by permutations and sign changes.
  
  Given $\alpha\in\You_w$ and $\beta\in\You_{n-w}$,
  \begin{equation*}
    H^\bullet(\Sigma^\alpha\CW^*\otimes\SpS{(\PerpQ{\CW})}{\beta}) = \begin{cases}
      \SpS{V}{w\cdot (\alpha,\beta)}[-\ell(w)] & \text{if } \rho+(\alpha,\beta) \text{ is regular}, \\
      0 & \text{otherwise},
    \end{cases}
  \end{equation*}
  where $w\in \mathfrak{S}_n \ltimes \ZZ_2^n$ is the unique element such that the sequence
  $w(\rho+(\alpha, \beta))$ is positive and strictly decreasing, and
  $w\cdot (\alpha,\beta) = w((\rho+(\alpha,\beta))-\rho$.
  The number $\ell(w)$ can be computed from $\tau=\rho+(\alpha,\beta)$
  as the number of pairs $1\leq i<j\leq n$ such that $\tau_i < \tau_j$ plus
  the number of pairs $1\leq i<j\leq n$ such that $\tau_i + \tau_j < 0$ plus
  the number of negative elements in $\tau$.
\end{proposition}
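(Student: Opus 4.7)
The plan is to recognize Proposition~\ref{prop:bbwiso} as a specialization of the general Borel--Bott--Weil theorem for the semisimple group $\bfG = \SP(V)$ and the maximal parabolic $\bfP$ with $\bfG/\bfP \simeq \IGr(w, V)$. The Levi quotient of $\bfP$ is isomorphic to $\GL_w \times \SP_{2(n-w)}$, acting on the fibers via its standard action on $\CW$ (through $\GL_w$) and on $\PerpQ{\CW}$ (through $\SP_{2(n-w)}$). Consequently, the bundle $\Sigma^\alpha\CW^*\otimes\SpS{(\PerpQ{\CW})}{\beta}$ is the $\bfG$-equivariant vector bundle associated with the irreducible $\bfP$-representation of highest weight $(\alpha,\beta)$ for the Levi quotient. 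The first step is to make this identification precise and record that the entries of $(\alpha,\beta)$, written in the standard basis of the weight lattice, are exactly the coordinates appearing in Proposition~\ref{prop:bbwiso}.

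Next, I would compute the cohomology by factoring the projection $\bfG/\bfB \to \bfG/\bfP$ through the complete flag variety. Pulling back along this affine-bundle-like map and using that the fiber cohomology produces precisely the irreducible Levi representation, one reduces to computing cohomology of line bundles on $\bfG/\bfB$, which is the content of the classical Borel--Bott--Weil theorem. That theorem asserts: writing $\tau = \rho + (\alpha, \beta)$, the cohomology vanishes unless $\tau$ is \emph{regular}, i.e.\ lies in the interior of some Weyl chamber; in the regular case, a unique $w \in W = \mathfrak{S}_n \ltimes \ZZ_2^n$ carries $\tau$ to the dominant chamber (i.e.\ makes its entries strictly positive and strictly decreasing), and the cohomology is concentrated in degree $\ell(w)$ and equals the irreducible $\bfG$-module of highest weight $w\cdot(\alpha,\beta) = w(\tau)-\rho$, namely $\SpS{V}{w\cdot(\alpha,\beta)}$. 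Regularity of $\tau$ in type $C_n$ precisely means the absolute values $|\tau_i|$ are positive and pairwise distinct, matching the hypothesis.

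The main combinatorial point, which I expect to be the chief obstacle to a fully rigorous writeup, is verifying the explicit three-term formula for $\ell(w)$ in the statement. I would do this by the standard recipe of counting positive roots sent to negative roots by $w$: the positive roots of $C_n$ split as $\{e_i - e_j \mid i<j\}$, $\{e_i + e_j \mid i<j\}$, and $\{2e_i\}$. Applying $w$ to $\tau$ sorts the entries so that the result is strictly positive and decreasing; tracing through which short/long roots must be reflected to achieve this shows that the three summands in the formula correspond exactly to the three types of positive roots. The three counts are then: inversions $\tau_i < \tau_j$ for $i<j$ (short roots $e_i - e_j$), pairs with $\tau_i + \tau_j < 0$ (short roots $e_i + e_j$), and negative entries of $\tau$ (long roots $2e_i$). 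Summing gives $\ell(w)$, completing the proof.
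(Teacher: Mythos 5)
Your proposal is correct and coincides with the paper's treatment: the paper gives no independent proof of Proposition~\ref{prop:bbwiso} but cites~\cite{Wey}, and your argument --- specializing the general Borel--Bott--Weil theorem to $\bfG=\SP(V)$ and the maximal parabolic $\bfP$ stabilizing a $w$-dimensional isotropic subspace, with Levi $\GL_w\times\SP_{2(n-w)}$, and computing $\ell(w)$ as the number of positive roots of $C_n$ (of the three kinds $e_i-e_j$, $e_i+e_j$, $2e_i$) on which $\tau=\rho+(\alpha,\beta)$ pairs negatively --- is exactly the standard derivation underlying that reference, and your three counts match the three families of roots. Only a cosmetic slip: the projection from the complete flag variety of $\bfG$ to $\bfG/\bfP$ is a proper fibration with fiber the flag variety of the Levi, not an affine-bundle-like map; the reduction works, as you in substance use, because the weight $(\alpha,\beta)$ is Levi-dominant, so the fiberwise cohomology sits in degree zero and yields the irreducible Levi representation defining the bundle $\Sigma^\alpha\CW^*\otimes\SpS{(\PerpQ{\CW})}{\beta}$.
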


\begin{proposition}[Relative Lagrangian Borel--Bott--Weil]\label{prop:bbwrellg}
  Let $X$ be a scheme, and let $\CV$ be a rank $2n$ symplectic vector bundle on $X$.
  Consider the relative Lagrangian Grassmannian $p:\LGr(n, \CV)\to X$, and let
  $\CU$ denote the relative tautological bundle of rank $n$. For any $\lambda\in\You_n$
  one has
  \begin{equation*}
    R^\bullet p_*\Sigma^\lambda\CU^* = \begin{cases}
      \SpS{\CV}{w\cdot \lambda}[-\ell(w)] & \text{if } \rho+\lambda \text{ is regular}, \\
      0 & \text{otherwise},
    \end{cases}
  \end{equation*}
  where $w\in \mathfrak{S}_n \ltimes \ZZ_2^n$ is the unique element such that the sequence
  $w(\rho+\lambda)$ is positive and strictly decreasing, and $\ell(w)$ can be computed from
  $\tau=\rho+\lambda$ as in the previous proposition.
\end{proposition}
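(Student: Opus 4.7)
\medskip

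The plan is to reduce this relative statement to its absolute counterpart (the classical Lagrangian Borel--Bott--Weil theorem on $\LGr(n,V)$ for $V$ a $2n$-dimensional symplectic vector space) via the associated bundle construction and flat base change.

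\medskip

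First, I would let $\pi\colon P\to X$ denote the principal $\SP_{2n}$-bundle of symplectic frames of $\CV$, so that $\CV\simeq P\times^{\SP_{2n}}V$ where $V=\kk^{2n}$ is endowed with its standard symplectic form. By construction of the relative Lagrangian Grassmannian, there is a canonical isomorphism of $X$-schemes
\begin{equation*}
  \LGr(n,\CV)\;\simeq\; P\times^{\SP_{2n}}\LGr(n,V),
\end{equation*}
and under this identification the relative tautological subbundle $\CU$ on $\LGr(n,\CV)$ is precisely the bundle associated via $P$ to the $\SP_{2n}$-equivariant tautological bundle on $\LGr(n,V)$ (which, abusing notation, we also call $\CU$). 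In particular, the equivariant vector bundle $\Sigma^\lambda\CU^*$ on $\LGr(n,V)$ produces, under the associated bundle construction, the bundle $\Sigma^\lambda\CU^*$ on $\LGr(n,\CV)$.

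\medskip

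Second, I would invoke flat base change along $\pi$. The formation of $R^\bullet p_*$ is compatible with base change along the smooth faithfully flat morphism $\pi\colon P\to X$, and on $P$ the relative Lagrangian Grassmannian becomes the trivial family $P\times \LGr(n,V)$. Hence, as $\SP_{2n}$-equivariant sheaves on $P$,
\begin{equation*}
  \pi^* R^i p_*\Sigma^\lambda\CU^*\;\simeq\; H^i\!\left(\LGr(n,V),\,\Sigma^\lambda\CU^*\right)\otimes_{\kk}\CO_P,
\end{equation*}
and by faithfully flat descent $R^i p_*\Sigma^\lambda\CU^*$ is simply the vector bundle on $X$ associated via $P$ to the $\SP_{2n}$-representation $H^i(\LGr(n,V),\Sigma^\lambda\CU^*)$.

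\medskip

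Third, I would apply the classical (absolute) Lagrangian Borel--Bott--Weil theorem on $\LGr(n,V)$, a special case of the general Borel--Bott--Weil theorem treated in~\cite{Wey}: this representation is zero unless $\rho+\lambda$ is regular, and otherwise it equals $\SpS{V}{w\cdot\lambda}$ concentrated in degree $\ell(w)$, where $w$ and $\ell(w)$ are as in the statement. Since by definition $\SpS{\CV}{w\cdot\lambda}$ is the bundle on $X$ associated via $P$ to the $\SP_{2n}$-representation $\SpS{V}{w\cdot\lambda}$, the conclusion follows. The main (and really only) point to verify is the compatibility of the associated bundle construction with the Schur functor $\Sigma^\lambda$ and with the symplectic Schur functor $\SpS{(-)}{w\cdot\lambda}$; both are essentially tautological, as these functors are defined precisely by applying the corresponding $\GL_n$- or $\SP_{2n}$-representation fiberwise to a principal bundle. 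There is no genuine obstacle here: the whole statement is a formal globalization of the absolute case, and the formula for $\ell(w)$ is inherited verbatim from Proposition~\ref{prop:bbwiso}.
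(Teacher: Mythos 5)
Your argument is correct, but it is genuinely a different route from the paper's: the paper offers no proof of its own for the appendix statements, referring for all of them to Weyman's book, where Bott's theorem is established \emph{directly in the relative setting} (for flag and isotropic Grassmannian bundles of a vector bundle equipped with a symplectic form, by induction through relative flag bundles reducing to $\PP^1$-bundle computations). You instead reduce the relative statement to the absolute theorem on $\LGr(n,V)$ via the torsor $P$ of symplectic frames, flat base change, and descent. This works: $p$ is qcqs and $\pi$ is flat, so $\pi^*R^ip_*\Sigma^\lambda\CU^*\simeq H^i(\LGr(n,V),\Sigma^\lambda\CU^*)\otimes_\kk\CO_P$ by flat base change applied to the trivialized family over $P$; moreover, since a symplectic bundle admits a symplectic frame Zariski-locally on $X$ (a unimodular alternating form over a local ring has a symplectic basis), $P\to X$ is even Zariski-locally trivial, so the descent step is elementary gluing rather than genuine fppf descent. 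The one point you rightly flag as needing verification is that the canonical base-change isomorphism intertwines the descent data on $H^i\otimes\CO_P$ with the $\SP_{2n}$-action on $H^i(\LGr(n,V),\Sigma^\lambda\CU^*)$ — this is what identifies $R^ip_*\Sigma^\lambda\CU^*$ with the associated bundle of that representation — and that the absolute Borel--Bott--Weil theorem computes this cohomology \emph{as an $\SP(V)$-representation}, namely $\SpS{V}{w\cdot\lambda}$ in degree $\ell(w)$ or zero; both are standard. In the end you still cite the absolute case from the same source the paper cites for the relative one, so the trade-off is: your reduction makes explicit exactly where the representation theory enters and costs only a short descent argument, while the direct relative proof in Weyman avoids the torsor formalism altogether and covers all four appendix statements (Propositions~\ref{prop:bbwrel}--\ref{prop:bbwrellg}) uniformly, which is why the paper can dispose of them with a single reference.
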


\subsection{Borel--Bott--Weil computations}

The following lemmas are trivial; we include their proofs for the sake of completeness.

\begin{lemma}\label{lm:gr-0}
  Let $X$ be a scheme, let $\CV$ be a rank $n$ vector bundle on $X$, and let $0<k<n$
  be an integer.
  Consider the relative Grassmannian $p:\Gr(k, \CV)\to X$, and denote by $\CU$ the tautological
  rank $k$ vector bundle on $\Gr(k, V)$.
  If $\lambda\in\You_{n-k}$ is such $\lambda_{n-k}\geq -k$, then
  \begin{equation*}
    R^ip_* \Sigma^\lambda(\CV/\CU) = \begin{cases}
      \Sigma^\lambda\CV & \text{if } \lambda_{n-k}\geq 0 \text{ and } i = 0, \\
      0 & \text{otherwise.}
    \end{cases}
  \end{equation*}
\end{lemma}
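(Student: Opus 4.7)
The plan is to apply the Relative Borel--Bott--Weil theorem (Proposition~\ref{prop:bbwrel}) with the second weight taken to be trivial. Specifically, I would set $\mu = (0, 0, \ldots, 0) \in \You_k$, so that $\Sigma^\mu \CU \simeq \CO_{\Gr_X(k,\CV)}$, and consider the concatenation $(\lambda, \mu) = (\lambda_1, \ldots, \lambda_{n-k}, 0, \ldots, 0) \in \ZZ^n$. With $\rho = (n, n-1, \ldots, 1)$, the sequence
\[
\rho + (\lambda, \mu) = (n+\lambda_1,\ n-1+\lambda_2,\ \ldots,\ k+1+\lambda_{n-k},\ k,\ k-1,\ \ldots,\ 1)
\]
will be the object of study.

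Since $\lambda$ is weakly decreasing, the first $n-k$ entries of $\rho + (\lambda, \mu)$ are already strictly decreasing, and each of them is strictly greater than $k+1+\lambda_{n-k}$. So the only possible coincidence among the entries is between $k+1+\lambda_{n-k}$ and one of the values $k, k-1, \ldots, 1$. Under the hypothesis $\lambda_{n-k} \geq -k$, we have $k+1+\lambda_{n-k} \geq 1$, so I would distinguish two cases:

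First, if $\lambda_{n-k} \geq 0$, then $k+1+\lambda_{n-k} \geq k+1$, so the full sequence $\rho + (\lambda, \mu)$ is strictly decreasing. Thus the unique straightening permutation $w$ is the identity, $\ell(w) = 0$, and $w\cdot(\lambda,\mu) = (\lambda,\mu) = (\lambda, 0)$, so Proposition~\ref{prop:bbwrel} yields $R^\bullet p_*\Sigma^\lambda(\CV/\CU) \simeq \Sigma^{(\lambda,0)}\CV = \Sigma^\lambda\CV$ concentrated in degree zero. Second, if $-k \leq \lambda_{n-k} \leq -1$, writing $\lambda_{n-k} = -j$ for some $1 \leq j \leq k$, we get $k+1+\lambda_{n-k} = k+1-j$, which coincides with the entry $k+1-j$ already present among $k, k-1, \ldots, 1$. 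Hence $\rho + (\lambda,\mu)$ has repeated entries and Proposition~\ref{prop:bbwrel} gives $R^\bullet p_*\Sigma^\lambda(\CV/\CU) = 0$.

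There is no real obstacle here: the entire argument is a direct bookkeeping check of the regularity condition in Proposition~\ref{prop:bbwrel}. The only care needed is in verifying that the hypothesis $\lambda_{n-k} \geq -k$ exactly rules out the case $k+1+\lambda_{n-k} \leq 0$, which would otherwise have to be treated separately (and which the hypothesis is designed to exclude). Everything else follows immediately.
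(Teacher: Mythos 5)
Your proposal is correct and follows essentially the same route as the paper: apply the relative Borel--Bott--Weil theorem to $\rho+(\lambda,0)$, observe that $\lambda_{n-k}\geq 0$ makes the sequence strictly decreasing (identity permutation, $\ell(w)=0$, giving $\Sigma^\lambda\CV$ in degree $0$), while $-k\leq\lambda_{n-k}\leq -1$ forces $k+1+\lambda_{n-k}$ to repeat one of the tail entries $k,\ldots,1$, giving vanishing. (Your aside that the \emph{only} possible coincidence involves the $(n-k)$-th entry is slightly overstated---for $\lambda_{n-k}<0$ earlier entries can also collide with the tail---but this is never used, since vanishing needs only one repetition.)
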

\begin{proof}
  According to the relative Borel--Bott--Weil theorem, we need to study the sequence
  \begin{equation}\label{eq:gr-0}
    \rho+(\lambda, 0) = (n+\lambda_1, n-1+\lambda_2, \ldots, k+1+\lambda_{n-k}, k, k-1, \ldots, 1).
  \end{equation}
  If $0>\lambda_{n-k}\geq -k$, then the $(n-k)$-th term of~\eqref{eq:gr-0} satisfies
  the inequality $k\geq k+1+\lambda_{n-k}\geq 1$, and equals one of the last $k$ terms
  of the sequence. Otherwise, $\lambda_{n-k}\geq 0$, and the sequence~\eqref{eq:gr-0}
  is strictly decreasing.
\end{proof}

\begin{lemma}\label{lm:lgr-0}
  Let $X$ be a scheme and let $\CV$ be a symplectic vector bundle on $X$ of rank $2n$.
  Consider the relative Lagrangian Grassmannian $p:\LGr(n, \CV)\to X$, and denote by 
  $\CU$ the tautological subbundle on it. If $\lambda\in \You_n$ is such that $\lambda_n\geq -1$,
  then
  \begin{equation*}
    R^ip_* \Sigma^\lambda\CU^* = \begin{cases}
      \SpS{\CV}{\lambda} & \text{if } \lambda_n\geq 0 \text{ and } i = 0, \\
      0 & \text{otherwise.}
    \end{cases}
  \end{equation*}
\end{lemma}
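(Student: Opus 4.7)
The plan is to apply Proposition~\ref{prop:bbwrellg} directly and examine the sequence $\rho+\lambda$ under the hypothesis $\lambda_n\geq -1$. Since $\lambda\in\You_n$ is weakly decreasing and $\rho=(n,n-1,\ldots,1)$ is strictly decreasing, the sum $\rho+\lambda$ is automatically strictly decreasing. Moreover, all of its entries are bounded below by its last entry $1+\lambda_n$, which by hypothesis is at least $0$. So the two possible behaviors are entirely governed by the value of $\lambda_n$.

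In the first case, $\lambda_n\geq 0$, every entry of $\rho+\lambda$ is strictly positive, and since the sequence is strictly decreasing, the absolute values are positive and pairwise distinct. Thus $\rho+\lambda$ is regular in the sense of Proposition~\ref{prop:bbwrellg}, and the unique element $w\in \mathfrak{S}_n\ltimes \ZZ_2^n$ making $w(\rho+\lambda)$ positive and strictly decreasing is the identity. Hence $\ell(w)=0$ and $w\cdot\lambda=\lambda$, so $R^\bullet p_*\Sigma^\lambda\CU^*\simeq \SpS{\CV}{\lambda}$ concentrated in degree~$0$.

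In the remaining case $\lambda_n=-1$, the last entry of $\rho+\lambda$ equals $1+(-1)=0$, so $\rho+\lambda$ has a zero entry and therefore fails to be regular. By Proposition~\ref{prop:bbwrellg}, $R^\bullet p_*\Sigma^\lambda\CU^*=0$. This exhausts all subcases of $\lambda_n\geq -1$ and yields the stated formula. There is no genuine obstacle here: the entire content is the elementary observation that $\rho+\lambda$ is automatically strictly decreasing and nonnegative under our hypothesis, so the only way to lose regularity is for the final coordinate to vanish.
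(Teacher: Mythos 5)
Your proposal is correct and follows essentially the same route as the paper: apply the relative Lagrangian Borel--Bott--Weil theorem and note that $\rho+\lambda$ is automatically strictly decreasing, so it is regular with trivial Weyl element when $\lambda_n\geq 0$ and has a vanishing last entry $1+\lambda_n=0$ when $\lambda_n=-1$. Nothing is missing.
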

\begin{proof}
  According to the relative Lagrangian Borel--Bott--Weil theorem, we need to study the sequence
  \begin{equation}\label{eq:lgr-0}
    \rho+\lambda = (n+\lambda_1, n-1+\lambda_2, \ldots, 2+\lambda_2, 1+\lambda_1).
  \end{equation}
  If $\lambda_1=-1$, then the last term of~\eqref{eq:gr-0} equals $0$; thus, all the
  direct images vanish. Otherwise, the sequence~\eqref{eq:gr-0} consists of
  strictly decreasing positive numbers.
\end{proof}

Let $V$ be a $2n$-dimensional symplectic
vector space. Consider the isotropic Grassmannian $\IGr(w, V)$ for some $0<w\leq n$,
and denote by $\CW$ the tautological bundle on it.

\begin{lemma}\label{lm:igr-van}
  Let $\nu\in\You_w$ be such that $\nu_w\geq -(2n-2w+1)$. Then
  \begin{equation*}
    H^i(\IGr(w, V),\, \Sigma^\nu \CU^*) =
    \begin{cases}
      \SpS{V}{\nu} & \text{if } \nu_w \geq 0 \text{ and } i = 0, \\
      0 & \text{otherwise}.
    \end{cases}
  \end{equation*}
\end{lemma}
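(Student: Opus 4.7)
The plan is to apply the Isotropic Borel--Bott--Weil theorem (Proposition~\ref{prop:bbwiso}) directly, taking $\alpha = \nu \in \You_w$ and $\beta = 0 \in \You_{n-w}$. Since $\SpS{(\PerpQ{\CW})}{0}\simeq \CO$, the computation of $H^\bullet(\IGr(w, V),\, \Sigma^\nu\CW^*)$ reduces to analyzing the sequence
\begin{equation*}
  \rho + (\nu, 0) = (n+\nu_1,\ n-1+\nu_2,\ \ldots,\ n-w+1+\nu_w,\ n-w,\ n-w-1,\ \ldots,\ 1),
\end{equation*}
checking when it is regular (all absolute values positive and distinct), and when it is, identifying the unique Weyl group element that sorts it.

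First I would treat the case $\nu_w \geq 0$. Then $n-w+1+\nu_w\geq n-w+1$, so the first $w$ entries of $\rho+(\nu,0)$ are strictly decreasing and each exceeds $n-w$, while the last $n-w$ entries are the positive integers $n-w, n-w-1,\ldots,1$. The full sequence is therefore strictly decreasing and positive, hence regular, and the sorting element $w$ is the identity with $\ell(w)=0$. Proposition~\ref{prop:bbwiso} then gives $H^0 = \SpS{V}{\nu}$ with all higher cohomology vanishing, matching the nonzero case of the lemma.

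Next I would handle the case $-(2n-2w+1) \leq \nu_w < 0$ and show non-regularity, which forces all cohomology to vanish. The bounds on $\nu_w$ translate into
\begin{equation*}
  -(n-w) \;\leq\; n-w+1+\nu_w \;\leq\; n-w,
\end{equation*}
so the $w$-th entry of $\rho+(\nu,0)$ has absolute value in $\{0, 1, 2, \ldots, n-w\}$. But the last $n-w$ entries of the sequence are precisely $n-w, n-w-1, \ldots, 1$, whose absolute values exhaust $\{1,\ldots,n-w\}$. Hence either $n-w+1+\nu_w=0$ (so $\rho+(\nu,0)$ has a zero entry) or its absolute value coincides with that of one of the last $n-w$ entries (so $\rho+(\nu,0)$ has a repeated absolute value). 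In either situation the sequence is not regular, and Proposition~\ref{prop:bbwiso} gives vanishing.

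There is no serious obstacle here: the entire argument is an elementary inequality check, directly parallel to Lemmas~\ref{lm:gr-0} and~\ref{lm:lgr-0}. The only mildly delicate point is ensuring the threshold $-(2n-2w+1)$ is sharp: one negative unit further, namely $\nu_w = -(2n-2w+2)$, would make the $w$-th entry equal to $-(n-w+1)$, whose absolute value exceeds all the trailing entries and could make the sequence regular again (producing higher cohomology that we are deliberately not claiming to control).
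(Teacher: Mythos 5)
Your proposal is correct and follows essentially the same route as the paper: apply the isotropic Borel--Bott--Weil theorem to the weight $\rho+(\nu,0)$, observe that for $\nu_w\geq 0$ the sequence is strictly decreasing and positive (giving $\SpS{V}{\nu}$ in degree $0$), and that for $-(2n-2w+1)\leq\nu_w<0$ the $w$-th entry lies in $[-(n-w),\,n-w]$, so it is either zero or collides in absolute value with one of the trailing entries $n-w,\ldots,1$, forcing vanishing. Nothing is missing.
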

\begin{proof}
  According to the relative Lagrangian Borel--Bott--Weil theorem, we need to study the sequence
  \begin{equation}\label{eq:igr-van}
    \rho+(\nu, 0) = (n+\nu_1,\ n-1+\nu_2,\ \ldots,\ n-w+1+\nu_{w},\ n-w,\ n-w-1, \ldots,\ 1).
  \end{equation}
  If $0>\nu_{w}\geq -(2n-2w+1)$, then the $w$-th term of~\eqref{eq:igr-van} satisfies
  $n-w\geq n-w+1+\nu_{w}\geq -(n-w)$. In particular, it is either zero,
  or its absolute value coincides with one the the last $n-w$ terms of~\eqref{eq:igr-van}.
  Otherwise, $\nu_{w}\geq 0$, and the sequence~\eqref{eq:igr-van} is strictly decreasing.
\end{proof}

\begin{lemma}\label{lm:igr-0}
  If $\alpha\in\YD_{n-w}$ and $\beta\in\YD_w$, then
  \begin{equation}
    H^\bullet(\IGr(w, 2n), \SpS{(\PerpQ{\CW})}{\alpha}\otimes\Sigma^\beta\CW^*)^\bfG =
    \begin{cases}
      \kk & \text{if }\alpha=\beta=0, \\
      0 & \text{otherwise.}
    \end{cases}
  \end{equation}
\end{lemma}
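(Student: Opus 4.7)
The plan is to reduce the statement to a direct application of the isotropic Borel--Bott--Weil theorem (Proposition~\ref{prop:bbwiso}). First, I would observe that the equivariant bundle $\SpS{(\PerpQ{\CW})}{\alpha}\otimes\Sigma^\beta\CW^*$ corresponds, in the notation of that proposition, to the weight obtained by concatenating $\beta$ (of length $w$, coming from the $\CW^*$-factor) and $\alpha$ (of length $n-w$, coming from the $\SpS{(\PerpQ{\CW})}{}$-factor) — i.e.\ the pair $(\beta,\alpha)\in\You_n$. Consequently, $H^\bullet(\IGr(w, V),\,\SpS{(\PerpQ{\CW})}{\alpha}\otimes\Sigma^\beta\CW^*)$ is either zero (when $\rho+(\beta,\alpha)$ is not regular) or isomorphic to $\SpS{V}{w\cdot(\beta,\alpha)}[-\ell(w)]$ for the unique signed permutation $w$ making $w(\rho+(\beta,\alpha))$ positive and strictly decreasing.

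Next I would take $\bfG$-invariants. Since $\SpS{V}{\mu}$ is an irreducible $\SP(V)$-representation, its invariants are $\kk$ when $\mu=0$ and $0$ otherwise. So the entire problem reduces to deciding when $w\cdot(\beta,\alpha)=0$, equivalently $w(\rho+(\beta,\alpha))=\rho=(n,n-1,\ldots,1)$.

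The key combinatorial observation — and essentially the only step requiring any thought — is that because $\alpha$ and $\beta$ are Young diagrams, every entry of $\rho+(\beta,\alpha)$ is already $\geq 1$. Hence the sorting element $w$ acts as a pure permutation, with no sign changes involved. The identity $w(\rho+(\beta,\alpha))=\rho$ therefore just says that the two sequences agree as multisets; comparing total sums yields $|\alpha|+|\beta|=0$, which forces $\alpha=\beta=0$.

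Finally, I would check the converse by direct computation: when $\alpha=\beta=0$, the sequence $\rho$ is strictly decreasing and positive, so $w=\id$, $\ell(w)=0$, and Proposition~\ref{prop:bbwiso} returns $\SpS{V}{0}=\kk$ in degree zero, whose $\bfG$-invariants are $\kk$. The hardest part of the whole argument is really just keeping the orderings straight between the conventions of the present lemma and those of Proposition~\ref{prop:bbwiso}; everything else is a one-line sum comparison.
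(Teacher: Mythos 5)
Your argument is correct and coincides with the paper's own proof: both invoke the isotropic Borel--Bott--Weil theorem to reduce the question to whether the reordering of $\rho+(\beta,\alpha)$ equals $\rho$, and both conclude from nonnegativity of the entries of $\alpha$ and $\beta$ that this forces $\alpha=\beta=0$. The only cosmetic difference is that you make the multiset/sum comparison explicit and note that the sorting element must be a pure permutation (since all entries are already positive), whereas the paper leaves these small steps implicit.
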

\begin{proof}
  Using the Borel--Bott--Weil theorem, it is enough to check when the absolute
  values of the sequence
  \begin{equation*}
    \rho+(\beta,\alpha) = (n+\beta_1,\ n-1+\beta_2,\ \ldots,\ n-w+1+\beta_w,\ n-w+\alpha_1,\ \ldots,\ 1+\alpha_{n-k})
  \end{equation*}
  reordered in decreasing order coincide with $\rho$. Since all $\alpha_i$ and $\beta_j$
  are nonnegative, it can only happen when they are all equal to zero, in which case
  $\rho+(\beta,\alpha)=\rho$.
\end{proof}

\begin{lemma}\label{lm:igr-eq}
  If $\alpha,\beta\in \YD_{n-w}$, then
  \begin{equation*}
    \Ext_{\bfG}(\SpS{(\PerpQ{\CW})}{\alpha}, \SpS{(\PerpQ{\CW})}{\beta}) = \begin{cases}
      \kk & \text{if } \alpha = \beta, \\
      0 & \text{otherwise}.
    \end{cases}
  \end{equation*}
\end{lemma}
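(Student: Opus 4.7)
The plan is to reduce the computation of equivariant Ext to the cohomology vanishing already established in Lemma~\ref{lm:igr-0}. First, I would rewrite
\[
  \Ext^\bullet_{\bfG}(\SpS{(\PerpQ{\CW})}{\alpha}, \SpS{(\PerpQ{\CW})}{\beta})
  \simeq H^\bullet(\IGr(w, V),\,\SpS{(\PerpQ{\CW})}{\alpha}^* \otimes \SpS{(\PerpQ{\CW})}{\beta})^\bfG.
\]
Since $\PerpQ{\CW}$ is a rank $2(n-w)$ symplectic vector bundle, its structure group is $\SP_{2(n-w)}$, whose irreducible representations are all self-dual; hence $\SpS{(\PerpQ{\CW})}{\alpha}^* \simeq \SpS{(\PerpQ{\CW})}{\alpha}$.

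Next, I would decompose the tensor product $\SpS{(\PerpQ{\CW})}{\alpha} \otimes \SpS{(\PerpQ{\CW})}{\beta}$ into irreducible equivariant summands of the form $\SpS{(\PerpQ{\CW})}{\gamma}$. Applying Lemma~\ref{lm:igr-0} with the $\Sigma^\beta\CW^*$ factor taken to be $\CO$, the $\bfG$-invariant cohomology of each summand $\SpS{(\PerpQ{\CW})}{\gamma}$ vanishes unless $\gamma = 0$, in which case it contributes a single copy of $\kk$ concentrated in degree zero.

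It therefore suffices to compute the multiplicity of the trivial summand in the tensor product. By standard representation theory of $\SP_{2(n-w)}$, this multiplicity equals
\[
  \dim \Hom_{\SP_{2(n-w)}}(V_\alpha^*, V_\beta) = \dim \Hom_{\SP_{2(n-w)}}(V_\alpha, V_\beta) = \delta_{\alpha,\beta},
\]
using self-duality of irreducible symplectic representations together with Schur's lemma. Combining these observations gives the claim. There is essentially no obstacle here; the lemma is a bookkeeping consequence of Lemma~\ref{lm:igr-0} together with self-duality of symplectic irreducibles.
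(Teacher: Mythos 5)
Your proposal is correct and follows essentially the same route as the paper: self-duality of the symplectic bundle, decomposition of the tensor product into irreducible summands, vanishing of the $\bfG$-invariant cohomology of all nontrivial summands via Lemma~\ref{lm:igr-0}, and Schur's lemma to identify the multiplicity of the trivial summand with $\delta_{\alpha,\beta}$. No gaps.
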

\begin{proof}
  Since $\PerpQ{\CW}$ is self-dual,
  \begin{align*}
    \Ext_{\bfG}(\SpS{(\PerpQ{\CW})}{\alpha}, \SpS{(\PerpQ{\CW})}{\beta})
    & \simeq H^\bullet(\IGr(w, 2n), \SpS{(\PerpQ{\CW})}{\alpha}\otimes\SpS{(\PerpQ{\CW})}{\beta})^\bfG \\
    & \simeq \bigoplus H^\bullet(\IGr(w, 2n), \SpS{(\PerpQ{\CW})}{\nu})^\bfG,
  \end{align*}
  where $\nu$ runs over the irreducible direct summands in the decomposition
  \[
    \SpS{(\PerpQ{\CW})}{\alpha}\otimes\SpS{(\PerpQ{\CW})}{\beta} \simeq \bigoplus\, \SpS{(\PerpQ{\CW})}{\nu}.
  \]
  By the previous lemma, $H^\bullet(\IGr(w, 2n), \SpS{(\PerpQ{\CW})}{\nu})^\bfG=0$ unless $\nu=0$.
  In the latter case we immediately conclude that $\SpS{(\PerpQ{\CW})}{\alpha}\simeq\SpS{(\PerpQ{\CW})}{\beta}$.
  In particular, $\alpha=\beta$, and the claim follows trivially.
\end{proof}

Let us now recall one of the main computations in~\cite{Kap}.
We present it in the relative case.

\begin{lemma}[{\cite[Lemma~3.2]{Kap}}]\label{lm:gr-kap}
  Let $X$ be a scheme, let $\CU$ be a rank $n$ vector bundle on $X$, and let
  $0<w<n$ be an integer.
  Consider the relative Grassmannian $p:\Gr(w, \CU)$, and denote by $\CW$
  the~tautological rank $w$ vector bundle on $\Gr(w, \CU)$. For any
  $\lambda\in\You_{w, n-w}$ and $\mu\in\You_{n-w, w}$ one has
  \begin{equation*}
    R^\bullet p_*\left(\Sigma^\lambda\CW\otimes\Sigma^\mu(\CU/\CW)^*\right) = 
    \begin{cases}
      \CO_X[-|\lambda|] & \text{if }\lambda=\mu^T, \\
      0 & \text{otherwise.}
    \end{cases}
  \end{equation*}
\end{lemma}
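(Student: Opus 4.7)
The plan is to apply the relative Borel--Bott--Weil theorem (Proposition~\ref{prop:bbwrel}) directly. Using the standard isomorphism $\Sigma^\mu(\CU/\CW)^* \simeq \Sigma^{-\mu}(\CU/\CW)$ with $-\mu = (-\mu_{n-w}, \ldots, -\mu_1)$, the computation reduces to analyzing the sequence
\[
  \tau := \rho + (-\mu, \lambda), \qquad \rho = (n, n-1, \ldots, 1) \in \ZZ^n,
\]
whose first $n-w$ coordinates come from $-\mu$ and whose last $w$ coordinates come from $\lambda$. A straightforward reindexing shows that the entries in the first block form the set $\{v_j := w + j - \mu_j \mid j = 1, \ldots, n-w\}$, and the entries in the second block form the set $\{u_i := w - i + 1 + \lambda_i \mid i = 1, \ldots, w\}$. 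Since $\mu \in \You_{n-w, w}$ and $\lambda \in \You_{w, n-w}$, each $v_j$ and each $u_i$ lies in $\{1, \ldots, n\}$; moreover the $v_j$ are strictly increasing in $j$ and the $u_i$ are strictly decreasing in $i$.

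By Proposition~\ref{prop:bbwrel}, the higher direct image vanishes unless all entries of $\tau$ are distinct. Given the monotonicity observed above, this is equivalent to $\{v_j\} \cap \{u_i\} = \emptyset$, and in that case, by cardinality, the two sets partition $\{1, \ldots, n\}$. Whenever this occurs, $\tau$ is a permutation of $\rho$, so the dot-shifted weight $\sigma \cdot (-\mu, \lambda)$ vanishes, and the pushforward equals $\Sigma^0 \CU = \CO_X$ placed in cohomological degree $\ell(\sigma)$.

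Next, I would carry out the purely combinatorial identification: the partition condition on $(\lambda, \mu)$ holds if and only if $\lambda = \mu^T$. This is the classical lattice-path encoding of a Young diagram in a $w \times (n-w)$ box by a strictly decreasing $w$-element subset of $\{1, \ldots, n\}$ --- with $\lambda$ corresponding to $(u_i)_{i=1}^{w}$ --- combined with its transpose on the complementary subset; it is essentially the same binary-sequence bijection recalled in the Preliminaries. I would verify it directly from the defining formulas for $v_j$ and $u_i$.

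Finally, I would compute the shift. Since the first block of $\tau$ reads $v_{n-w}, v_{n-w-1}, \ldots, v_1$ (which is decreasing, as $v_j$ increases in $j$) and the second block is decreasing by construction, all inversions of $\tau$ occur between the two blocks. Hence $\ell(\sigma)$ equals the number of pairs $(i, j)$ with $v_j < u_i$, which, under the lattice-path picture, coincides with the number of boxes lying below the path carved out by $\lambda$ in the $w \times (n-w)$ rectangle, i.e., $|\lambda|$. The only genuine obstacle in the whole argument is this combinatorial bookkeeping --- identifying the partition condition with transposition and the inversion count with $|\lambda|$ --- both of which are classical, forming the content of Kapranov's original non-relative computation in~\cite{Kap}. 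The relative version then descends from the fibrewise statement, since Proposition~\ref{prop:bbwrel} is itself formulated sheaf-theoretically over the base $X$.
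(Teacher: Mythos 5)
Your argument is correct and is essentially the standard one: the paper itself does not reprove this lemma but cites Kapranov, whose computation is exactly your application of the relative Borel--Bott--Weil theorem (Proposition~\ref{prop:bbwrel}) to the weight $\rho+(-\mu,\lambda)$, with the lattice-path identification of the distinctness condition with $\lambda=\mu^T$ and of the inversion count with $|\lambda|$ --- the same combinatorial facts the paper later reuses in the proof of Proposition~\ref{prop:main}. Your reduction checks out in detail (the two blocks are separately decreasing, so all inversions are cross-block, and for each $i$ the number of $v_j<u_i$ equals $\lambda_i$), so there is no gap.
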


The following lemma could be proved by a computation similar to the one done in~\cite{Kap}.
For the sake of simplicity, we present a more geometric computation.

\begin{lemma}\label{lm:igr-kap}
  Assume $0<w<n$.
  For any $\lambda\in\You_{w, n-w}$ and $\mu\in\You_{n-w, w}$ one has
  \begin{equation*}
    H^\bullet\left(\IGr(w, V), \Sigma^\lambda\CW\otimes\SpS{(\PerpQ{\CW})}{\mu}\right) = 
    \begin{cases}
      \kk[-|\lambda|] & \text{if }\lambda=\mu^T, \\
      0 & \text{otherwise.}
    \end{cases}
  \end{equation*}
\end{lemma}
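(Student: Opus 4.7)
The plan is to prove this by a two-step pushforward along the two projections of the partial flag variety $\IFl(w, n; V)$, using the diagram~\eqref{eq:cd-pq}.

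First, I would recognize that the map $q: \IFl(w, n; V) \to \IGr(w, V)$ realizes the flag variety as the relative Lagrangian Grassmannian $\LGr_{\IGr(w, V)}(n-w, \PerpQ{\CW})$, where the relative tautological Lagrangian subbundle is precisely $\CU/\CW$. Since $\mu\in\You_{n-w, w}$ is a Young diagram (and in particular $\mu_{n-w}\geq 0$), the relative Lagrangian Borel--Bott--Weil computation in Lemma~\ref{lm:lgr-0} yields
\begin{equation*}
  q_*\Sigma^\mu (\CU/\CW)^* \simeq \SpS{(\PerpQ{\CW})}{\mu}.
\end{equation*}
Combined with the projection formula (using that $\CW$ is pulled back from $\IGr(w, V)$) this gives
\begin{equation*}
  H^\bullet\!\left(\IGr(w, V),\,\Sigma^\lambda\CW\otimes\SpS{(\PerpQ{\CW})}{\mu}\right)
  \simeq H^\bullet\!\left(\IFl(w, n; V),\,\Sigma^\lambda\CW\otimes\Sigma^\mu(\CU/\CW)^*\right).
\end{equation*}

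Next, I would use the other projection $p: \IFl(w, n; V) \to \LGr(n, V)$, which realizes the flag variety as the relative classical Grassmannian $\Gr_{\LGr(n, V)}(w, \CU)$, with $\CW\subset \CU$ the universal rank $w$ subbundle and $\CU/\CW$ its quotient. The constraints $\lambda\in\You_{w, n-w}$ and $\mu\in\You_{n-w, w}$ put us exactly in the range of Kapranov's computation (Lemma~\ref{lm:gr-kap}), which immediately gives
\begin{equation*}
  R^\bullet p_*\!\left(\Sigma^\lambda\CW\otimes\Sigma^\mu(\CU/\CW)^*\right) =
  \begin{cases}
    \CO_{\LGr(n, V)}[-|\lambda|] & \text{if } \lambda = \mu^T, \\
    0 & \text{otherwise.}
  \end{cases}
\end{equation*}

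Finally, I would take global sections on $\LGr(n, V)$; since $H^\bullet(\LGr(n, V), \CO) = \kk$, the Leray spectral sequence (which degenerates) yields the claim. There is essentially no obstacle to this argument: the two pushforwards are both instances of Borel--Bott--Weil lemmas already established in the appendix, and the main point is the observation that $\IFl(w, n; V)$ admits both a relative Lagrangian Grassmannian structure over $\IGr(w, V)$ and a relative classical Grassmannian structure over $\LGr(n, V)$. The slight subtlety is bookkeeping the domains of validity of the two pushforward lemmas, but the hypotheses $\lambda\in \You_{w, n-w}$ and $\mu\in \You_{n-w, w}$ are exactly what is needed.
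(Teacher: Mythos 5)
Your proof is correct and follows the same two-step pushforward strategy as the paper: express $\SpS{(\PerpQ{\CW})}{\mu}$ as $q_*\Sigma^\mu(\CU/\CW)^*$ via Lemma~\ref{lm:lgr-0}, move to $\IFl(w,n;V)$ by the projection formula, push down along $p$ with Kapranov's Lemma~\ref{lm:gr-kap}, and conclude with $H^\bullet(\LGr(n,V),\CO)=\kk$ (which the paper justifies by Kodaira vanishing). No gaps.
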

\begin{proof}
  Consider the commutative diagram
  \begin{equation*}
    \begin{tikzcd}
      & \IFl(w, n; V) \arrow[ld, "p" swap] \arrow[rd, "q"] & \\
      \LGr(n, V) & & \IGr(w, V).
    \end{tikzcd}
  \end{equation*}
  From Lemma~\ref{lm:lgr-0} and the projection formula we have
  \begin{align*}
    H^\bullet(\IGr(w, V), \Sigma^\lambda\CW\otimes\SpS{(\PerpQ{\CW})}{\mu})
    & \simeq  H^\bullet(\IGr(w, V), \Sigma^\lambda\CW\otimes q_*\Sigma^\mu(\CU/\CW)^*) \\
    & \simeq  H^\bullet(\IFl(w, n; V), \Sigma^\lambda\CW\otimes \Sigma^\mu(\CU/\CW)^*) \\
    & \simeq  H^\bullet(\LGr(n, V), p_*(\Sigma^\lambda\CW\otimes \Sigma^\mu(\CU/\CW)^*)
  \end{align*}
  Since $\IFl(w, n; V)$ is isomorphic to the relative Grassmannian $\Gr(w, \CU)$,
  the result now follows from Lemma~\ref{lm:gr-kap} and Kodaira vanishing.
\end{proof}

For convenience let us put $h=n+1-w$.

\begin{lemma}\label{lm:q-neg}
  Consider the projection $q:\IFl(w, n; V)\to \IGr(w, V)$, and let
  $\alpha$ be a Young diagram such that $\alpha_1\leq h$. Then
  either $q_*\Sigma^\alpha(\CU/\CH) \simeq 0$ or $q_*\Sigma^\alpha(\CU/\CH) \simeq \CO[t]$
  for some $t\in\ZZ$.
\end{lemma}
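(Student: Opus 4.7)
The plan is to apply the relative Lagrangian Borel--Bott--Weil theorem (Proposition~\ref{prop:bbwrellg}) to the projection $q: \IFl(w, n; V) \to \IGr(w, V)$. First I would identify $\IFl(w, n; V)$ with the relative Lagrangian Grassmannian $\LGr_{\IGr(w, V)}(h - 1, \PerpQ{\CW})$, under which $\CU/\CW$ becomes the relative tautological Lagrangian subbundle of rank $h - 1 = n - w$ (the symplectic form on $\PerpQ{\CW}$ is the one descended from $V$). The edge case $h = 1$ is trivial, since then $q$ is an isomorphism and $\CU/\CW = 0$; so I would assume $h \geq 2$ and, without loss of generality, $\alpha \in \YD_{h-1}$ (otherwise $\Sigma^\alpha(\CU/\CW) = 0$).

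Rewriting $\Sigma^\alpha(\CU/\CW) \simeq \Sigma^{-\alpha}(\CU/\CW)^*$ and applying Proposition~\ref{prop:bbwrellg} with weight $\lambda = -\alpha = (-\alpha_{h-1}, \ldots, -\alpha_1) \in \You_{h-1}$ and $\rho = (h-1, h-2, \ldots, 1)$, the computation reduces to analyzing regularity of
\[
  \rho + \lambda = (h - 1 - \alpha_{h-1},\ h - 2 - \alpha_{h-2},\ \ldots,\ 1 - \alpha_1).
\]
If this sequence fails to be regular, then $R^\bullet q_*\Sigma^\alpha(\CU/\CW) = 0$ and we are done; otherwise, the direct image equals $\SpS{\PerpQ{\CW}}{w \cdot \lambda}[-\ell(w)]$ for the unique Weyl group element $w$ rendering $w(\rho + \lambda)$ positive and strictly decreasing.

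The key (and essentially the only nontrivial) observation is a pigeonhole argument showing that whenever $\rho + \lambda$ \emph{is} regular, one automatically has $w \cdot \lambda = 0$. Indeed, the hypothesis $\alpha_1 \leq h$ forces $\alpha_j \in \{0, 1, \ldots, h\}$ for every $j \in \{1, \ldots, h - 1\}$, and therefore
\[
  |j - \alpha_j| \leq \max(j,\ h - j) \leq h - 1.
\]
Thus the $h - 1$ absolute values of the entries of $\rho + \lambda$ all lie in $\{0, 1, \ldots, h - 1\}$; being positive and pairwise distinct by regularity, they must coincide with $\{1, 2, \ldots, h - 1\}$, which is precisely the set of absolute values of the entries of $\rho$. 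This forces $w(\rho + \lambda) = \rho$, equivalently $w \cdot \lambda = 0$, and Proposition~\ref{prop:bbwrellg} yields
\[
  R^\bullet q_*\Sigma^\alpha(\CU/\CW) \simeq \SpS{\PerpQ{\CW}}{0}[-\ell(w)] \simeq \CO[-\ell(w)],
\]
which is of the required form $\CO[t]$. I do not foresee any real obstacle beyond verifying the pigeonhole inequality carefully, which is a direct consequence of the hypothesis $\alpha_1 \leq h$ together with $1 \leq j \leq h-1$.
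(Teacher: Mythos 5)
Your proof is correct and follows essentially the same route as the paper: identify $\IFl(w,n;V)$ with the relative Lagrangian Grassmannian $\LGr(h-1,\PerpQ{\CW})$ over $\IGr(w,V)$, apply the relative Lagrangian Borel--Bott--Weil theorem to the weight $\rho-\alpha$, and observe by pigeonhole that its $h-1$ entries have absolute values at most $h-1$, so regularity forces $w\cdot(-\alpha)=0$ and the pushforward is $\CO$ up to shift. Your write-up merely spells out the edge cases (the $h=1$ case and diagrams with more than $h-1$ rows) that the paper leaves implicit.
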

\begin{proof}
  Remark that $\IFl(w, n; V)$ is the relative Lagrangian Grassmannian
  $\LGr(h-1, \PerpQ{\CW})$.
  According to the Borel--Bott--Weil theorem, we need to study the weight
  \begin{equation}\label{eq:q-neg}
    \rho-\alpha = (h-1-\alpha_{h-1},\, n-2-\alpha_{h-2},\, \ldots,\, 1-\alpha_1).
  \end{equation}
  Since all the absolute values of the elements of~\eqref{eq:q-neg} are at most $h-1$,
  and there are exactly $h-1$ terms, the result follows from pigeonhole principle.
\end{proof}

The following proposition is the most technical (and crucial).
Its proof is very similar to the core computation done
in~\cite[Proposition~5.3]{Fon-LD}. We use the notation introduced in Section~\ref{ssec:comb}.
For simplicity we put $\lambda^{(0)}=\lambda$ and $\nu_0=0$.

\begin{proposition}\label{prop:main}
  Let $\lambda\in\You_{h, w}$ be such that $\lambda_1=w$. Then for all $\mu\in\You_{w, h}$
  \begin{equation*}
    H^\bullet(\IGr(w, V),\, \Sigma^{\mu(-1)}\CW\otimes \SpS{(\PerpQ{\CW})}{\bar{\lambda}}) = 
    \begin{cases}
      V^{[\nu_i]}[-|\lambda^{(i)}|+(w-i)] & \text{if } \mu^T=\lambda^{(i)}, i\in\{0,\ldots,w\},\\
      0 & \text{otherwise}.
    \end{cases}
  \end{equation*}
\end{proposition}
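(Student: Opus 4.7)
The approach is via the isotropic Borel--Bott--Weil theorem (Proposition~\ref{prop:bbwiso}). Using the duality $\Sigma^{\mu(-1)}\CW\simeq\Sigma^{-\mu(-1)}\CW^{*}$ with $-\mu(-1)=(1-\mu_w,\ldots,1-\mu_1)\in\You_w$, the cohomology is governed by the weight
\[
  \tau\ =\ \rho+(-\mu(-1),\,\bar\lambda)\in\ZZ^n,\qquad \rho=(n,n-1,\ldots,1).
\]
Let $l^\mu_1<\cdots<l^\mu_w$ and $m_1<\cdots<m_h$ denote the zero-positions of the binary encodings of $\mu$ and $\lambda$. A direct computation gives $\tau_s=(n+2)-l^\mu_s$ for $s=1,\ldots,w$ and $\tau_{w+k}=m_{h-k}$ for $k=1,\ldots,h-1$. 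Since the hypothesis $\lambda_1=w$ forces $m_h=n+1$, every entry of $\tau$ is strictly positive, and each of the two blocks is strictly decreasing. The two correction terms in the length formula of Proposition~\ref{prop:bbwiso} therefore vanish, and regularity of $\tau$ is equivalent to $A\cap B=\emptyset$, where $A=\{(n+2)-l^\mu_j\}$ and $B=\{m_1,\ldots,m_{h-1}\}$; since $|A|+|B|=n$, this forces $A\cup B$ to be the complement in $\{1,\ldots,n+1\}$ of a single element.

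The next step is to match the regularity condition with $\mu^T=\lambda^{(i)}$. Writing $\sigma^\mu$ and $\sigma^\lambda$ for the full binary sequences, the standard identity $\sigma^{\mu^T}=1-\sigma^\mu_{\mathrm{rev}}$ identifies $A$ with the set of one-positions of $\sigma^{\mu^T}$, and $A\cap B=\emptyset$ becomes the statement that $\sigma^{\mu^T}$ differs from $\sigma^\lambda$ either nowhere (the case $\mu^T=\lambda$, $i=0$) or at exactly two positions: $n+1$ and some $i_0\in\{1,\ldots,n\}$ with $\sigma^\lambda(i_0)=1$. Comparing with the definition in Section~\ref{ssec:comb} shows that these $w+1$ possibilities correspond bijectively to the diagrams $\lambda^{(0)},\lambda^{(1)},\ldots,\lambda^{(w)}$, with $i_0$ being the $i$-th one-position of $\sigma^\lambda$ read from the right.

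It remains to match the Borel--Bott--Weil output. The $(1,0)$-inversion formula $|\lambda|=|\{s<t:\sigma^\lambda(s)=1,\,\sigma^\lambda(t)=0\}|$ applied to both $\lambda$ and $\lambda^{(i)}$ yields $|\lambda|-|\lambda^{(i)}|=n+1-i_0$ (with the convention $i_0=n+1$ for $i=0$), so $\nu_i=n+1-i_0$ and the single element missing from $A\cup B$ equals $n+1-\nu_i$. Consequently, the sorted absolute values of $\tau$ equal $\rho+(1^{\nu_i},0^{n-\nu_i})$, the dominant weight is $(1^{\nu_i},0^{n-\nu_i})$, and $\SpS{V}{(1^{\nu_i},0^{n-\nu_i})}=V^{[\nu_i]}$. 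Because both blocks of $\tau$ are strictly decreasing, the length $\ell(w)$ equals the number of pairs $(a,b)\in A\times B$ with $a<b$; a short inclusion--exclusion that isolates the swapped positions $i_0$ and $n+1$ inside the count of boxes of $\lambda$ reduces this to $|\lambda^{(i)}|-(w-i)$. This last piece of bookkeeping, modelled on the proof of~\cite[Proposition~5.3]{Fon-LD}, is where the main technical effort of the argument lies.
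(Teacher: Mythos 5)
Your proposal is correct and follows essentially the same route as the paper: both apply the isotropic Borel--Bott--Weil theorem to the weight $\rho+(-\mu(-1),\bar\lambda)$, observe that all entries are positive so regularity reduces to distinctness, identify the $w+1$ regular cases with $\mu^T=\lambda^{(i)}$ (the single missing value $t$ resp.\ $i_0$ playing the same role), and then count inversions to get the shift $|\lambda^{(i)}|-(w-i)$. The only difference is bookkeeping -- you encode everything via binary sequences and the $(1,0)$-inversion formula where the paper manipulates $\beta=\bar\lambda^T$ directly using Kapranov's complement lemma -- and I checked that your asserted combinatorial identifications (the bijection with the $\lambda^{(i)}$ and the final inversion count) do work out.
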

\begin{proof}
  Put $\alpha=\lambda^T$. Let us first point out that $(\lambda^{(i)})^T$ is a little
  easier to describe than $\lambda^{(i)}$ itself:
  \begin{equation}\label{eq:lit}
    (\lambda^{(i)})^T = (\alpha_1,\,\alpha_2,\,\ldots,\,\alpha_{w-i},\,
    \alpha_{w-i+2}-1,\,\ldots,\,\alpha_w-1,\, 0).
  \end{equation}
  In particular, $\nu_i=|\lambda|-|\lambda^{(i)}|=\alpha_{w-i+1}+(i-1)$.

  According to the Borel--Bott--Weil theorem, we need to look at the weight
  \begin{equation}\label{eq:main}
    \rho+(-\mu(-1), \bar{\lambda})=
    (n-\mu_w+1,\, n-1-\mu_{w-1}+1,\,\ldots,\, h-\mu_1+1,\,
     h-1+\lambda_2,\,\ldots,\,2+\lambda_{h-1},\,1+\lambda_h).
  \end{equation}
  Since, $0\leq \mu_i\leq h$, and $0\leq \lambda_j\leq w$, all the terms of~\eqref{eq:main}
  are positive, and their absolute values belong to the set $\{n+1,n,\ldots,1\}$.
  Thus, the weight~\eqref{eq:main} is regular if an only if all its terms are distinct.
  If it is regular, $\ell(w)$ equals the number of inversions in~\eqref{eq:main}

  Assume it is regular. Then the set of its terms must coincide with
  $\{n+1,n,\ldots,1\}\setminus\{t\}$ for some $1\leq t\leq n+1$. In particular,
  $t$ must belong to $\{n+1,n,\ldots,1\}\setminus\{h-1+\lambda_2,\ldots,2+\lambda_{h-1},1+\lambda_h\}$.
  Kapranov showed in~\cite[Lemma~3.2]{Kap} that
  \[
    \{n,\ldots,1\}\setminus\{h-1+\lambda_2,\ldots,2+\lambda_{h-1},1+\lambda_h\} =
    \{n-\beta_w, n-1-\beta_{w-1},\ldots,h-\beta_1\},
  \]
  where $\beta=\bar{\lambda}^T$. Thus, $t\in \{n+1, n-\beta_w, n-1-\beta_{w-1},\ldots,h-\beta_1\}$.

  Assume first that $t=n+1$. Since the first $w$ terms of the sequence~\eqref{eq:main} are
  strictly decreasing, we conclude that $n+2-i-\mu_{w+1-i} = n+1-i-\beta_{w+1-i}$ for
  $i=1,\ldots, w$. In other words, $\mu = \beta(1)$. Since
  $\bar{\lambda}^T=\lambda^T(-1)$ (here we use the condition $\lambda_1=w$),
  we conclude that $\mu=\lambda^T$. Again, Kapranov showed that the number of inversions
  in such case equals $|\bar{\lambda}|=|\lambda|-w$.

  Assume now that $t=n+i-\beta_{w-i}$. Since $\lambda_2\leq w$, the only term which
  can equal $n+1$ is the first one. Thus, $\mu_w=0$.
  Again, since the first $w$ terms of the sequence~\eqref{eq:main} are
  strictly decreasing, we conclude that
  \begin{equation*}
    n+1-j-\mu_{w-j} = n+1-j-\beta_{w+1-j},\quad \text{for }j=1,\ldots,i-1,
  \end{equation*}
  and 
  \begin{equation*}
    n+1-j-\mu_{w-j} = n-j-\beta_{w-j},\quad \text{for }j=i,\ldots,w-1,
  \end{equation*}
  We conclude that
  \begin{equation*}
    \mu = (\beta_1+1,\, \beta_2+1,\,\ldots,\, \beta_{w-i}+1,\, \beta_{w-i+2},\, \ldots,\, \beta_w, 0).
  \end{equation*}
  Since $\beta(1)=\lambda^T$, we see from~\eqref{eq:lit} that $\mu^T=\lambda^{(i)}$.
  Moreover, once put in decreasing order the sequence~\eqref{eq:main} becomes
  \((n+1, n, \ldots, n-i+2+\beta_{w-i+1}, n-i+\beta_{w-i+1}, \ldots, 1)\). Thus,
  $w\cdot (\mu,\bar{\lambda}) = (\nu_i)^T$.

  It remains to compute the number of inversions. It obviously equals the number of inversions
  when $\mu$ equals $\beta$ minus the number of inversions involving $n-i+1-\beta_{w-i+1}$.
  Out of the set $\{n-i-\beta_{w-i+1}, \ldots, 1\}$ exactly $w-i$ elements are coming
  from the~terms $n+1-j-\mu_{w-j}$ for $j=i,\ldots,w-1$. Thus, $h-1-\beta_{w-i+1}$ are
  taken by the elements of the~form $h-j+\lambda_{j+1}$ for $j=1,\ldots,h-1$.
  The remaining $\beta_{w-i+1}=\alpha_{w-i+1}-1$ provide the disappearing inversions.
  We conclude that the total number of inversions equals
  \begin{equation*}
    |\bar{\lambda}|-\beta_{w-i+1} = |\lambda|-(w-i)-\nu_i = |\lambda^{(i)}|-(w-i).
  \end{equation*}
\end{proof}

\bibliographystyle{alpha}
\bibliography{lgr}

\end{document}